\title{The Soergel category and the redotted Webster algebra}
\author{Mikhail Khovanov}
\address{Department of Mathematics \\ Columbia University \\ New York, United States}
\email{khovanov@math.columbia.edu}
\author{Joshua Sussan}
\address{Department of Mathematics\\ CUNY Medgar Evers \\ New York, United States}
\email{jsussan@mec.cuny.edu}
\date{May 9, 2016}
\newtheorem{prop}{Proposition}
\newtheorem{ex}{Example}
\newtheorem{theorem}{Theorem}
\newtheorem{corollary}{Corollary} 
\newtheorem{lemma}{Lemma}
\newtheorem{remark}{Remark}
\newtheorem*{thm}{Theorem}
\newcommand{\oplusop}[1]{{\mathop{\oplus}\limits_{#1}}}
 \newcommand{\oplusoop}[2]{{\mathop{\oplus}\limits_{#1}^{#2}}}
\begin{document} 

\maketitle
\baselineskip 14pt

\def\JS#1{\textcolor[rgb]{0.40,0.00,0.90}{[JS: #1]}}% 
\def\R{\mathfrak{R}}
\def\r{\mathfrak{r}}
\def\b{\mathfrak{b}}
\def\Q{\mathbb Q}
\def\Z{\mathbb Z}
\def\N{\mathbb N} 
\def\C{\mathbb C}
\def\F{\mathbb F}
\def\{{\lbrace}
\def\}{\rbrace}
\def\o{\otimes}
\def\lra{\longrightarrow}
\def\Hom{\mathrm{Hom}}
\def\End{\mathrm{End}}
\def\HH{\mathrm{HH}}
\def\HOM{\mathrm{HOM}}
\def\RHom{\mathrm{RHom}}
\def\Id{\mathrm{Id}}
\def\mc{\mathcal}
\def\mf{\mathfrak} 
\def\Ext{\mathrm{Ext}}
\def\Ind{\mathrm{Ind}}
\def\Res{\mathrm{Res}}
\def\soc{\mathrm{soc}}
\def\hd{\mathrm{hd}}
\def\Seq{\mathrm{Seq}}
\def\gmod{\mathrm{gmod}}
\def\-gmod{\mathrm{-gmod}}
\def\gpmod{\mathrm{-gpmod}}

\def\gdim{\mathrm{gdim}}
\def\shuffle{\,\raise 1pt\hbox{$\scriptscriptstyle\cup{\mskip
               -4mu}\cup$}\,}
\newcommand{\define}{\stackrel{\mbox{\scriptsize{def}}}{=}}

%%%%%%%%%%%%%%%%%%%%%%%%%%%
%%
%% this is my usual drawing command, for eps files in subdirectory fig
%%
%%\def\drawing#1{\begin{center}\epsfig{file=fig/#1}\end{center}}

%  this is for eps files in the same directory as the latex file. 
%
\def\drawing#1{\begin{center}\epsfig{file=#1}\end{center}}

 \def\yesnocases#1#2#3#4{\left\{
\begin{array}{ll} #1 & #2 \\ #3 & #4
\end{array} \right. }

\newcommand{\LOT}{H^-} 

\begin{abstract} 
We describe a collection of graded rings which surject onto Webster rings for sl(2) and which should be related to certain categories of singular Soergel bimodules.  In the first non-trivial case, we construct a categorical braid group action which categorifies the Burau representation.
\end{abstract}

\tableofcontents 

%%%%%%%%%%%%%%%%%%%%%%%%%%%%
%%\psfrag{X15}{} 
%%%%%%%%%%%%%%%%%%%%%%%%%%%%%%%%%%%%%%

%%%%%%%%%%%%%%%%%%%%%%%%%%%
\section{Introduction}

%%%%%%
%\begin{equation}
%\begin{tikzpicture}
%\draw (3.5,-.5) -- (3.5,-1.5)[red][line width=5pt];
%\end{tikzpicture}
%\end{equation}

%%%%%%%%

For a finite Coxeter system $(W,S)$, Soergel introduced a category of special bimodules $ \mathcal{R}$, over polynomials rings, now known as Soergel bimodules ~\cite{Soe}.  
He parameterized the indecomposable objects and showed that $\mathcal{R}$ categorifies the Hecke algebra for $(W,S)$. 
We will only consider the case when $W=S_n$ and $S$ is the set of simple transpositions.
This category has a graphical description given in ~\cite{EK}.  

The category of singular Soergel bimodules ${}^I \mathcal{R}^J$ (bimodules over certain invariant subrings) is not as well studied.  Williamson ~\cite{Wi} parameterized the indecomposable objects of ${}^I \mathcal{R}^J$.  The special case of $I=\emptyset$ and $J=\{1,\ldots,k-1,k+1,\ldots,n-1 \}$ is of interest to us because it should categorify a weight space of a tensor product of standard representations of quantum $\mathfrak{sl}_2$.
In a future work we hope to endow this category with a p-DG structure and show its relationship to categorification of small quantum groups.

Here we suggest a graphical approach to singular Soergel bimodule categories.  We prove that our diagrammatics in a further specialization does describe the category of singular Soergel bimodules ${}^{\emptyset} \mathcal{R}^{\{2,\ldots,n-1\}}$.

In Section ~\ref{sectionW(n,k)} we define rings $W(n,k)$ algebraically and graphically.  We expect that the category of graded finitely-generated projective $W(n,k)$-modules is equivalent to the category of singular Soergel bimodules $  {}^{\emptyset} \mathcal{R}^{\hat{k}}= {}^{\emptyset} \mathcal{R}^{\{1,\ldots, k-1,k+1,\ldots,n-1     \}}$.
In Section ~\ref{sectionW(n,1)} we specialize to the rings $W(n,1)$ and compute various features of these rings.  
We define the category of singular Soergel bimodules in Section ~\ref{sectionsingsoerg}  and explicitly construct the indecomposable objects for our special case $k=1$.
The diagrammatic Hecke category is reviewed in Section ~\ref{sectionhecke}.
In Section ~\ref{sectionequivalence} we prove one of our main results.
\begin{thm}
The category of finitely generated graded projective modules
$W(n,1) \gpmod$ is equivalent to the category of singular Soergel bimodules $  {}^{\emptyset} \mathcal{R}^{\hat{1}}$. 
\end{thm}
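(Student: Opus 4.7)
The natural strategy is Morita equivalence: realize $W(n,1)$ as the endomorphism ring (up to opposite) of a projective generator in ${}^{\emptyset}\mc{R}^{\hat{1}}$. Since the indecomposable singular Soergel bimodules for this parabolic pair have been explicitly constructed in Section~\ref{sectionsingsoerg}, let $B = \bigoplus_{\underline{w}} B_{\underline{w}}$ be the direct sum of one representative of each isomorphism class of indecomposables (without grading shifts). Because every object of ${}^{\emptyset}\mc{R}^{\hat{1}}$ is a direct sum of shifts of these $B_{\underline{w}}$, the bimodule $B$ is a projective generator, and standard Morita theory gives an equivalence
\[
\Hom_{{}^{\emptyset}\mc{R}^{\hat{1}}}(B,-) \colon {}^{\emptyset}\mc{R}^{\hat{1}} \longrightarrow \End(B)^{\mathrm{op}}\gpmod.
\]
The theorem then reduces to an isomorphism of graded algebras $W(n,1) \simeq \End(B)^{\mathrm{op}}$.

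The first step is to construct a graded homomorphism $\phi \colon W(n,1) \to \End(B)^{\mathrm{op}}$. The algebra $W(n,1)$ decomposes as $\bigoplus_{\underline{w},\underline{w}'} e_{\underline{w}'} W(n,1) e_{\underline{w}}$, with the idempotents $e_{\underline{w}}$ labeled exactly by the indecomposables indexing the summands of $B$. I would send each idempotent $e_{\underline{w}}$ to the projection onto $B_{\underline{w}}$, and each diagrammatic generator (strand, dot, cup/cap, and the additional ``redotted'' generators) to an explicit bimodule map between the appropriate summands. Checking each defining relation of $W(n,1)$ is then a local bimodule calculation; the graphical presentation has been designed to make these checks manageable.

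The second step is to show $\phi$ is bijective. I would establish a spanning set for each graded piece $e_{\underline{w}'} W(n,1) e_{\underline{w}}$ using a straightening argument on diagrams from Section~\ref{sectionW(n,1)}, and compute its Poincaré series. On the other side, I would compute the graded dimension of $\Hom(B_{\underline{w}}, B_{\underline{w}'})$ via Soergel's Hom formula specialized to singular bimodules, or by direct computation exploiting the explicit tensor-product description of $B_{\underline{w}}$. If the two Poincaré series match, then since surjectivity of $\phi$ follows from checking that the chosen generators already generate all bimodule maps between summands of $B$, comparing dimensions forces $\phi$ to be an isomorphism.

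The main obstacle is the dimension match. On the $W(n,1)$ side it requires showing that the natural diagrammatic spanning set is a basis, i.e.\ that no hidden relations collapse it further; on the Soergel side it requires actually computing $\Hom$ spaces between iterated tensor products of invariant subrings, and keeping track of how the dot generators act. The translation between the two descriptions — matching each diagrammatic generator with a specific singular bimodule map and verifying that the ``redotted'' structure on $W(n,1)$ precisely captures the action of polynomial generators inside $\End(B)$ — is where the real work of the proof lies.
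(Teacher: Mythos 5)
Your proposal follows essentially the same route as the paper: Theorem~\ref{theoremendalgebra} constructs an explicit isomorphism $\Phi\colon \End_{{}^{\emptyset}\mathcal{R}^{\hat{1}}}\bigl(\bigoplus_{i=0}^{n-1}P_i\bigr)\to W(n,1)$ by specifying the images of the identities, the maps $\phi_{i\pm1,i}$, and the left-multiplications $\lambda_{x_j,i}$, reads off surjectivity from the fact that the generators of $W(n,1)$ are hit, and gets injectivity by matching graded dimensions using the basis of Proposition~\ref{basisprop} against Williamson's Hom formula. Your argument differs only cosmetically (direction of the homomorphism and making the Morita-theory wrapper explicit), so you have reproduced the paper's strategy.
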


In Section ~\ref{sectiondeformation} we consider an algebra $A_n^!$ which appears in representation theory and symplectic topology.  We compute its Hochschild cohomology and prove the following result.
\begin{thm}
$W(n,1)$ is a deformation of  $A_n^!$.  More precisely, $\Bbbk[x_1, \ldots, x_n]$ is a subalgebra of $W(n,1)$ and if $\mathfrak{m}$ is the ideal of $\Bbbk[x_1, \ldots, x_n]$ of polynomials containing a non-constant term, then there is an isomorphism of algebras
\begin{equation*}
W(n,1)/ \mathfrak{m} \cong A_n^!.
\end{equation*}
\end{thm}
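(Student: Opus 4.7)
The plan is to compare explicit presentations on both sides and show that the natural quotient map is an isomorphism. Since $W(n,1)$ is defined diagrammatically in the preceding sections and $A_n^!$ admits a well-known generators-and-relations description (as the zigzag-type algebra with idempotents $e_1,\dots,e_n$ and arrows $(i|i\pm1|i)$), the proof reduces to a bookkeeping check: identify which diagrammatic generators of $W(n,1)$ correspond to the $x_i$, and verify that killing them produces exactly the relations defining $A_n^!$.

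First I would recall the explicit presentation of $W(n,1)$ established in Section~\ref{sectionW(n,1)}, in particular its list of generators (diagrammatic merges/splits and dots) and the defining local relations. The subalgebra $\Bbbk[x_1,\dots,x_n]\subset W(n,1)$ should be the algebra generated by dots placed on the $n$ strands; one needs to verify that these dots commute and satisfy no nontrivial polynomial relation, which is either immediate from the defining relations or has already been recorded in Section~\ref{sectionW(n,1)} (e.g.\ from a basis theorem for $W(n,1)$). This identifies $\mathfrak{m}$ and makes the quotient $W(n,1)/\mathfrak{m}$ meaningful.

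Next, I would define a candidate isomorphism $\varphi\colon A_n^!\to W(n,1)/\mathfrak{m}$ on generators: the idempotents $e_i$ of $A_n^!$ go to the diagrammatic idempotents of $W(n,1)$ corresponding to the $n$ weight idempotents, and the $(i|i{\pm}1)$ arrows go to the images of the split/merge diagrams modulo dots. One then checks that each defining relation of $A_n^!$ (the idempotent orthogonality, the zigzag loop relations, and the vanishing of length-three paths that do not return) is a consequence of the $W(n,1)$-relations after setting all dots to zero. This is a finite local computation, essentially one relation per diagrammatic relation of $W(n,1)$, so it should be routine once the presentation of $W(n,1)$ from Section~\ref{sectionW(n,1)} is in hand.

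The main obstacle, and where the real content lies, is showing that $\varphi$ is an isomorphism rather than just a surjection. For this I would use the explicit basis of $W(n,1)$ computed in Section~\ref{sectionW(n,1)}: such a basis typically splits as (monomials in $x_1,\dots,x_n$) $\otimes$ (a finite set of ``dotless'' diagrammatic elements indexed by the paths in the $A_n$ quiver). Reducing modulo $\mathfrak{m}$ kills the first factor and leaves precisely the second, whose cardinality matches the graded dimension of $A_n^!$. Matching these bases term by term, together with the surjectivity of $\varphi$, forces $\varphi$ to be an isomorphism and gives the stated deformation statement.
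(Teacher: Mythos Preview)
Your approach is correct and is essentially what the paper does: in Section~\ref{sectiondeformation} it defines $\overline{W}(n,1)$ as the quotient of $W(n,1)$ by the two-sided ideal generated by diagrams with a dot on a red strand (which is exactly the ideal generated by $\mathfrak{m}$ under the $R$-module structure of~\eqref{Rmodulestructure}), and asserts the isomorphism with $A_n^!$ as ``easy to see,'' with the basis of Proposition~\ref{basisprop} supplying the dimension count you describe. One terminological correction: there are no split/merge generators in $W(n,1)$; the arrows $(i|i\pm 1)$ of $A_n^!$ correspond to the red--black \emph{crossings} $\psi_j$, the idempotent $(i)$ corresponds to $e_{i+1}$, and the only relation to verify is $(i|i-1|i)=(i|i+1|i)$, which follows because both double crossings $\psi_i^2 e_{i+1}$ and $\psi_{i+1}^2 e_{i+1}$ reduce to a single black dot once red dots are killed (while $(1|0|1)=0$ is the cyclotomic condition $e_1=0$). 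Note also that the paper goes beyond the bare isomorphism by computing $\HH^2(A_n^!)$ and exhibiting the explicit $2$-cocycles $\mu_i$ that realize $W(n,1)$ as a nontrivial deformation, though this is supplementary to the theorem as stated.
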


There is a well-known action of the braid group on the homotopy category of regular Soergel bimodules ~\cite{Ro2}.  This construction could be extended to a categorification of the HOMFLYPT polynomial ~\cite{Kh2, Ro3}.  In fact, there is a functor from the category of braid cobordisms into the homotopy category of regular Soergel bimodules ~\cite{EK}.
In Section ~\ref{sectionbraid} we adapt this categorical braid group action to the algebra $W(n,1)$.
Our third main result is the following.
\begin{thm}
There are endofunctors $\sigma_i $ and $\sigma_i^{-1}$ of the homotopy category
of graded finitely-generated projective $W(n,1)$-modules, satisfying the following isomorphisms
\begin{enumerate}
\item $ \sigma_i \circ \sigma_i^{-1} \cong \Id \cong \sigma_i^{-1} \circ \sigma_i $,
\item $\sigma_i \circ \sigma_j \cong \sigma_j \circ \sigma_i$ if $|i-j|>1$,
\item $ \sigma_i \circ \sigma_{i+1} \circ \sigma_i \cong \sigma_{i+1} \circ \sigma_i \circ \sigma_{i+1}$.
\end{enumerate}
\end{thm}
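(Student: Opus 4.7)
The plan is to imitate Rouquier's construction of the braid group action on the homotopy category of Soergel bimodules. For each $i$, I would define $\sigma_i$ as the functor of tensoring (over $W(n,1)$) with a two-term complex of $(W(n,1),W(n,1))$-bimodules
\[
T_i = \bigl( B_i \xrightarrow{\epsilon_i} W(n,1) \bigr),
\]
where $B_i$ is the analogue of the $i$-th Bott--Samelson bimodule, read off from the diagrammatic generators of $W(n,1)$ introduced in Section~\ref{sectionW(n,k)}, and $\epsilon_i$ is the natural ``multiplication'' map. The inverse functor $\sigma_i^{-1}$ is given by tensoring with $\bigl( W(n,1) \xrightarrow{\eta_i} B_i \bigr)$, with $W(n,1)$ in the opposite homological position and with the appropriate grading shifts chosen so that the two complexes are inverse up to homotopy.

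The first step is to establish the basic calculus of the $B_i$'s: a decomposition $B_i \otimes B_i \cong B_i\{1\} \oplus B_i\{-1\}$, far commutativity $B_i \otimes B_j \cong B_j \otimes B_i$ for $|i-j|>1$, and a Soergel-type decomposition of the triple tensor products $B_i \otimes B_{i+1} \otimes B_i$ and $B_{i+1} \otimes B_i \otimes B_{i+1}$ sharing a common indecomposable summand $B_{i,i+1,i}$. In the $k=1$ case under consideration, these relations can be read directly from the graphical calculus of $W(n,1)$, or alternatively transported through the equivalence $W(n,1)\gpmod \simeq {}^\emptyset\mathcal{R}^{\hat{1}}$ established in the previous main theorem, once one verifies that the $B_i$ correspond to the singular Bott--Samelson bimodules on the Soergel side.

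Given these inputs, properties (1) and (2) are essentially formal. The invertibility $\sigma_i \circ \sigma_i^{-1} \cong \Id$ reduces to writing the tensor product $T_i \otimes T_i^{-1}$ as a four-term complex involving $B_i \otimes B_i$, identifying the contractible summand coming from $B_i\{1\} \oplus B_i\{-1\}$, and eliminating it by a Gaussian-elimination argument. Far commutativity $\sigma_i \circ \sigma_j \cong \sigma_j \circ \sigma_i$ for $|i-j|>1$ follows term by term from the commutation of $B_i$ with $B_j$ at the level of bimodules.

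The main obstacle is the braid relation (3). Following Rouquier's template one writes $\sigma_i \circ \sigma_{i+1} \circ \sigma_i$ and $\sigma_{i+1} \circ \sigma_i \circ \sigma_{i+1}$ as explicit eight-term complexes whose terms are triple tensor products $B_{i_1} \otimes B_{i_2} \otimes B_{i_3}$, constructs an explicit chain map between them extending the identity on the shared summand $B_{i,i+1,i}$, and then contracts the mapping cone via a sequence of Gaussian eliminations on the contractible pieces identified in the previous step. Choosing the interpolating maps so that every unwanted summand cancels, and tracking the signs and scalar corrections that arise from the extra dots in the redotted setting of $W(n,1)$, is the technically delicate part; this is where the proof genuinely differs from the classical Soergel case and must be verified by an explicit diagrammatic computation.
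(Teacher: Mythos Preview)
Your proposal is a reasonable plan and essentially follows Rouquier's original strategy, but the paper takes a genuinely different route that sidesteps precisely the ``technically delicate part'' you identify at the end.

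Rather than writing out the two eight-term complexes for $\sigma_i\sigma_{i+1}\sigma_i$ and $\sigma_{i+1}\sigma_i\sigma_{i+1}$ and constructing an explicit homotopy equivalence via Gaussian elimination, the paper builds a monoidal functor $F\colon\mathcal{H}\to\mathcal{W}$ from the Elias--Khovanov diagrammatic Hecke category to the category of $W(n,1)$-bimodules generated by the $W_i$ (their analogue of your $B_i$). Concretely, this means defining bimodule maps for each of the Soergel generators (the dots, trivalent vertices, $4$-valent and $6$-valent crossings) and checking all of the diagrammatic Hecke relations. Once $F$ exists, the braid relations and indeed the full braid-cobordism functoriality follow immediately from the Elias--Krasner theorem that Rouquier complexes in $\mathcal{H}$ already enjoy these properties.

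The trade-off is instructive. Your approach requires the bimodule decompositions $W_i\otimes W_i\cong W_i\langle 1\rangle\oplus W_i\langle -1\rangle$ and a Soergel-type splitting of the triple products (the paper only proves the first of these), and then a single intricate cone contraction for relation~(3). The paper instead checks a longer list of individually simpler relations (the Hecke relations), most of which reduce to the classical Soergel case when no black strand is present, and only require genuinely new computation on a handful of generators where the black strand interferes. The payoff of the paper's route is that braid-cobordism functoriality comes for free; the payoff of yours would be self-containment, avoiding the dependence on Elias--Krasner. One caution on your alternative suggestion: transporting bimodule decompositions through the equivalence $W(n,1)\gpmod\simeq{}^{\emptyset}\mathcal{R}^{\hat 1}$ is not immediate, since that equivalence is of module categories, and you would need to separately match the endofunctors $W_i\otimes_W(-)$ with $B_i\otimes_R(-)$.
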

The functors $\sigma_i^{\pm 1}$ are given by tensoring with complexes of certain $W(n,1)$-bimodules.  In order to prove the theorem above we construct a functor from the (diagrammatic) category of Soergel bimodules to a diagrammatic tensor category $\mathcal{B}$ that we introduce in Section ~\ref{sectionbraid}.  The main result of ~\cite{EK} then implies the theorem above.  We expect that the theorem extends to algebras $W(n,k)$ for all $k$.

%We consider two other braid group actions in Section ~\ref{sectionotherbraid}.
%One action comes from twists by simple $W(n,1)$-modules $L_i$.  The other is the standard braid group action from Soergel bimodules but acting on singular Soergel bimodules.  It would be interesting to connect this action with the one constructed in Section ~\ref{sectionbraid}.

\subsection{Acknowledgements}
M.K. was supported by NSF grant DMS-1406065.
J.S. was supported by NSF grant DMS-1407394, PSC-CUNY Award 67144-00 45, and an Alfred P. Sloan Foundation CUNY Junior Faculty Award.

\section{Algebras $W(n,k)$} 
\label{sectionW(n,k)}
\subsection{Conventions}
Throughout we fix an infinite field $\Bbbk$ of characteristic not equal to $2$.

For graded algebras $A$ and $B$ we denote by $A \-gmod$ the category of graded $A$-modules and $(A,B)\-gmod$ the category of graded $(A,B)$-bimodules.
Let $A\gpmod$ be the category of finitely generated graded projective $A$-modules. 

Let $R_n=\Bbbk[x_1, \ldots, x_n]$.  When the context is clear we will often denote $R_n$ simply by $R$.
We will write $E_i(x_1, \ldots, x_n)$ for the $i$th elementary symmetric polynomial in the variables $x_1, \ldots, x_n$.  When there is no chance for confusion we will abbreviate it as $E_i$.  $R_n$ is a graded algebra where each $x_i$ has degree two.

For a graded $A$-module $M$ we denote the subspace in degree $i$ by $M_i$.
Let $M \langle r \rangle$ denote the grading shift of the module $M$ up by $r$.
Explicitly $(M \langle r \rangle)_i=M_{i-r}$.

\begin{remark}
Our grading shift convention is opposite to the ones used in ~\cite{Soe} and ~\cite{Wi} but consistent with the one used in ~\cite{EK}.
\end{remark}

\subsection{The algebra $ W(n,k)$}
\label{algebrarelationsI}
Let $ \textrm{Seq} $ be the set of all sequences $ {\bf i}=(i_1, \ldots, i_{n+k}) $, where $ i_{j} \in \{\b,\r\} $ for each $ j $ and $\r$ appears $n$ times while $\b$ appears $k$ times.
The symmetric group $ S_{n+k} $ acts on $ \textrm{Seq} $ with the simple transposition $s_j$ exchanging the entries in positions $j$ and $j+1$.

Let $ W_{}(n,k) $ be the algebra over $ \Bbbk $ generated by $ y_j $ for $ k=1, \ldots, n+k$,
$ \psi_j $ for $ j=1, \ldots, n+k-1$, and $ e(\bf i) $ where $ {\bf i} \in \textrm{Seq}$, satisfying the relations below.
\begin{enumerate}
\label{algebrarelations}
\item $ e({\bf i}) e({\bf j}) = \delta_{{\bf i}, {\bf j}} e({\bf i}), $
\item $ y_j e({\bf i}) = e({\bf i}) y_j, $
\item $ \psi_k e({\bf i}) = e(s_k{\bf i}) \psi_k, $
\item $ \psi_j e({\bf i}) = 0 $ if $ {\bf i}_j ={\bf i}_{j+1}=\r, $
\item $ \psi_j \psi_l e({\bf i})=\psi_l \psi_j e({\bf i}) $ if $ |j-l|>1, $
\item $ (\psi_j e({\bf i}))(y_l e({\bf i})) = (y_l e(s_j {\bf i})) (\psi_j e({\bf i})) $ if $ |j-l|>1,$
\item $ y_j y_l = y_l y_j, $
\item $ \psi_j y_j e({\bf i}) - y_{j+1} \psi_j e({\bf i}) = \delta_{{\bf i}_{j}, {\bf i}_{j+1}},$ 
\item $ y_j \psi_j e({\bf i}) -  \psi_j y_{j+1} e({\bf i}) = \delta_{{\bf i}_{j}, {\bf i}_{j+1}}, $ 
\item $ \psi_j^2 e({\bf i}) = (1-\delta_{{\bf i}_j, {\bf i}_{j+1}})   ( (-1)^{\delta_{{\bf i}_j,\r}}   y_j +  (-1)^{\delta_{{\bf i}_{j+1},\r}}    y_{j+1}) e({\bf i}), $ 
\item $ (\psi_j \psi_{j+1} \psi_j - \psi_{j+1} \psi_j \psi_{j+1}) e({\bf i}) =(-\delta_{{\bf i}_{j}, \b} \delta_{{\bf i}_{j+1}, \r} \delta_{{\bf i}_{j+2}, \b})e({\bf i}), $
\item \label{cyclotomiccondition} $\delta_{{\bf i}_{1}, \b}   e({\bf i}) =0$. 
\end{enumerate}

Let $\widetilde{W}(n,k)$ be the algebra satisfying all the relations above except the last one.
We say that ${W}(n,k)$ is the cyclotomic quotient of $\widetilde{W}(n,k)$.

This is a $\Z$-graded algebra with the degrees of generators:
\begin{align*}
deg(y_i)&=2,\\
deg(e({\bf i}))&=0, \\
deg(\psi_k e({\bf i})) &=
\begin{cases}
-2 & \text{ if } {\bf i}_k={\bf i}_{k+1} = \b, \\
1 & \text{ if } {\bf i}_k=\b \text{ and } {\bf i}_{k+1} = \r, \\
1 & \text{ if } {\bf i}_k=\r \text{ and } {\bf i}_{k+1} = \b.
\end{cases}
\end{align*}

The space $e({\bf i}) W(n,k) e({\bf j})$ has the structure of an $R_n$-module.
Suppose $\r$ occurs in the sequence ${\bf i}$ in positions $r_1, \ldots, r_n$ with $ r_1 < \cdots < r_n$.  For $ m \in e({\bf i}) W(n,k) e({\bf j})$ define
\begin{equation}
\label{Rmodulestructure}
x_l . m = y_{r_l} m, \hspace{.5in} \text{ for } l=1,\ldots,n.
\end{equation}

\subsection{Graphical presentation of $ W(n,k)$}
\label{graphicalpresentation}
There is a graphical presentation of $ W(n,k) $ very similar to the algebras introduced in ~\cite{KL1, Ro1} and ~\cite{W1}.  
We consider collections of smooth arcs in the plane connecting $ n $ red points and $k$ black points on one horizontal line with $n$ red points and $k$ black points on another horizontal line.
The arcs are colored in a manner consistent with their boundary points, and the black arcs are dashed.
Arcs are assumed to have no critical points (in other words no cups or caps).
Arcs are allowed to intersect (as long as they are both not solid red), but no triple intersections are allowed.
Arcs can carry dots.  
Two diagrams that are related by an isotopy that does not change the combinatorial types of the diagrams or the relative position of crossings are taken to be equal. 
The elements of the vector space $ W(n,k) $ are formal linear combinations of these diagrams modulo the local relations given below.
We give $ W(n,k) $ the structure of an algebra by concatenating diagrams vertically as long as the colors of the endpoints match.  If they do not, the product of two diagrams is taken to be zero.

Dots on strands correspond to generators $y_l$ given earlier.  Strands which cross correspond to generators $\psi_l$.  The action $x_l .m$ described above should be interpreted as placing a dot on the $l$-th red strand of a diagram counting from left to right (ignoring the black strands).

In ~\eqref{farawaycommute}, the rectangles labeled by $X$ and $Y$ represent the generating diagrams given in ~\eqref{degrees}.  This relation then means that far away generators commute.
The equation ~\eqref{cyclotomic} is the cyclotomic condition.

\begin{equation}
\label{farawaycommute}
\begin{tikzpicture}
\draw (.5,.25) -- (.5,0)[thick];
\draw (.5,.75) -- (.5,2)[thick];

\draw (0,.25) -- (0,.75)[thick];
\draw (1,.25) -- (1,.75)[thick];
\draw (0,.25) -- (1,.25)[thick];
\draw (0,.75) -- (1,.75)[thick];

\draw (1.5,1) node{$\cdots$};

\draw [shift={+(2,1)}](0,.25) -- (0,.75)[thick];
\draw [shift={+(2,1)}](1,.25) -- (1,.75)[thick];
\draw [shift={+(2,1)}](0,.25) -- (1,.25)[thick];
\draw [shift={+(2,1)}](0,.75) -- (1,.75)[thick];

\draw (2.5,1.25) -- (2.5,0)[thick];
\draw (2.5,1.75) -- (2.5,2)[thick];

\draw (7.5,.25) -- (7.5,0)[thick];
\draw (7.5,.75) -- (7.5,2)[thick];

\draw (.5,.5) node{$X$};
\draw (2.5,1.5) node{$Y$};

\draw (4,1) node{$=$};

\draw [shift={+(5,0)}][shift={+(0,1)}](0,.25) -- (0,.75)[thick];
\draw [shift={+(5,0)}][shift={+(0,1)}](1,.25) -- (1,.75)[thick];
\draw [shift={+(5,0)}][shift={+(0,1)}](0,.25) -- (1,.25)[thick];
\draw [shift={+(5,0)}][shift={+(0,1)}](0,.75) -- (1,.75)[thick];

\draw (5.5,1.25) -- (5.5,0)[thick];
\draw (5.5,1.75) -- (5.5,2)[thick];

\draw [shift={+(5,0)}](1.5,1) node{$\cdots$};

\draw [shift={+(5,0)}][shift={+(2,0)}](0,.25) -- (0,.75)[thick];
\draw [shift={+(5,0)}][shift={+(2,0)}](1,.25) -- (1,.75)[thick];
\draw [shift={+(5,0)}][shift={+(2,0)}](0,.25) -- (1,.25)[thick];
\draw [shift={+(5,0)}][shift={+(2,0)}](0,.75) -- (1,.75)[thick];

\draw (5.5,1.5) node{$X$};
\draw (7.5,.5) node{$Y$};

\end{tikzpicture}
\end{equation}

\begin{equation}
\label{crossdot1}
\begin{tikzpicture}[>=stealth]
\draw (0,0) -- (1,1)[][thick][dashed];
\draw (1,0) -- (0,1)[red][thick];
\filldraw [black] (.25,.25) circle (2pt);
\draw (1.5,.5) node{=};
\draw (2,0) -- (3,1)[][thick][dashed];
\draw (3,0) -- (2,1)[red][thick];
\filldraw [black] (2.75, .75) circle (2pt);

\draw [shift={(6,0)}](0,0) -- (1,1)[][thick][dashed];
\draw [shift={(6,0)}](1,0) -- (0,1)[red][thick];
\filldraw [red] [shift={(6,0)}](.25,.75) circle (2pt);
\draw [shift={(6,0)}](1.5,.5) node{=};
\draw [shift={(6,0)}](2,0) -- (3,1)[][thick][dashed];
\draw [shift={(6,0)}](3,0) -- (2,1)[red][thick];
\filldraw [red] [shift={(6,0)}](2.75, .25) circle (2pt);

\end{tikzpicture}
\end{equation}

\begin{equation}
\label{crossdot2}
\begin{tikzpicture}[>=stealth]
\draw (0,0) -- (1,1)[red][thick];
\draw (1,0) -- (0,1)[][thick][dashed];
\draw (1.5,.5) node{=};
\draw (2,0) -- (3,1)[red][thick];
\draw (3,0) -- (2,1)[][thick][dashed];
\filldraw[black](.25,.75) circle (2pt);
\filldraw[black](2.75,.25) circle (2pt);

\draw [shift={(6,0)}](0,0) -- (1,1)[red][thick];
\draw [shift={(6,0)}](1,0) -- (0,1)[][thick][dashed];
\draw [shift={(6,0)}](1.5,.5) node{=};
\draw [shift={(6,0)}](2,0) -- (3,1)[red][thick];
\draw [shift={(6,0)}](3,0) -- (2,1)[][thick][dashed];
\filldraw[red] [shift={(6,0)}](.25,.25) circle (2pt);
\filldraw[red] [shift={(6,0)}](2.75,.75) circle (2pt);

\end{tikzpicture}
\end{equation}

\begin{equation}
\label{mm+1}
\begin{tikzpicture}
\draw (0,0) .. controls (1,1) .. (0,2)[red][thick];
\draw (1,0) .. controls (0,1) .. (1,2)[][thick][dashed];
\draw (1.5,1) node{$=$};
\draw (2,0) -- (2,2)[red][thick];
\draw (3,0) -- (3,2)[thick][dashed];
\filldraw[black](3,1) circle (2pt);
\draw (3.5,1) node{$-$};
\draw (4,0) -- (4,2)[red][thick];
\draw (5,0) -- (5,2)[thick][dashed];
\filldraw[red](4,1) circle (2pt);
\end{tikzpicture}
\end{equation}

\begin{equation}
\begin{tikzpicture}
\draw (5,0) .. controls (6,1) .. (5,2)[][thick][dashed];
\draw (6,0) .. controls (5,1) .. (6,2)[red][thick];
\draw (6.5,1) node{$=$};
\draw (7,0) -- (7,2)[black][thick][dashed];
\draw (8,0) -- (8,2)[red][thick];
\filldraw[black](7,1) circle (2pt);
\draw (8.5,1) node{$-$};
\draw (9,0) -- (9,2)[black][thick][dashed];
\draw (10,0) -- (10,2)[red][thick];
\filldraw[red](10,1) circle (2pt);
\end{tikzpicture}
\end{equation}

\begin{equation}
\label{nilhecke3}
\begin{tikzpicture}
\draw (0,0) .. controls (1,1) .. (0,2)[black][thick][dashed];
\draw (1,0) .. controls (0,1) .. (1,2)[black][thick][dashed];
\draw (1.5,1) node{$=$};
\draw (2,1) node{$0$};
\end{tikzpicture}
\end{equation}

\begin{equation}
\label{nilhecke2}
\begin{tikzpicture}[>=stealth]
\draw (0,0) -- (1,1)[][thick][dashed];
\draw (1,0) -- (0,1)[][thick][dashed];
\filldraw [black] (.25,.25) circle (2pt);
\draw (1.5,.5) node{-};
\draw (2,0) -- (3,1)[][thick][dashed];
\draw (3,0) -- (2,1)[][thick][dashed];
\filldraw [black] (2.75, .75) circle (2pt);
\draw (3.5,.5) node{=};
\draw (4,0) -- (4,1)[][thick][dashed];
\draw (5,0) -- (5,1)[][thick][dashed];
\draw (5.5,.5) node{=};

\draw [shift={(6,0)}](0,0) -- (1,1)[][thick][dashed];
\draw [shift={(6,0)}](1,0) -- (0,1)[][thick][dashed];
\filldraw [black] [shift={(6,0)}](.25,.75) circle (2pt);
\draw [shift={(6,0)}](1.5,.5) node{-};
\draw [shift={(6,0)}](2,0) -- (3,1)[][thick][dashed];
\draw [shift={(6,0)}](3,0) -- (2,1)[][thick][dashed];
\filldraw [black] [shift={(6,0)}](2.75, .25) circle (2pt);

\end{tikzpicture}
\end{equation}

\begin{equation}
\label{reid3}
\begin{tikzpicture}[>=stealth]
\draw (0,0) -- (2,2)[black][thick][dashed];
\draw (2,0) -- (0,2)[black][thick][dashed];
\draw (1,0) .. controls (0,1) .. (1,2)[black][thick][dashed];
\draw (2.5,1) node {=};
\draw (3,0) -- (5,2)[black][thick][dashed];
\draw (5,0) -- (3,2)[black][thick][dashed];
\draw (4,0) .. controls (5,1) .. (4,2)[black][thick][dashed];
\end{tikzpicture}
\end{equation}

\begin{equation}
\begin{tikzpicture}[>=stealth]
\draw (0,0) -- (2,2)[black][thick][dashed];
\draw (2,0) -- (0,2)[red][thick];
\draw (1,0) .. controls (0,1) .. (1,2)[black][thick][dashed];
\draw (2.5,1) node {=};
\draw (3,0) -- (5,2)[black][thick][dashed];
\draw (5,0) -- (3,2)[red][thick];
\draw (4,0) .. controls (5,1) .. (4,2)[black][thick][dashed];

\draw [shift={+(7,0)}](0,0) -- (2,2)[red][thick];
\draw [shift={+(7,0)}](2,0) -- (0,2)[black][thick][dashed];
\draw [shift={+(7,0)}](1,0) .. controls (0,1) .. (1,2)[black][thick][dashed];
\draw [shift={+(7,0)}](2.5,1) node {=};
\draw [shift={+(7,0)}](3,0) -- (5,2)[red][thick];
\draw [shift={+(7,0)}](5,0) -- (3,2)[black][thick][dashed];
\draw [shift={+(7,0)}](4,0) .. controls (5,1) .. (4,2)[black][thick][dashed];

\end{tikzpicture}
\end{equation}

\begin{equation}
\label{reid3nu}
\begin{tikzpicture}[>=stealth]
\draw (0,0) -- (2,2)[black][thick][dashed];
\draw (2,0) -- (0,2)[black][thick][dashed];
\draw (1,0) .. controls (0,1) .. (1,2)[red][thick];
\draw (2.5,1) node {$-$};
\draw (3,0) -- (5,2)[black][thick][dashed];
\draw (5,0) -- (3,2)[black][thick][dashed];
\draw (4,0) .. controls (5,1) .. (4,2)[red][thick];
\draw (6.25,1) node{= };
\draw (7.05,1) node{$-$ };
\draw (7.5,0) -- (7.5,2)[black][thick][dashed];
\draw (8.5,0) -- (8.5,2)[red][thick];
\draw (9.5,0) -- (9.5,2)[black][thick][dashed];
\end{tikzpicture}
\end{equation}

\begin{equation}
\label{cyclotomic}
\begin{tikzpicture}[>=stealth]
\draw (0,0) -- (0,2)[black][thick][dashed];

\draw (1,1) node{$\cdots$ };
\draw (2,1) node{$= 0$ };
\end{tikzpicture}
\end{equation}
%%%%%%%%%%%%%%%%%%%

\begin{equation}
\label{degrees}
\begin{tikzpicture}
\draw (-1,0) -- (14,0)[][very thick];
\draw (0,.5) node{degree};
\draw (0, -1) node{generator};
\draw (1,1) -- (1,-2)[][very thick];
\draw (1.5,.5) node{$0$};
\draw (1.5,-.5) -- (1.5,-1.5)[red][thick];

\draw (2,1) -- (2, -2)[very thick];
\draw (4.5,-.5) -- (4.5,-1.5)[thick][dashed];
\draw (4,1) -- (4, -2)[very thick];
\draw (5,1) -- (5, -2)[very thick];
\draw (4.5, .5) node{$2$};
\filldraw[black](4.5,-1) circle (2pt);
\draw (6,-1.5) -- (7,-.5)[black][thick][dashed];
\draw (7, -1.5) -- (6, -.5)[black][thick][dashed];

\draw (6.5,.5) node{$-2$};
\draw (-1,1) -- (-1,-2)[very thick];
\draw (8,1) -- (8,-2)[very thick];
\draw (-1,1) -- (14,1)[][very thick];
\draw (-1,-2) -- (14,-2)[][very thick];
\draw (11,1) -- (11,-2)[very thick];
\draw (9,-1.5) -- (10,-.5)[black][thick][dashed];
\draw (10, -1.5) -- (9, -.5)[red][thick];
\draw (9.5,.5) node{$1$};
\draw (11,1) -- (11,-2)[very thick];
\draw (12,-1.5) -- (13,-.5)[red][thick];
\draw (13, -1.5) -- (12, -.5)[black][thick][dashed];
\draw (12.5,.5) node{$1$};
\draw (14,1) -- (14,-2)[very thick];
%\draw (14,1) -- (14,-2)[][thick];
\draw (3,1) -- (3,-2)[black][thick];
\draw (2.5,.5) node{$0$};
\draw (2.5,-.5) -- (2.5,-1.5)[black][thick][dashed];
\draw (3.5,.5) node{$2$};
\draw (3.5,-.5) -- (3.5,-1.5)[red][thick];
\filldraw[red](3.5,-1) circle (2pt);

\end{tikzpicture}
\end{equation}

\begin{remark}
There is a surjective map $ {W}(n,k) \rightarrow T^n_{n-2k} $, where $T^n_{n-2k}$ is the algebra defined by Webster in ~\cite[Definition 2.2]{W2},
by setting all diagrams with a dot on a solid red strand to zero in the graphical presentation of ${W}(n,k)$.
We refer to $W(n,k)$ as a redotted Webster algebra for $\mathfrak{sl}_2$.

This implies that the algebra $W(n,k)$ is non-zero for $0 \leq k \leq n$.  In Section ~\ref{sectionW(n,1)} we prove directly that $W(n,1)$ is non-trivial.
\end{remark}

\subsection{The algebra $W^p(n,k)$}
\label{subsectionW^p(n,k)}
For $p=1,\ldots,n-1$ we will define an algebra $W^p(n,k)$ isomorphic to a subalgebra of $W(n,k)$ whose inclusion is non-unital.

Let $ \textrm{Seq}_p $ be the set of all sequences $ {\bf i}=(i_1, \ldots, i_{n+k-1}) $ where $ i_{j} \in \{\b,\r,\R\} $ for each $ j $.  We assume $\r$ appears $n-2$ times while $\b$ appears $k$ times and $\R$ appears just once.  We also assume that the symbol $\r$ appears exactly $p-1$ times before the symbol $\R$ appears.
The symmetric group $ S_{n+k-1} $ acts on $ \textrm{Seq}_p $ with the simple transposition $s_j$ exchanging the entries in positions $j$ and $j+1$.

Let $ W^{p}(n,k) $ be the algebra over $ \Bbbk $ generated by $ y_j $ for $ k=1, \ldots, n+k-1$, along with $E_1$, $E_2$,
$ \psi_j $ for $ j=1, \ldots, n+k-2$, and $ e(\bf i) $ where $ {\bf i} \in \textrm{Seq}_p$, satisfying the relations below.

Informally, $\R$ represents two solid red strands merged into one. $E_1$ and $ E_2$ represent the first and second elementary symmetric functions in the dots on this merged strand.

\begin{enumerate}
\label{algebrarelations2}
\item $ e({\bf i}) e({\bf j}) = \delta_{{\bf i}, {\bf j}} e({\bf i}), $
\item $ y_j e({\bf i}) = e({\bf i}) y_j, $
\item $y_j e({\bf i})=0$ if ${\bf i}_j=\R,$
\item $E_1$ and $E_2$ are central,
\item $ \psi_k e({\bf i}) = e(s_k{\bf i}) \psi_k, $
\item $ \psi_j e({\bf i}) = 0 $ if $ {\bf i}_j , {\bf i}_{j+1} \in \{\r, \R \}, $
\item $ \psi_j \psi_l e({\bf i})=\psi_l \psi_j e({\bf i}) $ if $ |j-l|>1 $
\item $ (\psi_j e({\bf i}))(y_l e({\bf i})) = (y_l e(s_j {\bf i})) (\psi_j e({\bf i})) $ if $ |j-l|>1,$
\item $ y_j y_l = y_l y_j, $
\item $ \psi_j y_j e({\bf i}) - y_{j+1} \psi_j e({\bf i}) = \delta_{{\bf i}_{j}, {\bf i}_{j+1}},$ 
\item $ y_j \psi_j e({\bf i}) -  \psi_j y_{j+1} e({\bf i}) = \delta_{{\bf i}_{j}, {\bf i}_{j+1}}, $ 
\item \begin{equation*}
\psi_j^2 e({\bf i})=
\begin{cases}
0 & \text{ if }  {\bf i}_j = {\bf i}_{j+1}=\b\\
(-y_j+y_{j+1})e({\bf i}) & \text{ if } {\bf i}_j=\r, {\bf i}_{j+1}=\b \\
(y_j-y_{j+1})e({\bf i}) & \text{ if } {\bf i}_j=\b, {\bf i}_{j+1}=\r \\
(E_2-E_1y_{j+1} + y_{j+1}^2)e({\bf i}) & \text{ if } {\bf i}_j=\R, {\bf i}_{j+1}=\b \\
(E_2-y_{j}E_1 + y_{j}^2)e({\bf i}) & \text{ if } {\bf i}_j=\b, {\bf i}_{j+1}=\R, 
\end{cases}
\end{equation*}
%\item $ \psi_j^2 e({\bf i}) = (1-\delta_{{\bf i}_j, {\bf i}_{j+1}})   ( (-1)^{\delta_{{\bf i}_j,r}}   y_j +  (-1)^{\delta_{{\bf i}_{j+1},r}}    y_{j+1}) e({\bf i}) $ 
\item $ (\psi_j \psi_{j+1} \psi_j - \psi_{j+1} \psi_j \psi_{j+1}) e({\bf i}) =(-\delta_{{\bf i}_{j}, \b} \delta_{{\bf i}_{j+1}, \r} \delta_{{\bf i}_{j+2}, \b} 
+ (-y_i +E_1 - y_{i+2})   \delta_{{\bf i}_{j}, \b} \delta_{{\bf i}_{j+1}, \R} \delta_{{\bf i}_{j+2}, \b})e({\bf i}), $
\item \label{cyclotomiccondition} $\delta_{{\bf i}_{1}, \b}   e({\bf i})=0 $. 
\end{enumerate}

This is a graded algebra with the degrees of generators:
\begin{align*}
deg(y_i)&=2, \hspace{.3in} deg(E_1)=2, \hspace{.3in} deg(E_2)=4, \hspace{.3in} deg(e({\bf i}))=0,  \\
%deg(E_1)&=2\\
%deg(E_2)&=4 \\
%deg(e({\bf i}))&=0 \\
deg(\psi_k e({\bf i})) &=
\begin{cases}
-2 & \text{ if } {\bf i}_k={\bf i}_{k+1} = \b, \\
1 & \text{ if } {\bf i}_k=\b \text{ and } {\bf i}_{k+1} = \r, \\
1 & \text{ if } {\bf i}_k=\r \text{ and } {\bf i}_{k+1} = \b, \\
2 & \text{ if } {\bf i}_k=\R \text{ and } {\bf i}_{k+1} = \b, \\
2 & \text{ if } {\bf i}_k=\b \text{ and } {\bf i}_{k+1} = \R.
\end{cases}
\end{align*}

There is a graphical description of $W^p(n,k)$ but in the interest of space we only sketch it for $W^p(n,1)$ in Section ~\ref{specialsubalgebra}.

\begin{remark}
There is an injective homomorphism $\rho_p \colon W^p(n,k) \rightarrow W(n,k)$.  We will prove this later for the special case that $k=1$.
\end{remark}

\begin{remark}
We define $~\widetilde{W}^p(n,k)$ to be the algebra defined above with all of the relations except the cyclotomic relation $\delta_{{\bf i}_{1}, \b}   e({\bf i})=0 $. However we will not make use of this algebra in this paper.
\end{remark}

\section{Properties of ${W}(n,1)$}
\label{sectionW(n,1)}
In this section and throughout the rest of the paper we will specialize the algebras introduced in Section ~\ref{sectionW(n,k)} to the case $k=1$.  

It will be convenient to write  $e_i=e({\bf i}) $ where ${\bf i}$ is the sequence where the only $\b$ occurs in position $i$.  By the last defining relation (the cyclotomic relation) in Section ~\ref{algebrarelationsI}, it follows that $e_1=0$.

\subsection{A representation of ${W}(n,1)$}
\label{repofW(n,1)}
Recall that $R=\Bbbk[x_1, \ldots, x_n]$.  Define the vector space 
$V_{n,i+1}=R[y_i]/((y_i-x_1) \cdots (y_i-x_i)) $ and $ V_n=\oplus_{i=2}^{n+1} V_{n,i}$.

\begin{itemize}

\item We define $e_i$ to act trivially on $V_{n,j}$ unless $i=j$ in which case $e_i$ acts as the identity on $V_{n,i}$.  A dot on the $l$-th red strand of a diagram (with no crossings) acts as multiplication by $x_l$.
A dot on the black strand acts as multiplication by $y_i$.

\item A crossing taking the element $\b$ in the $i$-th position on the bottom to the $(i-1)$-st position on the top maps $1 \in V_{n,i}$ to $ 1 \in V_{n,i-1}$.

\item A crossing taking the element $\b$ in the $i$-th position on the bottom to the $(i+1)$-st position on the top maps $1 \in V_{n,i}$ to $ y_{i}-x_{i} \in V_{n,i+1}$.

\end{itemize}

\begin{prop}
$V_n$ is a representation of $W(n,1)$.
\end{prop}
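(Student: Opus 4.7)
The plan is to verify each of the twelve defining relations (1)--(12) of $W(n,1)$ from Section~\ref{algebrarelationsI} for the proposed action. For convenience I write $Y_i$ for the class of the formal variable in $V_{n,i}$, so that $V_{n,i}=R[Y_i]/\prod_{l=1}^{i-1}(Y_i-x_l)$ and the algebra generator $y_i$ acts on $V_{n,i}$ as multiplication by $Y_i$. Since $k=1$ there is never a $\b\b$ pair, so the correction $\delta_{{\bf i}_j, {\bf i}_{j+1}}$ in (8) and (9) is zero throughout; the only non-trivial colour patterns at a crossing are $\b\r$ (black at $j$) and $\r\b$ (black at $j+1$).

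The first task will be to extend $\psi_j$ from the distinguished generator $1\in V_{n,i}$ to all of $V_{n,i}$. The extension is forced by (5)--(9): $\psi_j$ must commute with every $x_l$ for $l\neq j$ (the red strands at unswapped positions), while on the black-strand variable one has the twisted slides $\psi_j(Y_j v)=Y_{j+1}\psi_j(v)$ in the $\b\r$ case and $\psi_j(Y_{j+1}v)=Y_j\psi_j(v)$ in the $\r\b$ case. I would take these as the definition of $\psi_j$ on the free module $R[Y_i]$ and then check they descend to the cyclotomic quotient.

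The heart of the argument, and the step I expect to be the main obstacle, is this descent. For $\psi_i\colon V_{n,i}\to V_{n,i+1}$, applying the extension rules to the defining relation of $V_{n,i}$ gives
\[
\psi_i\Bigl(\prod_{l=1}^{i-1}(Y_i-x_l)\cdot 1\Bigr) \;=\; \prod_{l=1}^{i-1}(Y_{i+1}-x_l)\,(Y_{i+1}-x_i) \;=\; \prod_{l=1}^{i}(Y_{i+1}-x_l),
\]
which is precisely the defining relation of $V_{n,i+1}$, hence zero. For $\psi_{i-1}\colon V_{n,i}\to V_{n,i-1}$,
\[
\psi_{i-1}\Bigl(\prod_{l=1}^{i-1}(Y_i-x_l)\cdot 1\Bigr) \;=\; (Y_{i-1}-x_{i-1})\prod_{l=1}^{i-2}(Y_{i-1}-x_l),
\]
whose second factor is the defining relation of $V_{n,i-1}$. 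Thus the cyclotomic quotients defining the $V_{n,i}$ are engineered to absorb exactly these two crossing actions.

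Given well-definedness, the remaining relations are either visible or short computations. Relations (1)--(4), the far-commutativity relations (5)--(7), and the cyclotomic relation (12) (automatic since $V_{n,1}$ is absent from $V_n$, forcing $\psi_1$ to act by zero on $V_{n,2}$) are built into the setup. Relations (8)--(9) hold by the very formulas used to define the extension, and in the $\r\r$ case both sides vanish trivially. For the quadratic relation (10) I would compute in the $\b\r$ case $\psi_j^2(1\in V_{n,j}) = \psi_j(Y_{j+1}-x_j) = (Y_j-x_j)\cdot 1$, matching the algebra prescription $(y_j-y_{j+1})e_j$ acting as $Y_j-x_j$; the $\r\b$ case is symmetric. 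The braid relation (11) is vacuous on $V_n$: for $k=1$ any length-three composition $\psi_j\psi_{j+1}\psi_j$ or $\psi_{j+1}\psi_j\psi_{j+1}$ applied to a summand $V_{n,i}$ must perform a $\psi$ at a pair of red strands at some step, and hence vanishes.
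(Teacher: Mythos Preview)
Your proposal is correct and is exactly the ``straightforward check'' the paper invokes without spelling out; you have carried it through carefully, including the key well-definedness step (descent to the cyclotomic quotients of the $V_{n,i}$) and the pleasant observation that relation (11) holds vacuously because with only one black strand every triple product $\psi_j\psi_{j+1}\psi_j$ or $\psi_{j+1}\psi_j\psi_{j+1}$ forces a red--red crossing. One small sharpening: in fact $\psi_j$ commutes with \emph{every} $x_l$ (including the one on the swapped red strand), not just those with $l\neq j$; this does not affect your argument since only $x_1,\dots,x_{i-1}$ appear in the defining relation of $V_{n,i}$, but it is worth noting.
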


\begin{proof}
This is a straightforward check.
\end{proof}

\subsection{A basis}
The next result describes a basis of $e_j W(n,1) e_i$.  One draws a diagram with minimal number of crossings from the sequence corresponding to $e_i$ on the bottom to the sequence corresponding to $e_j$ on the top and then decorate on the top freely with dots on the solid red strands but restrictively (depending on how close the dashed black strand gets to the left) with dots on the dashed (black) strand.

\begin{prop}
\label{basisprop}

A basis of $ e_j {W}(n,1) e_i$ over $\Bbbk$ for $i,j \geq 2$ is given by
\begin{equation}
\label{basisforW(n,1)}
\{e_j y_1^{a_1} \cdots y_j^{c_j} \cdots y_{n+1}^{a_{n+1}}  \psi_w e_i | a_i \in \Z_{\geq 0}, 0 \leq c_j \leq min(i,j)-2    \},
\end{equation}

where
$w \in S_{n+1} $ is of minimal length taking the sequence 
$(\underbrace{\r \ldots \r}_{i-1} \b \underbrace{\r \ldots \r}_{n-i+1}) $ to 
$(\underbrace{\r \ldots \r}_{j-1} \b \underbrace{\r \ldots \r}_{n-j+1}) $.

If $i=1$ or $j=1$, $ e_j {W}(n,1) e_i=0$ by the cyclotomic condition.
\end{prop}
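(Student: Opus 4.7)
The plan is to establish spanning and linear independence separately. The case $k=1$ simplifies the combinatorics considerably: any permutation $w$ appearing in $e_j W(n,1) e_i$ must send the source sequence (unique $\b$ in position $i$) to the target (unique $\b$ in position $j$), and the minimal-length such element of $S_{n+1}$ is unique and threads the black strand monotonically between positions $\min(i,j)$ and $\max(i,j)$. This uniqueness is what makes a single index $w$, rather than a whole Schubert cell of choices, suffice to label the basis.

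For the spanning step, I would begin with an arbitrary monomial in the generators and straighten the $\psi$'s using the commutation relations (items 3, 5, 6, 7), the quadratic relation (item 10), and the braid relation (item 11). With only one black strand, any non-reduced $\psi_{i_1} \cdots \psi_{i_l}$ collapses, via item 10 applied to a $\b\b$ or $\b\r$ adjacency (items 4 and 10), to a strictly shorter expression plus dot corrections; repeated application reduces the $\psi$-part to $\psi_w$ for a unique reduced $w$. Then the dot-past-crossing identities (items 8 and 9) move every $y$ to the top, producing the form $e_j \cdot (\text{polynomial in } y_1,\ldots,y_{n+1}) \cdot \psi_w \cdot e_i$. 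Finally, the cyclotomic condition $e_1 = 0$ combined with the crossing--dot relations forces the dot $y_m$ on the black strand when it occupies position $m$ to satisfy a polynomial identity of degree $m-1$ --- concretely $(y_m - x_1)(y_m - x_2) \cdots (y_m - x_{m-1}) = 0$, which is exactly the relation defining $V_{n,m}$ in Section~\ref{repofW(n,1)}. Because a reduced $w$ sweeps the black strand through position $\min(i,j)$, the black-strand dot can be slid to that (leftmost) position, reduced there to exponents at most $\min(i,j)-2$, and slid back up to the top, yielding the claimed bound on $c_j$.

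For linear independence I would use the action on $V_n$ defined in Section~\ref{repofW(n,1)}. Applied to the distinguished vector $1 \in V_{n,i}$, a proposed basis element produces an element of $V_{n,j} = R[y_j]/((y_j - x_1) \cdots (y_j - x_{j-1}))$: the factor $\psi_w$ transports $1 \in V_{n,i}$ to an explicit product of factors $1$ and $(y_\bullet - x_\bullet)$ dictated by the recipe in Section~\ref{repofW(n,1)}, and then multiplication by $y_1^{a_1} \cdots y_j^{c_j} \cdots y_{n+1}^{a_{n+1}}$ acts as multiplication by the corresponding polynomial in $R$ together with $y_j^{c_j}$ on the black strand. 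A triangularity check, with $c_j$ as the leading exponent, shows that as $(a_1, \ldots, a_{n+1}, c_j)$ ranges over the proposed parameter set these images are $\Bbbk$-linearly independent in $V_{n,j}$.

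The main obstacle I expect is the spanning step: commuting a dot through a long $\psi_w$ and then reducing via the cyclotomic polynomial generates many lower-order terms, and termination must be built into the argument via an induction on a triple such as (length of $w$, total dot degree, exponent of the black-strand dot), with each relation verified to strictly decrease this triple. A secondary delicate point is justifying the bound $\min(i,j)-2$ rather than the naive $j-2$ on the black-strand dot; this requires tracking the leftmost position reached by the unique black strand along the trajectory encoded by $w$ and applying the cyclotomic reduction precisely at that position.
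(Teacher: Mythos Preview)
Your proposal is correct and follows essentially the same approach as the paper: spanning via the defining relations together with the cyclotomic condition (the paper pulls the black strand all the way left and resolves the resulting double crossings to obtain exactly the relation $\prod_{l<m}(y_m-x_l)e_m=0$ that you invoke), and linear independence via the faithful action on $V_n$. The paper treats the diagonal case $i=j$ explicitly and then asserts the general case follows, whereas you handle general $i,j$ directly by sliding the black-strand dot to position $\min(i,j)$; the mechanism is the same.
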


\begin{proof}
We begin by showing that the proposed basis actually spans.

First assume that $i=j$.
We claim that

\begin{equation}
\label{spanii}
\begin{tikzpicture}
%\draw (0,0) .. controls (1,1) .. (1,2)[][thick];
%\draw (1,0) .. controls (0,1) .. (0,2)[][thick];
\draw (0,0) -- (0,2)[red][thick];
\draw (1,0) -- (1,2)[red][thick];
\draw (.5,1) node{$\cdots$};
\draw (2,0) -- (2,2)[black][thick][dashed];
\draw (3.5,1) node{$\cdots$};
\draw (3,0) -- (3,2)[red][thick];
\draw (4,0) -- (4,2)[red][thick];

\filldraw [red] (0, 1) circle (2pt);
\filldraw [red] (1, 1) circle (2pt);
\filldraw [black] (2, 1) circle (2pt);
\filldraw [red] (3, 1) circle (2pt);
\filldraw [red] (4, 1) circle (2pt);

\draw (-.25,1.25) node{$a_1$};
\draw (.65,1.25) node{$a_{i-1}$};
\draw (1.75,1.25) node{$c_i$};
\draw (2.65,1.25) node{$a_{i+1}$};
\draw (3.65,1.25) node{$a_{n+1}$};

\end{tikzpicture}
\end{equation}
for $ a_i \in \Z_{\geq 0}$ and $0 \leq c_i \leq i-2  $ span $ e_i {W}(n,1) e_i$.

In order to see this consider the diagram in ~\eqref{spaniia} which is zero by the cyclotomic condition.
\begin{equation}
\label{spaniia}
\begin{tikzpicture}
\draw (2,0) .. controls (-1.5,1) .. (2,2)[][thick][dashed];
\draw (0,0) -- (0,2)[red][thick];
\draw (1,0) -- (1,2)[red][thick];
\draw (.5,1) node{$\cdots$};
%\draw (2,0) -- (2,2)[black][thick];
\draw (3.5,1) node{$\cdots$};
\draw (3,0) -- (3,2)[red][thick];
\draw (4,0) -- (4,2)[red][thick];

\end{tikzpicture}
\end{equation}
Using the relations to remove $i-1$ double crossings in ~\eqref{spaniia} along with sliding dots through crossings 
we get the relation
\begin{equation*}
y_i^{i-1} -(y_1+\cdots+y_{i-1})y_i^{i-2} + \cdots + (-1)^{i-1} (y_1 \cdots y_{i-1})=0.
\end{equation*}
Thus in ~\eqref{spanii} we may take $c_i \leq i-2$.
It then follows with little effort that the set in ~\eqref{basisforW(n,1)} actually spans.

Now we must prove that the elements in ~\eqref{basisforW(n,1)} are linearly independent.

When $i=j$ the elements in
\begin{equation*}
\{e_i y_1^{a_1} \cdots y_i^{c_i} \cdots y_{n+1}^{a_{n+1}}  e_i | a_i \in \Z_{\geq 0}, 0 \leq c_i \leq i-2    \}.
\end{equation*}
are linearly independent.  Otherwise as a linear operator on $V_{n,i}$ the element $y_i$ satisfies a polynomial with coefficients in $R$ of degree less than $i-1$.

Linear independence for the general case of elements in ~\eqref{basisforW(n,1)} now follows easily.
\end{proof}

\begin{corollary}
The representation of $W(n,1)$ on $V_n$ is faithful.
\end{corollary}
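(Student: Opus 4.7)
The plan is to refine the linear-independence part of the proof of Proposition~\ref{basisprop}, showing that the basis~\eqref{basisforW(n,1)} acts as a linearly independent family of operators on $V_n$; faithfulness then follows immediately. Given $w \in W(n,1)$ with $\rho(w)=0$, I decompose $w = \sum_{i,j \geq 2} e_j w e_i$ and use that $\rho(e_k)$ is projection onto $V_{n,k}$ to conclude each block vanishes separately. It therefore suffices to fix $(i,j)$ and show that the basis elements $e_j\,y_1^{a_1}\cdots y_j^{c_j}\cdots y_{n+1}^{a_{n+1}}\,\psi_w\,e_i$ act as linearly independent maps $V_{n,i} \to V_{n,j}$; I test this on $1 \in V_{n,i}$.

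Write $t_j$ for the polynomial generator of $V_{n,j}=R[t_j]/\prod_{k=1}^{j-1}(t_j-x_k)$. The first computation is $\psi_w(1)\in V_{n,j}$. Note that $\psi_w$ is unambiguously defined as an algebra element: the braid correction in the $\psi\psi\psi$ relation requires a $\b\r\b$ pattern, impossible here because each sequence carries a single $\b$-strand. Using the crossing formulas from Section~\ref{repofW(n,1)} (leftward crossings fix $1$, rightward crossings send $1 \mapsto t-x$), together with the relations $\psi_j y_j = y_{j+1}\psi_j$ and $\psi_j y_{j+1} = y_j \psi_j$ (both valid when the crossing involves strands of different colors) and the commutation of dots past far-away crossings, an induction on $|j-i|$ yields
\begin{equation*}
\psi_w(1) = \prod_{p=i}^{j-1}(t_j - x_p) \text{ if } i \leq j, \qquad \psi_w(1) = 1 \text{ if } i > j.
\end{equation*}
In either case $\psi_w(1)$ is monic in $t_j$ of degree $\max(j-i,0)$.

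I finish with a degree/leading-coefficient argument. Under the $R$-module structure from~\eqref{Rmodulestructure}, $y_j$ acts on $V_{n,j}$ as multiplication by $t_j$, and each $y_l$ with $l \neq j$ acts as some $x_{l'}$. Thus the vanishing of the operator on $1$ becomes
\begin{equation*}
\sum_{c_j=0}^{\min(i,j)-2} p_{c_j}(x_1,\ldots,x_n)\, t_j^{c_j}\, \psi_w(1) = 0 \text{ in } V_{n,j},
\end{equation*}
where $p_{c_j}(x) = \sum_a \lambda_{(a,c_j)}\, x^{a'}$. Each summand has $t_j$-degree at most $(\min(i,j)-2) + \max(j-i,0) = j - 2$, so the equation lifts to an identity in $R[t_j]$ because $1, t_j, \ldots, t_j^{j-2}$ is an $R$-basis of $V_{n,j}$. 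Monicity of $\psi_w(1)$ together with the distinctness of the leading $t_j$-exponents $c_j + \max(j-i,0)$ lets me extract top coefficients one at a time, forcing $p_{c_j} = 0 \in R$ for every $c_j$; $R$-linear independence of the monomials $x^{a'}$ then gives all $\lambda_{(a,c_j)} = 0$.

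The only mildly delicate point is the explicit computation of $\psi_w(1)$: one must verify its independence of the chosen reduced expression (automatic here because of the absence of braid corrections) and carefully slide red-dot generators past adjacent crossings using the above relations together with isotopy. This is a mechanical induction on $|j-i|$, and I expect no serious obstruction.
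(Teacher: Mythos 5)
Your proof is correct and follows the same basic route as the paper: the paper's Proposition~\ref{basisprop} establishes the basis~\eqref{basisforW(n,1)} precisely by acting on $V_n$ and doing a $t_j$-degree count, and the corollary is then immediate; you have simply written out in full the case $i\neq j$ which the paper dismisses with ``follows easily.'' The explicit formula $\psi_w(1)=\prod_{p=i}^{j-1}(t_j-x_p)$ for $i<j$ (and $\psi_w(1)=1$ for $i>j$), its monicity, the degree bound $c_j+\max(j-i,0)\le j-2$ that lets you lift from $V_{n,j}$ to $R[t_j]$, and the downward-induction extraction of leading coefficients are all correct and match what a complete version of the paper's argument would say.
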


\subsection{The center}

Recall that $W(n,1)$ (more generally, $W(n,k)$) has the structure of an $R$-module given in ~\eqref{Rmodulestructure}.  The action was denoted by a dot.

\begin{lemma}
\label{centerlemma1}
For $k=2, \ldots, n+1$, there is an equality in $W(n,1)$
\begin{equation*}
y_k^n e_k - E_1(x_1, \ldots, x_n).y_k^{n-1} e_k  + E_2(x_1, \ldots, x_n).y_k^{n-2} e_k + \cdots + (-1)^{n} E_n(x_1, \ldots, x_n).y_k^0 e_k = 0.
\end{equation*}
\end{lemma}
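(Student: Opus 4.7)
The plan is to reduce the statement to a relation already established in the course of the proof of Proposition~\ref{basisprop}. Under the $R$-module structure given in \eqref{Rmodulestructure}, if $r_1<\cdots<r_n$ are the positions of the red entries of the sequence attached to the idempotent $e_k$, then $x_l\cdot m = y_{r_l} m$. For the idempotent $e_k$ these positions are $1,2,\ldots,k-1,k+1,\ldots,n+1$, so the identity to be proved is exactly
\[
\prod_{\substack{l=1\\ l\neq k}}^{n+1}(y_k - y_l)\, e_k = 0.
\]

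In the proof of Proposition~\ref{basisprop} the following key identity was extracted from the diagram \eqref{spaniia}: the black strand emerging from position $k$ on the bottom loops to the left of all red strands and returns to position $k$ on top, and this diagram vanishes by the cyclotomic relation \eqref{cyclotomic}. Removing the $k-1$ double crossings using relation \eqref{mm+1} and sliding dots through crossings produces the identity
\[
\bigl(y_k^{k-1} - (y_1+\cdots+y_{k-1})y_k^{k-2} + \cdots + (-1)^{k-1}y_1\cdots y_{k-1}\bigr)\, e_k \;=\; \prod_{l=1}^{k-1}(y_k - y_l)\, e_k \;=\; 0
\]
inside $W(n,1)$.

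Since all the $y_j$'s commute by relation~(7) of Section~\ref{algebrarelationsI} and $y_j$ commutes with $e_k$ by relation~(2), we may multiply the identity above on the right by the commuting factor $\prod_{l=k+1}^{n+1}(y_k-y_l)$ to obtain
\[
\prod_{\substack{l=1\\ l\neq k}}^{n+1}(y_k - y_l)\, e_k \;=\; 0,
\]
which is the desired equality. There is essentially no obstacle here: the main content of the lemma is just the observation that the cyclotomic-based relation of Proposition~\ref{basisprop} already provides all the factors indexed by the red strands strictly to the left of position $k$, while the remaining factors can be adjoined for free thanks to commutativity. Unpacking the definitions of the elementary symmetric polynomials $E_i(x_1,\ldots,x_n)$ in terms of the $R$-action then converts $\prod_{l\neq k}(y_k - y_l)$ into $\sum_{i=0}^{n} (-1)^i E_i(x_1,\ldots,x_n)\cdot y_k^{n-i}$, yielding the statement of the lemma.
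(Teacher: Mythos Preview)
Your proof is correct. Both your argument and the paper's rest on the same core fact, namely the relation
\[
\prod_{l=1}^{k-1}(y_k-y_l)\,e_k=0
\]
coming from the cyclotomic diagram~\eqref{spaniia}, and both then append the factors $(y_k-y_l)$ for $l>k$. The difference is in how the appending is done. You simply observe that the remaining factor $\prod_{l=k+1}^{n+1}(y_k-y_l)$ commutes with everything and multiply it in directly, which is clean and immediate. The paper instead realizes these extra factors diagrammatically as the double crossing $(\psi_k\cdots\psi_n)(\psi_n\cdots\psi_k)e_k$, expands both the left and right products separately into elementary symmetric functions in the two blocks of variables $x_1,\dots,x_{k-1}$ and $x_k,\dots,x_n$, and then recombines them via the convolution identity $\sum_\gamma E_\gamma E_{m-\gamma}=E_m$. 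Your route is shorter and more transparent; the paper's route has the minor advantage of making the passage from $y$-variables to $x$-variables explicit at every stage, but at the cost of several intermediate expansions that your argument shows are unnecessary.
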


\begin{proof}
By the relations in $W(n,1)$ we may resolve $n-k+1$ double crossings and get
\begin{equation*}
y_k^{k-1} (\psi_k \cdots \psi_n)  (\psi_n \cdots  \psi_k)  e_k
=
\sum_{j=0}^{n-k+1} (-1)^j  E_j(x_k, \ldots, x_n) .( y_k^{k-1} y_k^{n-k+1-j} e_k).
\end{equation*}

Thus
\begin{equation}
\label{centerlemmaeq1}
y_k^n e_k = y_k^{k-1} (\psi_k \cdots \psi_n)  (\psi_n \cdots  \psi_k)  e_k
+
\sum_{j=1}^{n-k+1} (-1)^{j+1}  E_j(x_k, \ldots, x_n) .( y_k^{n-j}  e_k).
\end{equation}
Using the relations in $W(n,1)$ for $y_k^{k-1} e_k$ in the first term in the right hand side of equation ~\eqref{centerlemmaeq1} we get
\begin{align}
\label{centerlemmaeq2}
y_k^n e_k = &\sum_{j=1}^{k-1} (-1)^{j+1} E_j(x_1, \ldots, x_{k-1}) . 
y_k^{k-j-1} (\psi_k \cdots \psi_n)  (\psi_n \cdots  \psi_k)  e_k
+ \\
& \sum_{j=1}^{n-k+1} (-1)^{j+1}  E_j(x_k, \ldots, x_n) .( y_k^{n-j}  e_k). \nonumber
\end{align}
Using relations in $W(n,1)$ to remove double crossings in the first sum of equation ~\eqref{centerlemmaeq2} we get
\begin{align}
\label{centerlemmaeq3}
y_k^n e_k = & \sum_{j=1}^{k-1}  \sum_{r=0}^{n-k+1} (-1)^{j+r+1}  E_j(x_1, \ldots, x_{k-1}) E_r(x_k, \ldots, x_n) . y_k^{n-j-r}  e_k + \\
& \sum_{j=1}^{n-k+1} (-1)^{j+1}  E_j(x_k, \ldots, x_n) .( y_k^{n-j}  e_k). \nonumber
\end{align}
Grouping together terms according to the exponent of $y_k$ in equation ~\eqref{centerlemmaeq3} we arrive at
\begin{align*}
y_k^n e_k &= \sum_{p=0}^{n-1} (-1)^{n-1+p} \sum_{\gamma} E_{\gamma}(x_1, \ldots, x_{k-1}) E_{n-p-\gamma}(x_k, \ldots, x_n) . y_k^p e_k \\
&= \sum_{p=0}^{n-1} (-1)^{n-1+p} E_{n-p}(x_1, \ldots, x_n). y_k^p e_k
\end{align*}
which proves the lemma.
\end{proof}

\begin{prop}
\label{centerofW(n,1)}
The center of $W(n,1)$ is isomorphic as an algebra to 
$R[z]/(z-x_1)\cdots(z-x_n)$.
\end{prop}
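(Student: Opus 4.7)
The plan is to construct a ring homomorphism $\phi\colon R[z]/((z-x_1)\cdots(z-x_n)) \to Z(W(n,1))$ and show it is an isomorphism, using the faithful representation $V_n$ from Section~\ref{repofW(n,1)}.

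I first exhibit central elements. Let $z := \sum_{j=2}^{n+1} y_j e_j$ and, for each $l$, $\hat{x}_l := x_l\cdot 1 = \sum_{j=2}^{n+1} y_{r_l(j)} e_j$, where $r_l(j)$ is the position of the $l$th red strand in the idempotent $e_j$. Commutation with every $e_i$ and $y_m$ is immediate from the defining relations, and commutation with each $\psi_k$ reduces (via $\psi_k e_j = 0$ for $j \notin \{k,k+1\}$) to the intertwining identities $\psi_k y_k e_k = y_{k+1}\psi_k e_k$ and $\psi_k y_{k+1} e_{k+1} = y_k\psi_k e_{k+1}$ from relations~(8)--(9), whose $\delta$-terms vanish because the letters at positions $k$ and $k+1$ are $\b$ and $\r$ (in some order) on the relevant idempotents. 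Since $\{r_l(j)\}_{l=1}^{n} = \{1,\ldots,n+1\}\setminus\{j\}$, applying Lemma~\ref{centerlemma1} slice by slice gives
\begin{equation*}
(z-\hat{x}_1)\cdots(z-\hat{x}_n)\, e_j = \prod_{m\neq j}(y_j-y_m)\, e_j = \prod_{l=1}^{n}(y_j-x_l)\, e_j = 0,
\end{equation*}
so $\phi$ is well defined. Injectivity is clear from Proposition~\ref{basisprop}: the slice $e_{n+1}W(n,1)e_{n+1}$ is the commutative ring $R[y_{n+1}]/\prod_{l=1}^{n}(y_{n+1}-x_l)$, and right multiplication by $e_{n+1}$ sends $\phi(f)$ to $f$ under this identification.

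For surjectivity, let $c\in Z(W(n,1))$. Restricted to $V_{n,j}$, $c$ commutes with the action of both $\hat{x}_l$ (which acts as multiplication by $x_l$) and $y_j$ (which acts as multiplication by $T_j$), hence is $R[T_j]$-linear; since $V_{n,j}$ is a cyclic $R[T_j]$-module, $c$ acts by multiplication by a unique polynomial $q_j \in V_{n,j} \cong R[T_j]/\prod_{l=1}^{j-1}(T_j-x_l)$. From relation~(9) applied to the sequence for $e_j$ one has $\psi_{j-1} y_j e_j = y_{j-1}\psi_{j-1} e_j$, so $\psi_{j-1}\colon V_{n,j}\to V_{n,j-1}$ sends $T_j\mapsto T_{j-1}$; the centrality equation $\psi_{j-1}c = c\psi_{j-1}$ applied to $1\in V_{n,j}$ then yields $q_j \equiv q_{j-1}$ in $V_{n,j-1}$. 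Thus $(q_j)_{j=2}^{n+1}$ is compatible along the tower of quotients
\begin{equation*}
V_{n,n+1} \twoheadrightarrow V_{n,n} \twoheadrightarrow \cdots \twoheadrightarrow V_{n,2},
\end{equation*}
whose top is precisely $R[z]/\prod_{l=1}^{n}(z-x_l)$. The whole family is therefore determined by $q_{n+1}$, and $\phi(q_{n+1})$ and $c$ act identically on $V_n$; by faithfulness of $V_n$, $c = \phi(q_{n+1})$.

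The main obstacle is the compatibility step: to translate $\psi_{j-1}c = c\psi_{j-1}$ into the polynomial congruence $q_j \equiv q_{j-1} \pmod{\prod_{l=1}^{j-2}(T-x_l)}$, one needs the clean identification of $\psi_{j-1}\colon V_{n,j}\to V_{n,j-1}$ with the quotient map $T_j\mapsto T_{j-1}$. Once the intertwiner in relation~(9) is verified (with vanishing $\delta$-term) and the module-theoretic identification $V_{n,j} \cong R[T_j]/\prod_{l<j}(T_j-x_l)$ is in place, the rest is a routine compatibility-of-quotients argument.
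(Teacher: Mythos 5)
Your proof is correct and establishes the result by a genuinely different route than the paper's. The paper argues internally in the algebra: given a central element $c$, it expands each slice $ce_k$ in the basis of Proposition~\ref{basisprop} with coefficients $g_{kj}\in R$, uses centrality against the crossings $e_{k+1}\psi_k e_k$ to derive the recursion~\eqref{grecursion}, and deduces that all coefficients are determined by $g_{n+1,j}$, so the center is spanned over $R$ by $z^0,\dots,z^{n-1}$. You package the same information module-theoretically through the faithful representation $V_n$: $c$ acts on each cyclic $R[T_j]$-module $V_{n,j}$ by a unique $q_j$, and the very same intertwining identity (relation~(9) with vanishing $\delta$-term, applied to $\psi_{j-1}e_j$) forces $q_{j-1}$ to be the image of $q_j$ along the tower $V_{n,n+1}\twoheadrightarrow\cdots\twoheadrightarrow V_{n,2}$. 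The paper's coefficient recursion and your compatibility-of-quotients condition are two dressings of the same computation, and both rely on Lemma~\ref{centerlemma1} for the minimal polynomial of $z$. What your version gains: injectivity is read off immediately from the identification $e_{n+1}W(n,1)e_{n+1}\cong R[z]/\prod_l(z-x_l)$ (a direct consequence of Proposition~\ref{basisprop}), and surjectivity closes cleanly via faithfulness rather than by tracking coefficients through every slice. Your structuring as ``build $\phi$, prove injectivity, prove surjectivity'' also makes the stated isomorphism of $R$-algebras explicit, whereas the paper leaves the algebra structure of the generating set somewhat implicit.
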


\begin{proof}
We first check that the center is generated as an $R$-module by the set
\begin{equation*}
\{z^k = \sum_{i=2}^{n+1} y_i^k e_i | k=0,\ldots,n-1 \}.
\end{equation*}
Lemma ~\ref{centerlemma1} shows that $z$ satisfies the appropriate relation.

By applying idempotents $e_i$ on the left and right of a central element $z$ it is clear that we may write $z$ as:
\begin{equation*}
z=\sum_{i=2}^{n+1} e_i f_i(y_1,\ldots,y_{n+1}) e_i = \sum_{i=2}^{n+1}  f_i(y_1,\ldots,y_{n+1}) e_i
\end{equation*}
where $f_i$ is a polynomial in the $y_j$.

Rewriting $z$ in the basis of $W(n,1)$ described in ~\eqref{basisforW(n,1)} we have
\begin{equation*}
z= \sum_{i=2}^{n+1} \sum_{j=0}^{i-2} g_{ij}(x_1, \ldots, x_{n} ).y_i^j e_i.
\end{equation*}
We will abbreviate $g_{ij}=g_{ij}(x_1, \ldots, x_{n} )$. 

We would like to show that
\begin{equation*}
z= \sum_{i=2}^{n+1} \sum_{j=0}^{n-1} g_{n+1,j}. y_i^j e_i
\end{equation*}
or equivalently that
\begin{equation}
\label{goalforformofz}
ze_k= \sum_{j=0}^{n-1} g_{n+1,j}. y_k^j e_k.
\end{equation}

Since $z$ is central,
$ (e_{k+1} \psi_k e_k) z = z (e_{k+1} \psi_k e_k)$.
We compute
\begin{equation*}
 (e_{k+1} \psi_k e_k) z =
 \sum_{j=0}^{k-2} g_{kj}. \psi_k y_k^j e_k.
\end{equation*}

On the other hand
\begin{align*}
z(e_{k+1} \psi_k e_k) &= \sum_{j=0}^{k-1} g_{k+1,j} .y_{k+1}^j e_{k+1} \psi_k e_k \\
&= \sum_{j=0}^{k-1} g_{k+1,j} .\psi_k y_k^j e_k \\
&= \sum_{j=0}^{k-2} (g_{k+1,j}+ (-1)^{k-j} E_{k-1-j}(x_1, \ldots, x_{k-1}) g_{k+1,k-1} ) .\psi_k y_k^j e_k
\end{align*}
where we used relations in $W(n,1)$ to establish the third equality above.

Now that we have expressed $(e_{k+1} \psi_k e_k) z$ and $ z (e_{k+1} \psi_k e_k)$
in terms of the basis described earlier we conclude
\begin{equation}
\label{grecursion}
g_{k,j} = g_{k+1,j} + (-1)^{k-j} E_{k-1-j}(x_1, \ldots, x_{k-1}) g_{k+1,k-1}.
\end{equation}

Thus
\begin{align*}
z e_k &= \sum_{j=0}^{k-2} g_{k,j}. y_k^j e_k \\
&= \sum_{j=0}^{k-2} g_{k+1,j}. y_k^j e_k + 
\sum_{j=0}^{k-2} (-1)^{k-j} g_{k+1,k-1} E_{k-1-j}(x_1, \ldots, x_{k-1}). y_k^j e_k \\
&= \sum_{j=0}^{k-1} g_{k+1,j}. y_k^j e_k.
\end{align*}

Equality ~\eqref{goalforformofz} follows by iterating the recursion ~\eqref{grecursion}. 
Therefore, as an $R$-module the center is generated by $z^0, \ldots, z^{n-1}$.
The fact that $z$ satisfies the relation in the Proposition follows from Lemma ~\ref{centerlemma1}.

\end{proof}

\begin{ex}
For $n=2$, the generating set of the center as a free $R$-module is graphically depicted below. 

\begin{equation*}
\begin{tikzpicture}
\draw (.5,1) node{$z^0=$};

\draw (1,0) -- (1,2)[red][thick];
\draw (2,0) -- (2,2)[black][thick][dashed];
\draw (3,0) -- (3,2)[red][thick];
\draw (3.5,1) node{$+$};

\draw (4,0) -- (4,2)[red][thick];
\draw (6,0) -- (6,2)[black][thick][dashed];
\draw (5,0) -- (5,2)[red][thick];

%%%%%%%%%
\draw (7.5,1) node{$z^1=$};

\draw (8,0) -- (8,2)[red][thick];
\draw (9,0) -- (9,2)[black][thick][dashed];
\draw (10,0) -- (10,2)[red][thick];
\draw (10.5,1) node{$+$};

\draw (11,0) -- (11,2)[red][thick];
\draw (13,0) -- (13,2)[black][thick][dashed];
\draw (12,0) -- (12,2)[red][thick];

\filldraw[black](9,1) circle (2pt);
\filldraw[black](13,1) circle (2pt);
%\draw (.5,1) node{$a$};
%\draw (1.5,1) node{$b$};
%\draw (3.5,1) node{$\mapsto$};
%\draw (4.5,1) node{$x_1^a x_2^b$};
%\draw (5,0) -- (5,2)[green][thick];

\end{tikzpicture}
\end{equation*}

\end{ex}

\subsection{A subalgebra} 
\label{specialsubalgebra}
Set $W^p := W^p(n,1)$, where $W^p(n,k)$ was defined in Section ~\ref{subsectionW^p(n,k)}.
Let $e_{i,p}$ be the idempotent corresponding to the sequence where the single entry $\b$ appears in position $i$.

We may describe $W^p(n,1)$ in a graphical way just as we did for $W(n,1)$ in section ~\ref{graphicalpresentation}.
Along with the generators for $W(n,1)$ graphically described earlier, we also have a thick red line which can carry dots labeled $E_1$ or $E_2$.  The thick red line may also intersect with the black dashed line.

\begin{equation*}
\label{subgenerators}
\begin{tikzpicture}
\draw (0,0) -- (0,1)[very thick][red];

\draw (2,0) -- (2,1)[very thick][red];
\filldraw [red] (2, .5) circle (2pt);
\draw (2.5,.5) node{$E_1$};

\draw (4,0) -- (4,1)[very thick][red];
\filldraw [red] (4, .5) circle (2pt);
\draw (4.5,.5) node{$E_2$};

\draw (6,0) -- (7,1)[very thick][red];
\draw (7,0) -- (6,1)[thick][dashed][black];

\draw (8,0) -- (9,1)[thick][dashed][black];
\draw (9,0) -- (8,1)[very thick][][red];

%\draw (0,0) .. controls (1,1) .. (1,2)[][thick];
%\draw (1,0) .. controls (0,1) .. (0,2)[][thick];

\end{tikzpicture}
\end{equation*}

In addition to the local relations in $W(n,1)$ where there is only one (black) dashed and solid thin (red) strands, there are also relations
\begin{equation*}
\begin{tikzpicture}
\draw (0,0) .. controls (1,1) .. (0,2)[red][very thick];
\draw (1,0) .. controls (0,1) .. (1,2)[dashed][thick];
\draw (1.5,1) node{$=$};

\draw (2,0) -- (2,2)[very thick][red];
\draw (3,0) -- (3,2)[thick][dashed][black];
\filldraw [red] (2, 1) circle (2pt);
\draw (2.3,1) node{$E_2$};

\draw (3.5,1) node{$-$};

\draw (4,0) -- (4,2)[very thick][red];
\draw (5,0) -- (5,2)[thick][dashed][black];
\filldraw [red] (4, 1) circle (2pt);
\draw (4.3,1) node{$E_1$};
\filldraw [black] (5, 1) circle (2pt);

\draw (5.5,1) node{$+$};

\draw (6,0) -- (6,2)[very thick][red];
\draw (7,0) -- (7,2)[thick][dashed][black];
%\filldraw [red] (6, 1) circle (2pt);
\filldraw [black] (7, .5) circle (2pt);
\filldraw [black] (7, 1.5) circle (2pt);

\end{tikzpicture}
\end{equation*}

\begin{equation*}
\begin{tikzpicture}
\draw (0,0) .. controls (1,1) .. (0,2)[dashed][thick];
\draw (1,0) .. controls (0,1) .. (1,2)[red][very thick];
\draw (1.5,1) node{$=$};

\draw (2,0) -- (2,2)[thick][dashed][black];
\draw (3,0) -- (3,2)[very thick][][red];
\filldraw [red] (3, 1) circle (2pt);
\draw (2.7,1) node{$E_2$};

\draw (3.5,1) node{$-$};

\draw (4,0) -- (4,2)[thick][dashed][black];    
\draw (5,0) -- (5,2) [very thick][red];
\filldraw [red] (5, 1) circle (2pt);
\draw (4.7,1) node{$E_1$};
\filldraw [black] (4, 1) circle (2pt);

\draw (5.5,1) node{$+$};

\draw (6,0) -- (6,2)[thick][dashed][black]; 
\draw (7,0) -- (7,2)[very thick][red];
%\filldraw [red] (7, 1) circle (2pt);
\filldraw [black] (6, .5) circle (2pt);
\filldraw [black] (6, 1.5) circle (2pt);

\end{tikzpicture}
\end{equation*}
There are also some obvious other relations coming from relations in $W(n,1)$ by
treating the thick (red) line as two thin red lines and the dots $E_i$ as the sum or product of red dots on thin (red) lines.  For example there is an equality:
\begin{equation*}
\label{otherobviousrelationsrexample}
\begin{tikzpicture}
%\draw (0,0) -- (0,1)[very thick][red];

%\draw (2,0) -- (2,1)[very thick][red];
\filldraw [red] (6.25, .25) circle (2pt);
\draw (5.9,.3) node{$E_i$};

%\draw (4,0) -- (4,1)[very thick][red];
\filldraw [red] (8.75, .75) circle (2pt);
\draw (9.1,.7) node{$E_i$};

\draw (6,0) -- (7,1)[very thick][red];
\draw (7,0) -- (6,1)[thick][dashed][black];

\draw (7.5,.5) node{$=$};

\draw (8,0) -- (9,1) [very thick][][red];
\draw (9,0) -- (8,1) [thick][dashed][black];

\end{tikzpicture}
\end{equation*}

In order to construct a basis of $W^p(n,1)$ we will find a representation for it just as we did for $W(n,1)$.
Recall the vector space $V_n$ defined in Section ~\ref{repofW(n,1)} and consider the subspace
\begin{equation*}
V_n^p = \bigoplus_{\substack{i=2 \\ i \neq p}}^{n+1} V_{n,i}
\end{equation*}

We construct an action of $W^p(n,1)$ on $V_n^p$ as follows.
\begin{itemize}

\item For $ i \leq p$ the idempotent $e_{i,p} $ acts trivially on $V_{n,j}^p$ unless $j=i-1$ in which case $e_i$ acts as the identity on $V_{n,i-1}^p$.  
If $i >p$ then $e_{i,p}$ acts non-trivially only on $V_{n,i}^p$ in which case it acts as the identity.

\item
Assuming $l<p$, a dot on the $l$-th solid, thin, red strand of a diagram (with no crossings) acts as multiplication by $x_l$.

Assuming $l \geq p$, a dot on the $l$-th solid, thin, red strand of a diagram (with no crossings) acts as multiplication by $x_{l+2}$.

A dot labeled by $E_1$ on a thick, solid, red strand acts by $y_p + y_{p+1}$.
A dot labeled by $E_2$ on a thick, solid, red strand acts by $y_p y_{p+1}$.

A dot on the black strand in position $i$ of a diagram with no crossing acts as multiplication by $y_i$.

\item If $p \geq i$, a crossing taking the the element $\b$ in the $i$-th position on the bottom to the $(i-1)$-st position on the top maps $1 \in V_{n,i-1}$ to $ 1 \in V_{n,i-2}$.

If $p = i-1$, a crossing taking the the element $\b$ in the $i$-th position on the bottom to the $(i-1)$-st position on the top maps $1 \in V_{n,i-1}$ to $ 1 \in V_{n,i-3}$.

If $p \leq i-2$, a crossing taking the the element $\b$ in the $i$-th position on the bottom to the $(i-1)$-st position on the top maps $1 \in V_{n,i}$ to $ 1 \in V_{n,i-1}$.

\item If $i < p$, a crossing taking the the element $\b$ in the $i$-th position on the bottom to the $(i+1)$-st position on the top maps $1 \in V_{n,i-1}$ to $ y_{i-1}-x_{i-1} \in V_{n,i}$.

If $i = p$, a crossing taking the the element $\b$ in the $i$-th position on the bottom to the $(i+1)$-st position on the top maps $1 \in V_{n,i-1}$ to $ (y_{i}-x_{i-1})(y_{i}-x_{i}) \in V_{n,i+1}$.

If $i > p$, a crossing taking the the element $\b$ in the $i$-th position on the bottom to the $(i+1)$-st position on the top maps $1 \in V_{n,i}$ to $ y_{i}-x_{i+1} \in V_{n,i+1}$.

\end{itemize}

\begin{prop}
$V_n^p$ is a representation of $W^p(n,1)$.
\end{prop}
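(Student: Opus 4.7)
The plan is to verify each defining relation of $W^p(n,1)$ as an identity of operators on $V_n^p$, in direct parallel with the (omitted) verification that $V_n$ is a representation of $W(n,1)$. Conceptually, $V_n^p$ is obtained from $V_n$ by deleting the summand $V_{n,p}$, and the action on $V_n^p$ should be thought of as being inherited from $V_n$ under the identification that merges positions $p$ and $p+1$ of a $W(n,1)$-sequence into the single thick $\R$-position of a $W^p(n,1)$-sequence; the deleted summand $V_{n,p}$ is precisely the one that would place $\b$ strictly between the two merged reds, which is no longer a legal configuration in $W^p(n,1)$.

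For every defining relation of $W^p(n,1)$ that does not involve $\R$, $E_1$, or $E_2$, the verification is word-for-word the same as for $W(n,1)$ on $V_n$, since the surviving summands carry the same operators with the same formulas. The relations that touch the thick strand are checked as follows. Centrality of $E_1$ and $E_2$ holds because on each $V_{n,j}^p$ they act as multiplication by the symmetric polynomials $x_p + x_{p+1}$ and $x_p x_{p+1}$, which commute with every other operator in the representation. The relation $y_j e({\bf i}) = 0$ for ${\bf i}_j = \R$ is built into the definition: no action is assigned to a plain dot on the thick strand, only to the symmetric combinations $E_1$ and $E_2$. The cyclotomic relation $\delta_{{\bf i}_1,\b} e({\bf i}) = 0$ is automatic because no summand of $V_n^p$ would correspond to placing $\b$ at position $1$.

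The two substantive new checks are the quadratic relation in the $\R\b$ (or $\b\R$) sector and the modified braid relation in the $\b\R\b$ sector. The quadratic relation $\psi_j^2 e({\bf i}) = (E_2 - E_1 y_{j+1} + y_{j+1}^2) e({\bf i})$ falls out of the fact that the $\b$-over-$\R$ crossing was defined so that its composition with the reverse crossing multiplies by $(y - x_p)(y - x_{p+1}) = y^2 - E_1 y + E_2$, matching the stated relation on the nose. For the braid relation in the $\b\R\b$ sector, one expands both triple-crossing compositions on $1$ in the relevant summand and verifies that their difference collapses to $-(y_j - E_1 + y_{j+2}) e({\bf i})$. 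This last step is the principal obstacle: it requires tracking six successive elementary crossings of types $\b\r$, $\r\b$, $\b\R$, and $\R\b$, each of which introduces a linear or quadratic polynomial factor, and showing that the difference between the two triple products precisely equals the stated linear correction term involving $E_1$. The quadratic computation in the $\R\b$ case serves as the bookkeeping warm-up whose structure reappears inside the braid computation.
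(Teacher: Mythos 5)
Your overall strategy — verify each defining relation of $W^p(n,1)$ as an operator identity on $V_n^p$ — is correct, and this is exactly what the paper's one-line ``straightforward check'' implicitly invokes. Your treatment of the easy cases (relations untouched by $\R, E_1, E_2$; centrality of $E_1, E_2$ as multiplication by symmetric functions in the two ``hidden'' variables; $y_j e({\bf i})=0$ on $\R$-positions; the cyclotomic relation) is sound, and you correctly identify the $\R\b$/$\b\R$ quadratic relation $\psi_j^2 = E_2 - E_1 y + y^2 = (y-x_p)(y-x_{p+1})$ as the genuinely new computation, with the factorization coming from composing the two constituent crossings in $V_n$.

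However, your ``principal obstacle'' is not there. You identify the modified braid relation in the $\b\R\b$ sector, with correction term $(-y_j + E_1 - y_{j+2})\delta_{{\bf i}_j,\b}\delta_{{\bf i}_{j+1},\R}\delta_{{\bf i}_{j+2},\b}$, as the hard part. But $W^p(n,1)$ has $k=1$, so every sequence contains exactly one $\b$; the coefficient $\delta_{{\bf i}_j,\b}\delta_{{\bf i}_{j+2},\b}$ (and likewise $\delta_{{\bf i}_j,\b}\delta_{{\bf i}_{j+1},\r}\delta_{{\bf i}_{j+2},\b}$) is therefore identically zero. In fact the entire braid relation is trivially $0=0$ on $V_n^p$: among any three consecutive positions at most one carries $\b$, so at least two of the three are colored from $\{\r,\R\}$, and in each of the two triple compositions some $\psi$ in the word must act on an adjacent red--red pair and kill the element. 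There is no ``six-crossing bookkeeping'' to do. The braid check collapses, and the quadratic relation in the $\R\b$/$\b\R$ sector together with the centrality of $E_1,E_2$ past crossings are really the only substantive new verifications. (The heavier $\b\R\b$ computation you describe would indeed arise for $W^p(n,k)$ with $k\ge 2$, but not for the $k=1$ case asserted here.)
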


\begin{proof}
This is a straightfoward check.
\end{proof}

\begin{prop}
\label{subbasisprop}
For $i,j \geq 2$,
let $w \in S_{n} $ be of minimal length taking the sequence 
with $\b$ occurring in entry $i$ to the sequence with $\b$ occurring in entry $j$.
A basis of $ e_j {W}^p(n,1) e_i$ over $\Bbbk$ is given by
\begin{itemize}
\item $ \{e_j E_1^{a_{-1}} E_2^{a_{-2}} y_1^{a_1} \cdots y_j^{c_j} \cdots y_{n}^{a_{n}}  \psi_w e_i | a_i \in \Z_{\geq 0}, a_{p+1}=0, 0 \leq c_j \leq min(i,j)-2    \} $ 
if $j  \leq p$
\item $ \{e_j E_1^{a_{-1}} E_2^{a_{-2}} y_1^{a_1} \cdots y_j^{c_j} \cdots y_{n}^{a_{n}}  \psi_w e_i | a_i \in \Z_{\geq 0}, a_{p}=0, 0 \leq c_j \leq min(i,j)-2    \} $ 
if $j  >p, i \leq p$
\item $ \{e_j E_1^{a_{-1}} E_2^{a_{-2}}  y_1^{a_1} \cdots y_j^{c_j} \cdots y_{n}^{a_{n}}  \psi_w e_i | a_i \in \Z_{\geq 0}, a_p=0, 0 \leq c_j \leq min(i,j)-1    \} $ 
if $min(i,j) > p$.
\end{itemize}
\end{prop}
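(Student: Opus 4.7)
The strategy parallels that of Proposition~\ref{basisprop}: first show the listed elements span $e_j W^p(n,1) e_i$, then verify linear independence by exhibiting their action on the representation $V_n^p$ constructed just above. Two new features distinguish the argument from the $W(n,1)$ case. First, the relation $y_l e({\bf i}) = 0$ whenever ${\bf i}_l = \R$ forces $a_p = 0$ when $i$ or $j$ exceeds $p$ (since $\R$ sits in position $p$ for such idempotents) and $a_{p+1} = 0$ when both $i, j \leq p$ (since $\R$ sits in position $p+1$); this produces the three cases of the proposition. Second, since $E_1$ and $E_2$ are central they always factor out on the left as $E_1^{a_{-1}} E_2^{a_{-2}}$ without interfering with other reductions.

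The bound on $c_j$ is obtained by closing up as in \eqref{spaniia}: a black strand looping to the left of all other strands is zero by the cyclotomic relation. Expanding the resulting double crossings using the $\psi_j^2 e({\bf i})$ relation, each crossing with a thin red strand contributes a factor of degree $1$ in $y_j$, while a crossing with the thick red $\R$-strand contributes the degree-$2$ factor $E_2 - E_1 y_{j+1} + y_{j+1}^2$. In cases $1$ and $2$, the loop passes only $\min(i,j) - 1$ thin red strands on the appropriate side (the $\R$-strand lies outside), yielding a degree $\min(i,j) - 1$ polynomial in $y_j$ and giving $c_j \leq \min(i,j) - 2$. In case $3$ (both $i, j > p$) the loop additionally passes the $\R$-strand, producing one extra unit of degree and the relaxed bound $c_j \leq \min(i,j) - 1$. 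When $i \neq j$, the commutation relation $\psi_j y_j e({\bf i}) - y_{j+1} \psi_j e({\bf i}) = \delta_{{\bf i}_j, {\bf i}_{j+1}}$ lets one translate a power of $y_j$ at the top of the diagram into a power of $y_i$ at the bottom (modulo lower-order terms), so one may apply the closing argument at whichever endpoint gives the tighter bound; this is what produces the uniform dependence on $\min(i,j)$.

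Linear independence follows from the representation $V_n^p$: distinct idempotents project to distinct summands, the $x$-variables act by multiplication by algebraically independent polynomial generators on each summand, and on the component corresponding to $e_i$ (respectively $e_j$) the element $y_j$ satisfies a minimal polynomial of degree exactly $\min(i,j) - 1$ in cases $1$--$2$ and $\min(i,j)$ in case $3$ --- the very degree dictated by the closing argument above. Separating monomials by the distinct summands reached via $\psi_w$, and then by the powers of the $y$-variables within each summand, completes the linear independence check.

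The main obstacle is the degree accounting in case $3$: one must verify that the single degree-$2$ contribution from the $\R$-crossing combines correctly with the remaining degree-$1$ contributions and with any $E_1, E_2$ factors arising from simplification, yielding a monic polynomial of degree exactly $\min(i,j)$ rather than a relation of higher degree that would erroneously shrink the basis. Once this polynomial identity is established, the remainder of the proof is a routine transposition of the arguments of Proposition~\ref{basisprop}.
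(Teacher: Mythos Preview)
Your proposal is correct and follows essentially the same approach as the paper, which simply says the proof is similar to that of Proposition~\ref{basisprop}; you have filled in precisely the adaptations (the vanishing $y_l$ on the $\R$-strand, the degree-$2$ contribution of the thick crossing in the closing argument, and the use of $V_n^p$ for independence) that this similarity entails. One small slip: the condition forcing $a_p=0$ versus $a_{p+1}=0$ depends only on $j$ (since the $y$'s sit next to $e_j$), not on ``$i$ or $j$''; this does not affect the argument but is worth stating precisely.
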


\begin{proof}
The proof of this is similar to the proof of Proposition ~\ref{basisprop}.
\end{proof}

\begin{corollary}
The representation of $W^p(n,1)$ on $V_n^p$ is faithful.
\end{corollary}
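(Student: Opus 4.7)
The plan is to parallel the proof of faithfulness for $W(n,1)$ on $V_n$ that appears immediately above in the paper. There, the corollary is essentially immediate because the proof of Proposition~\ref{basisprop} establishes linear independence of the basis precisely by exhibiting its elements as linearly independent operators on $V_n$. I would carry out the same style of argument here, leaning on the basis of Proposition~\ref{subbasisprop}.

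First, suppose $a \in W^p(n,1)$ is nonzero. Using the idempotent decomposition $a = \sum_{i,j} e_j a e_i$, pick some $(i,j)$ with $e_j a e_i \neq 0$ and expand this element in the basis of $e_j W^p(n,1) e_i$ from Proposition~\ref{subbasisprop}. Let $i'$ denote the unique index on which $e_i$ acts nontrivially on $V_n^p$ (namely $i-1$ if $i \leq p$ and $i$ if $i > p$), and similarly define $j'$. I would then evaluate $a$ on the generator $1 \in V_{n,i'}^p$; the result lies in $V_{n,j'}^p = R[y_{j'}]/\prod_{\ell}(y_{j'}-x_{\ell})$, and faithfulness will follow once I show this image is nonzero whenever the expansion of $a$ is nonzero.

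The image can be read off term by term: a basis element of the form $E_1^{a_{-1}} E_2^{a_{-2}}\, y_1^{a_1}\cdots y_j^{c_j}\cdots y_n^{a_n}\, \psi_w$ sends $1$ to an explicit polynomial in the $x_\ell$'s and in $y_{j'}$, times the image of $E_1^{a_{-1}} E_2^{a_{-2}}$ under the prescribed action. The restriction $c_j \leq \min(i,j)-2$ (or $\min(i,j)-1$ in the third case of the proposition) keeps the $y_{j'}$-degree strictly below the degree of the relation defining $V_{n,j'}^p$, so the $y_{j'}^{c_j}$ contributions stay linearly independent in the quotient. The polynomial factors in the $x_\ell$'s sit in the free commutative ring $R$ (with the appropriate coordinate suppressed, as in Proposition~\ref{subbasisprop}) and therefore distinguish distinct tuples $(a_1, \ldots, a_n)$. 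Combined with the factor coming from $\psi_w$ on different idempotents, this separates all basis contributions once the $E_1, E_2$ part is controlled.

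The only genuinely new point compared to the $W(n,1)$ case is the $E_1^{a_{-1}} E_2^{a_{-2}}$ factor, and this is therefore where the main work sits. Since $E_1$ and $E_2$ act as the two elementary symmetric polynomials in the pair of ``merged'' variables on the thick red strand, they are algebraically independent and their monomials $E_1^{a_{-1}} E_2^{a_{-2}}$ multiply into $V_{n,j'}^p$ as linearly independent elements of $R$, independent of the $R$-contributions indexed by $(a_1,\dots,a_n)$ (one can separate them, for instance, using the bigrading given by the total $y$-degree on the merged strand versus the remaining degrees). This algebraic independence, together with the separation of $y_{j'}$-monomials by the degree constraint on $c_j$, gives linear independence of the basis elements as operators on $V_n^p$, and hence faithfulness.
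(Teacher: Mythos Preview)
Your approach is correct and matches the paper's own treatment: the paper gives no explicit proof, treating the corollary as immediate from the proof of Proposition~\ref{subbasisprop} (itself only sketched as parallel to Proposition~\ref{basisprop}), where linear independence of the basis is established precisely by showing the basis elements act linearly independently on $V_n^p$. Your identification of the $E_1,E_2$ factor as the only new ingredient, and your observation that $x_p+x_{p+1},\,x_px_{p+1}$ together with the remaining $x_\ell$'s are algebraically independent generators of $R^{\langle s_p\rangle}$, is exactly the point that makes the argument go through; the bigrading remark at the end is a bit vague, but the algebraic-independence statement already suffices.
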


\begin{prop}
There is an injective homomorphism $\rho_p \colon W^p(n,1) \rightarrow W(n,1)$.
\end{prop}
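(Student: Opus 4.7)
The plan is to transcribe the pictorial intuition ``a thick red strand equals two adjacent thin red strands'' into explicit formulas on generators, verify that every defining relation of $W^p(n,1)$ is satisfied in $W(n,1)$ by the images, and finally deduce injectivity from the faithfulness of $V_n^p$. For each ${\bf i}\in\textrm{Seq}_p$, let $q$ denote the position of the unique $\R$ entry (so $q=p$ or $p+1$, depending on whether the $\b$ lies to the right or to the left of the $\R$), and let ${\bf i}'\in\textrm{Seq}$ be the sequence of length $n+1$ obtained by replacing the $\R$ at position $q$ with two $\r$'s at positions $q,q+1$ and shifting the remaining entries one step to the right. Define $\rho_p$ on generators by $e({\bf i})\mapsto e({\bf i}')$, $E_1e({\bf i})\mapsto(y_q+y_{q+1})e({\bf i}')$, $E_2e({\bf i})\mapsto y_qy_{q+1}e({\bf i}')$, $y_je({\bf i})\mapsto y_je({\bf i}')$ for $j<q$ and $y_je({\bf i})\mapsto y_{j+1}e({\bf i}')$ for $j>q$ (no rule is needed at $j=q$ since $y_qe({\bf i})=0$ by relation (3) of $W^p$), and
\[
\rho_p(\psi_je({\bf i}))=\begin{cases}\psi_je({\bf i}')&\text{if }j\le q-2,\\ \psi_q\psi_{q-1}e({\bf i}')&\text{if }j=q-1,\\ \psi_q\psi_{q+1}e({\bf i}')&\text{if }j=q,\\ \psi_{j+1}e({\bf i}')&\text{if }j\ge q+1.\end{cases}
\]

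The verification that $\rho_p$ respects every defining relation proceeds by case analysis on how many of the indices involved sit at the $\R$ position. Relations avoiding $\R$ translate verbatim from $W(n,1)$. The remaining families reduce to short computations in $W(n,1)$. Centrality of $E_1,E_2$ amounts to showing that the elementary symmetric functions $y_q+y_{q+1}$ and $y_qy_{q+1}$ commute with every $\rho_p(\psi_j)$; for $\psi_j$ not touching positions $q,q+1$ this is clear, while for $\rho_p(\psi_{q-1})=\psi_q\psi_{q-1}$ and $\rho_p(\psi_q)=\psi_q\psi_{q+1}$ it follows from applying the bilinear relations (8)--(9) of $W(n,1)$ twice, so that the inhomogeneous $\delta$-corrections cancel in the symmetric combination. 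The quadratic relation with $({\bf i}_j,{\bf i}_{j+1})=(\R,\b)$ expands under $\rho_p$ to $\psi_{j+1}\psi_j^2\psi_{j+1}e({\bf i}')$; applying $\psi_j^2e(\r,\b,\r)=(-y_j+y_{j+1})e(\r,\b,\r)$ and sliding the outer $\psi_{j+1}$'s gives $(-y_j+y_{j+2})(-y_{j+1}+y_{j+2})e({\bf i}')$, which equals $\rho_p(E_2-E_1y_{j+1}+y_{j+1}^2)e({\bf i})$; the mirror case $(\b,\R)$ is analogous. The cubic $\b\R\b$ relation unpacks into a six-crossing word in $W(n,1)$ which, after repeatedly invoking (\ref{mm+1}) and (\ref{reid3nu}) to resolve the inner triple crossings, collapses to $(-y_j+(y_q+y_{q+1})-y_{j+2})e({\bf i}')$, exactly the image of $-y_i+E_1-y_{i+2}$. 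The cyclotomic relation is trivial, since ${\bf i}_1=\b$ forces ${\bf i}'_1=\b$.

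For injectivity, use the faithful representation $V_n^p$ guaranteed by the corollary preceding this proposition. Identify $V_n^p$ with the subspace $\bigoplus_{i\neq p,p+1}V_{n,i}\subset V_n$ via $V_{n,i}^p\leftrightarrow V_{n,i}$ for $i<p$ and $V_{n,i}^p\leftrightarrow V_{n,i+1}$ for $i>p$; a direct check on generators, using the explicit formulas for both actions in Section \ref{repofW(n,1)} and Section \ref{specialsubalgebra}, shows that $\rho_p(a)\in W(n,1)$ acts on this subspace exactly as $a\in W^p(n,1)$ acts on $V_n^p$. Any $a$ with $\rho_p(a)=0$ thus annihilates $V_n^p$, so $a=0$. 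The main obstacle is the cubic verification above, where one must carefully extract the single divided-difference correction $-y_j+E_1-y_{j+2}$ characteristic of $W^p(n,1)$ from the two $W(n,1)$ triple-crossing identities produced by unpacking $\R$ into a pair of thin red strands.
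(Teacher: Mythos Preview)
Your definition of $\rho_p$ is the same as the paper's: both transcribe ``thick red strand $=$ two thin red strands'' into formulas on generators, and your explicit expressions agree with those the paper writes down (your $q$-dependent formula for $E_1,E_2$ is in fact the correct one; the paper's $(y_p+y_{p+1})$ in the case $i\le p$ appears to be a typo for $(y_{p+1}+y_{p+2})$). The paper does not verify the relations at all and simply asserts ``the injectivity of $\rho_p$ is clear,'' so your relation checks and your injectivity argument via the faithful module $V_n^p$ add genuine content rather than diverging in method.

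One small indexing slip in your injectivity step: the case $i=p$ is missing from your identification. When $\b$ sits at position $p$ the $\R$ is at position $p+1$, so $\rho_p(e_{p,p})=e_p$ and the corresponding summand is $V_{n,p}$. Thus the image of $V_n^p$ inside $V_n$ should be $\bigoplus_{j\neq p+1}V_{n,j}$, not $\bigoplus_{j\neq p,\,p+1}V_{n,j}$, and your bijection should read $i\le p$ (not $i<p$). With that correction your compatibility check on generators and the faithfulness of $V_n^p$ give injectivity as you claim.
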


\begin{proof}
Define the homomorphism on idempotents by
\begin{equation*}
\rho_p(e_{i,p}) =
\begin{cases}
e_i & \text{ if } i \leq p \\
e_{i+1} & \text{ if } i >p.
\end{cases}
\end{equation*}
On "dot" generators define it by
\begin{equation*}
\rho_p(y_j e_{i,p}) =
\begin{cases}
y_j e_{i+1} & \text{ if } i > p, j < p \\
y_{j+1} e_{i+1} & \text{ if } i >p, j>p \\
y_j e_{i} & \text{ if } i \leq p, j < p+1 \\
y_{j+1} e_{i} & \text{ if } i \leq p, j > p+1
\end{cases}
\end{equation*}
\begin{equation*}
\rho_p(E_1 e_{i,p}) =
\begin{cases}
(y_p+y_{p+1})e_{i+1} & \text{ if } i > p \\
(y_p+y_{p+1})e_i & \text{ if } i \leq p \\
\end{cases}
\end{equation*}
\begin{equation*}
\rho_p(E_2 e_{i,p}) =
\begin{cases}
(y_py_{p+1})e_{i+1} & \text{ if } i > p \\
(y_py_{p+1})e_i & \text{ if } i \leq p.
\end{cases}
\end{equation*}
On "crossing" generators
\begin{equation*}
\rho_p(\psi_i e_{i,p}) =
\begin{cases}
\psi_{i+1} e_{i+1} & \text{ if } i > p \\
\psi_{i} e_{i} & \text{ if } i < p-1
\end{cases}
\end{equation*}
\begin{equation*}
\rho_p(\psi_{p-1} e_{p-1,p})= \psi_p \psi_{p-1} e_{p-1}
\end{equation*}
\begin{equation*}
\rho_p(\psi_{i-1} e_{i,p}) =
\begin{cases}
\psi_{i} e_{i+1} & \text{ if } i > p+1 \\
\psi_{i-1} e_{i} & \text{ if } i < p
\end{cases}
\end{equation*}
\begin{equation*}
\rho_p(\psi_{p+1} e_{p+1,p})= \psi_p \psi_{p+1} e_{p+2}.
\end{equation*}

The injectivity of $\rho_p$ is clear.
\end{proof}

%\begin{prop}
%As a left module over $W^j(n,1)$, the algebra $W(n,1)$ is generated by
%\end{prop}

\section{Singular Soergel bimodules}
\label{sectionsingsoerg}
\subsection{Definitions}

Let $ I \subset \{1, \ldots, n-1\}$ and $S_I $ the subgroup of $S_n$ generated by
$s_i$ for $ i \in I$.  
We abbreviate the set $\{1, \ldots, k-1, k+1, \ldots, n-1\} $ by $\hat{k}$.
Denote by $R^{I}$ the ring of invariants of $R$ under the action of $S_I$.

The category of singular Soergel bimodules ${}^I \mathcal{R}^J$ is the smallest full subcategory of $(R^I,R^J)-\gmod$ which contains all objects isomorphic to direct summands of shifts of bimodules of the form 
\begin{equation*}
R^{I_1} \otimes_{R^{J_1}} R^{I_2} \cdots \otimes_{R^{J_{n-1}}} R^{I_n}
\end{equation*}
where
\begin{equation*}
I=I_1 \subset J_1 \supset I_2 \subset  \cdots \subset J_{n-1} \supset I_n = J.
\end{equation*}

\subsection{Results}

The indecomposable objects of ${}^I \mathcal{R}^J$ were classified by Williamson ~\cite{Wi} generalizing Soergel's result for the case $I=J=\emptyset$ which we call the "regular" case.  Theorem ~\ref{indecomsoergelconstruction} is a special case of Williamson's result.

\begin{theorem}
\cite[Theorem 7.4.2]{Wi}
\label{indecomsoergelconstruction}
Let $I= \emptyset$ and $J=\{1, \ldots, k-1,k+1, \ldots, n-1 \}$.  
For each $ c \in S_n / S_J$ there is an indecomposable self-dual singular Soergel bimodule ${}^{\emptyset} B_c^J$ where the duality functor is given by
\begin{equation*}
D(M)=\Hom_{{}^{\emptyset} \mathcal{R}^{\hat{k}}}(M, R \langle -k(k-1)-(n-k)(n-k-1) \rangle).
\end{equation*}
Furthermore, each indecomposable singular Soergel bimodule is isomorphic to a ${}^{\emptyset} B_c^J$ up to isomorphism and grading shift.
\end{theorem}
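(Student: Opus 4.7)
The plan is to adapt Soergel's original strategy for the regular case to the singular setting. First I would observe that ${}^{\emptyset} \mathcal{R}^{\hat{k}}$ is by definition generated (as an idempotent-completed additive category) by Bott--Samelson type bimodules $R^{I_1} \otimes_{R^{J_1}} R^{I_2} \otimes \cdots \otimes_{R^{J_{n-1}}} R^{I_n}$, and argue that ${}^{\emptyset} \mathcal{R}^{\hat{k}}$ is Krull--Schmidt, so that every object decomposes essentially uniquely into indecomposables. It then suffices to produce one indecomposable per coset $c \in S_n/S_J$ and to show these exhaust the indecomposables up to grading shift.

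The candidates $B_c^J$ are constructed by induction on the length of minimal coset representatives in $S_n/S_J$. The base case $c = e$ is the bimodule $R^{\hat{k}}$ itself, regarded as an $(R,R^{\hat{k}})$-bimodule. For the inductive step, if $c = s_i c'$ with $\ell(c) > \ell(c')$, one forms the tensor product $(R \otimes_{R^{s_i}} R) \otimes_R B_{c'}^J$ and singles out $B_c^J$ as the unique indecomposable summand whose class in the Hecke $S_n/S_J$-module has as leading term (in the Bruhat order) the singular Kazhdan--Lusztig basis element indexed by $c$. The summands lying over strictly smaller cosets are, by induction, grading shifts of $B_{c''}^J$ for $c'' < c$.

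The technical heart of the argument, and the step I expect to be the main obstacle, is the singular Soergel Hom formula: for any two Bott--Samelson objects $BS$, $BS'$, the graded bimodule $\Hom_{{}^{\emptyset}\mathcal{R}^{\hat{k}}}(BS, BS')$ should be graded free over the relevant invariant ring of rank equal to a bilinear pairing of their classes in the Hecke $(S_n,S_n/S_J)$-bimodule. This formula simultaneously delivers (i) indecomposability of each $B_c^J$, because its endomorphism ring is then graded local; (ii) pairwise non-isomorphism of the $B_c^J$ across distinct cosets $c$; and (iii) completeness, since any indecomposable summand of a Bott--Samelson bimodule must have a minimum $c$ in the support of its Hecke class, and comparing Hom-pairings forces it to be $B_c^J$. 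The Hom formula itself is reduced to Soergel's regular-case formula via induction/restriction adjunctions along the inclusions $R^{I'} \hookrightarrow R^I$ for $I \subset I'$, keeping careful track of the degree shifts produced by the Frobenius structure on each such extension.

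Finally, self-duality with respect to $D(M) = \Hom_{{}^{\emptyset}\mathcal{R}^{\hat{k}}}(M, R\langle -k(k-1)-(n-k)(n-k-1)\rangle)$ is obtained as follows. The normalizing shift is precisely the total degree needed so that $D$ restricts to a contravariant involutive auto-equivalence of ${}^{\emptyset}\mathcal{R}^{\hat{k}}$ that fixes each Bott--Samelson bimodule class in the Hecke bimodule; hence $D$ permutes the set of indecomposables while preserving leading Hecke terms. Since the leading term uniquely determines $B_c^J$ among indecomposables, one concludes $D(B_c^J) \cong B_c^J$.
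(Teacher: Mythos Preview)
The paper does not prove this theorem at all: it is quoted verbatim as \cite[Theorem 7.4.2]{Wi} and used as a black box. There is therefore no ``paper's own proof'' to compare your proposal against. What the paper does do, for the very special case $k=1$, is bypass Williamson's abstract classification entirely by writing down explicit $(R,R^{\hat 1})$-bimodules $P_i = R[y]/\prod_{j\le i+1}(y-x_j)\langle -i\rangle$ and checking directly that each $P_i$ is indecomposable, has a standard filtration, and arises inductively as a summand of $B_{i+1}\otimes_R P_i$.

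Your sketch is a fair high-level summary of Williamson's actual strategy: Krull--Schmidt, inductive construction of one indecomposable per coset via tensoring with $B_i$ and peeling off the top summand, the singular Hom formula as the engine, and self-duality via the Hecke-module character. As a proof outline it is reasonable, though each of the steps you flag (especially reducing the singular Hom formula to the regular one via the Frobenius adjunctions, and the precise bookkeeping of the shift $-k(k-1)-(n-k)(n-k-1)$) is substantial and not something one can simply assert. If you were asked to supply a proof here, the honest answer is that this is Williamson's theorem and one should cite it; your outline is not a proof but a plausible roadmap to one.
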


An important role in the work of Soergel and Williamson is played by standard bimodules.  For each coset $c \in S_n / S_{J}$ there is the standard bimodule $R_c$.  As a left $R$-module $R_c =R$.  The right action of $R$ is twisted.  Let $c_-$ be the minimal length element in $c$.  For $m \in R_c$ and $r \in R$ define
\begin{equation*}
m . r= m(c_- .r).
\end{equation*}

For the special case $J=\hat{1}=\{2,\ldots, n-1 \}$ we will denote
$ {}^{\emptyset} B_{s_i \cdots s_1}^J $ by $ P_{i}$.  By definition $P_0 = {}^{\emptyset} B_{e}^J \cong R^J$.

We will make heavy use of special bimodules $B_i= R \otimes_{R^{\{i\}}} R \langle -1 \rangle$.

\begin{prop}
\label{standardfiltBi}
There is a short exact sequence of $(R,R)$-bimodules
\begin{equation*}
0 \rightarrow R_{}  \rightarrow B_i  \langle -1 \rangle \rightarrow R_{s_i} \rightarrow 0.
\end{equation*}
\end{prop}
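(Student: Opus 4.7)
The plan is to produce the two bimodule maps by hand and then verify exactness directly. For the injection $\iota\colon R \to B_i\langle -1\rangle$, I would set $\iota(1) = x_i \otimes 1 - 1 \otimes x_{i+1}$ in $R \otimes_{R^{\{i\}}} R$. To see this extends to a well-defined bimodule homomorphism, the essential check is that $r \cdot (x_i \otimes 1 - 1 \otimes x_{i+1}) = (x_i \otimes 1 - 1 \otimes x_{i+1}) \cdot r$ for every $r \in R$. The claim is trivial for $r = x_j$ with $j \notin \{i,i+1\}$, and for $r = x_i$ or $r = x_{i+1}$ it follows from the observation that the elementary symmetric polynomials $x_i + x_{i+1}$ and $x_i x_{i+1}$ lie in $R^{\{i\}}$ and therefore slide across the tensor symbol. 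The element $x_i \otimes 1 - 1 \otimes x_{i+1}$ sits in degree $0$ of $B_i\langle -1\rangle$, so $\iota$ is degree-preserving, and it is injective because $R \otimes_{R^{\{i\}}} R$ is free of rank $2$ as a left $R$-module.

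For the surjection $\pi\colon B_i\langle -1\rangle \to R_{s_i}$ I would take $\pi(a \otimes b) = a \cdot s_i(b)$, viewing the codomain as $R$ with the $s_i$-twisted right action. Well-definedness over $\otimes_{R^{\{i\}}}$ uses that $s_i$ fixes $R^{\{i\}}$ pointwise; $\pi$ is left $R$-linear by construction, and it is right $R$-linear precisely because of the $s_i$-twist built into the right action on $R_{s_i}$. The composition $\pi \circ \iota$ sends $1$ to $x_i - s_i(x_{i+1}) = x_i - x_i = 0$, and $\pi$ is surjective because $\pi(1 \otimes 1)$ is a generator of $R_{s_i}$.

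Exactness in the middle is then a short computation in the free left $R$-module basis $\{1 \otimes 1,\; 1 \otimes x_i\}$ of $B_i\langle -1\rangle$. Rewriting $\iota(r) = r\cdot (1 \otimes x_i) - r x_{i+1}\cdot (1 \otimes 1)$, and rewriting the condition $\pi\bigl(a\cdot (1 \otimes 1) + b\cdot (1 \otimes x_i)\bigr) = 0$ as $a + b x_{i+1} = 0$, exhibits both $\mathrm{im}(\iota)$ and $\ker(\pi)$ as the same rank-one free left $R$-submodule. Alternatively, once $\iota$ is injective and $\pi$ is surjective with $\pi\iota = 0$, matching graded dimensions $\gdim B_i\langle -1\rangle = \gdim R + \gdim R_{s_i}$ (up to the appropriate shift) forces middle exactness.

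The algebraic content is entirely standard; the only real obstacle is the bookkeeping of grading shifts. Under the paper's convention $(M\langle r\rangle)_i = M_{i-r}$ together with $B_i = R \otimes_{R^{\{i\}}} R\langle -1\rangle$, the element $1 \otimes 1$ lives in degree $-2$ of $B_i\langle -1\rangle$, so one has to take care that $\iota$ and $\pi$ land in the stated shifted summands, and a compensating grading shift on $R_{s_i}$ may need to be absorbed depending on how standard bimodules are normalised.
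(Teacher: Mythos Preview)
Your proof is correct and follows essentially the same approach as the paper: the paper also takes the subbimodule generated by $v = x_i \otimes 1 - 1 \otimes x_{i+1}$, checks $x_j \cdot v = v \cdot x_j$ to identify it with $R$, and then verifies the $s_i$-twist on the quotient. Your version is more explicit (you write down $\pi(a\otimes b)=a\,s_i(b)$ and do the basis computation for middle exactness), and your closing caution about the grading shift is well taken---indeed the paper's own Section~\ref{subsectionw(2,1)} records the quotient as $R_{s_1}\langle -1\rangle$ (equivalently $R_{s_i}\langle -2\rangle$ after the extra shift), so the normalisation in the proposition statement is slightly off.
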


\begin{proof}
Let $A$ be the $(R,R)$-subbimodule of $B_i  \langle -1 \rangle$ generated by $v=x_i \otimes 1 - 1 \otimes x_{i+1}$.
It is easy to check that $x_j.v=v.x_j$ for $j=1,\ldots,n$.
Thus $A \cong R$.

In the quotient $B_i / A$ we have 
\begin{equation*}
x_i . (1 \otimes 1) - (1 \otimes 1).x_{i+1}=x_i \otimes 1- 1 \otimes x_{i+1}=0.
\end{equation*}
\end{proof}

\begin{prop}
\label{regularindecomposable}
The regular Soergel $(R,R)$-bimodule $ B_i \otimes_R B_{i-1} \otimes_R \cdots \otimes_R B_1$ is indecomposable.
\end{prop}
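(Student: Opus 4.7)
The plan is to argue by induction on $i$. The base case $i=1$ reduces to the classical fact that $B_1 = R\otimes_{R^{s_1}}R\langle -1\rangle$ is the indecomposable Soergel bimodule associated to the simple reflection $s_1$. Set $BS_i := B_i\otimes_R B_{i-1}\otimes_R\cdots\otimes_R B_1$; the goal is to show that $BS_i$ is indecomposable for all $i$.

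For the inductive step, I would iterate the short exact sequence of Proposition~\ref{standardfiltBi} across all tensor factors to equip $BS_i$ with a filtration by $(R,R)$-subbimodules whose successive subquotients are standard bimodules $R_{w_S}\langle d_S\rangle$, indexed by subsets $S\subseteq\{1,\dots,i\}$. Here $w_S := \prod_{j\in S}s_j$ is taken in decreasing order of the index $j$, and the full subset $S=\{1,\dots,i\}$ contributes $R_w\langle -i\rangle$ with $w:=s_is_{i-1}\cdots s_1$. Because $w$ is a $321$-avoiding (hence fully commutative) permutation, all other $w_S$ lie strictly below $w$ in the Bruhat order, and $R_w\langle -i\rangle$ appears with multiplicity exactly one in this filtration.

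By Soergel's classification, $BS_i$ decomposes as a direct sum of shifts of indecomposable Soergel bimodules $B_u$. Each such $B_u$ admits a standard filtration in which $R_u\langle -\ell(u)\rangle$ appears precisely once as the unique Bruhat-maximal subquotient. Since $R_w\langle -i\rangle$ appears exactly once in the standard filtration of $BS_i$, exactly one direct summand isomorphic to $B_w$ must occur. To exclude any further summands $B_u$ with $u<w$, I would compare graded characters in the Hecke algebra: for the fully commutative element $w$, the product $b_{s_i}b_{s_{i-1}}\cdots b_{s_1}$ in the Kazhdan--Lusztig basis equals $b_w$ with no lower-order correction terms, since no braid relation $s_ks_{k+1}s_k$ applies during any of the successive multiplications.

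The main obstacle is verifying this last Hecke algebra identity, but it can be established inductively: one shows $b_{s_j}\cdot b_{s_{j-1}}\cdots b_{s_1}=b_{s_js_{j-1}\cdots s_1}$ at each step. Because $\ell(s_j\cdot s_{j-1}\cdots s_1)=\ell(s_{j-1}\cdots s_1)+1$ and the prefix $s_{j-1}\cdots s_1$ contains no occurrence of $s_j$, the standard multiplication rule $b_sb_w = b_{sw} + \sum_v \mu(v,w)b_v$ produces precisely the top term, with all $\mu$-coefficients from below vanishing. Combining this with the standard filtration count completes the induction.
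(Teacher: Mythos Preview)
Your approach shares the key Hecke-algebra computation with the paper but packages the conclusion differently, and there is a genuine gap in your final step.

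The paper argues via Soergel's Hom formula: $\gdim\End(BS_i)$ is given by a pairing in the Hecke algebra, and a short computation there (essentially the same one you carry out, showing $b_{s_i}b_{s_{i-1}}\cdots b_{s_1}=b_w$) yields that the degree-zero part of $\End(BS_i)$ is one-dimensional, hence $BS_i$ is indecomposable. This route uses only the Hom formula for Bott--Samelson bimodules, which is unconditional, and never needs to know the classes $[B_u]$ of the individual indecomposables.

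Your argument, by contrast, tries to read indecomposability off the character. You correctly obtain $[BS_i]=b_{s_i}\cdots b_{s_1}=b_w$ and $BS_i\cong B_w\oplus\bigoplus_{u<w}B_u^{\oplus m_u}$, and your $\mu$-coefficient argument is right: any $v\leq s_{j-1}\cdots s_1$ lies in the parabolic $\langle s_1,\dots,s_{j-1}\rangle$, so $s_jv>v$ and the sum in $b_{s_j}b_{s_{j-1}\cdots s_1}=b_{s_j\cdots s_1}+\sum\mu(v,\cdot)b_v$ is empty. But to pass from $b_w=[B_w]+\sum_{u<w}m_u[B_u]$ to $m_u=0$ you need $[B_w]=b_w$, which is exactly Soergel's conjecture (the Elias--Williamson theorem). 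Soergel's unconditional results only give that the $[B_u]$ are upper-triangular with respect to Bruhat order in the standard basis, and upper-triangularity alone does not force the lower $m_u$ to vanish; nor does your induction help, since it establishes $[B_{s_{j-1}\cdots s_1}]=b_{s_{j-1}\cdots s_1}$ only for the specific prefix elements, not for all $u<w$. You can close the gap either by citing Elias--Williamson explicitly, or---more economically---by feeding the same Hecke identity into the Hom formula, which is the paper's one-line argument.
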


\begin{proof}
From Soergel's homomorphism formula and a computation in the Hecke algebra, it follows that the endomorphism algebra of this bimodule is one-dimensional in degree zero implying that the bimodule is indecomposable.
\end{proof}

We will now focus on the case $I=\emptyset$ and $J=\{2,\ldots,n-1\}$.
%Define $X_0=R$ and for $1 \leq i \leq n$ 
%\begin{align*}
%X_1 &= R \\
%X_i &= B_i \otimes_R B_{i-1} \otimes_R \cdots \otimes_R B_{1} \hspace{.5in} 
%\textrm{ for } 2 \leq i \leq n
%\end{align*}
%viewed as $(R,R^{\hat{1}})$-bimodules.

\begin{lemma}
\label{restrictingstandard}
For $i \geq 2$, the $(R,R)$-bimodule $R_{s_i}$ is isomorphic to $R$ view as an $(R,R^{\hat{1}})$-bimodule.
\end{lemma}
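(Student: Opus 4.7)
The plan is to observe that the entire content of the statement is the fact that for $i \geq 2$, the simple transposition $s_i$ lies in the parabolic subgroup $S_{\hat{1}} = \langle s_2, \ldots, s_{n-1} \rangle$, since $\hat{1} = \{2, \ldots, n-1\}$. Consequently $s_i$ acts trivially on the invariant ring $R^{\hat{1}}$: for every $r \in R^{\hat{1}}$ we have $s_i \cdot r = r$.

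With this in hand, I would exhibit the desired isomorphism as the identity map $\varphi \colon R_{s_i} \to R$ on the common underlying set. It is an isomorphism of left $R$-modules by construction, so it only remains to verify right $R^{\hat{1}}$-linearity. For $m \in R_{s_i}$ and $r \in R^{\hat{1}}$ the twisted right action on $R_{s_i}$ reads $m \cdot r = m(s_i \cdot r)$, and by the observation above this equals $mr$, the ordinary right action. Hence $\varphi(m \cdot r) = m r = \varphi(m) \cdot r$, so $\varphi$ is a bimodule map, and being the identity on the underlying set, it is a bijection.

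There is essentially no obstacle here; the lemma is a direct consequence of the combinatorial fact $s_i \in S_{\hat{1}}$ for $i \geq 2$ together with the definition of the twisted right action of the standard bimodule $R_{s_i}$, where the minimal length coset representative of $s_i$ in $S_n / S_{\hat{1}}$ is the identity. In particular, one does not need to invoke Proposition \ref{standardfiltBi} or any structural result about Soergel bimodules; the argument is purely formal.
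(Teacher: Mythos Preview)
Your proof is correct and is essentially the same argument as the paper's: both reduce to the observation that for $i \geq 2$ the transposition $s_i$ lies in $S_{\hat{1}}$ and hence fixes every element of $R^{\hat{1}}$, so the twisted and untwisted right actions agree. Your aside about the minimal coset representative is not needed (the $(R,R)$-bimodule $R_{s_i}$ is twisted by $s_i$ itself, not by a coset representative), but it does no harm.
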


\begin{proof}
If $f$ is a symmetric function in variables $x_2,\ldots,x_{n}$ then $s_i . f =f$.
Thus as a right $R^{\hat{1}}$-module there is no twisting and $R_{s_i} \cong R$.
\end{proof}

\begin{remark}
By Lemma ~\ref{restrictingstandard}, the $(R,R)$-bimodule $R_{s_i}$ for $i\geq 2$ restricts as an $(R,R^{\hat{1}})$-bimodule to $R$.  Thus
for $i \geq 2$ the exact sequence of $(R,R)$-bimodules
\begin{equation*}
0 \rightarrow R_{}  \rightarrow B_i \langle -1 \rangle \rightarrow R_{s_i} \rightarrow 0
\end{equation*}
gives a split exact sequence of $(R,R^{\hat{1}})$-bimodules 
since by ~\cite{Wi} there are no self-extensions of $R$.
\end{remark}

%%%%%%%%%%%
%%%THIS IS NOT NEEDED AND TRUE IF CHARACTERISTIC FIELD IS 0 BY SOERGEL CONJECTURE.  I THINK OVER FIELD USED HERE IT IS TRUE TOO BUT REQUIRES MORE WORK%%%%%%%

%The next proposition will not be needed in what follows.  Its proof could be reduced to a calculation in the Hecke algebra via the character map and its properties developed in ~\cite{Soe} and ~\cite{Wi} along with the Soergel Conjecture (Theorem) ~\cite{EW}.
%In the Hecke algebra, the proof is a straightforward induction on $i$.

%\begin{prop}
%For $i \geq 2$, as an $(R,R^{\hat{1}})$-bimodule, $X_i$
%decomposes as a direct sum of shifts of an indecomposable bimodules $P_i$:
%\begin{equation*}
%X_i \cong P_i \oplus P_{i-2}^{\oplus [2]^0} \oplus P_{i-3}^{\oplus [2]^1} \oplus \cdots \oplus P_{0}^{\oplus [2]^{i-2}}.
%\end{equation*}
%By definition $X_1=P_1$ and $X_0=P_0$.
%\end{prop}

%%%%%%%%%%%%

We will now explicitly construct the $(R,R^{\hat{1}})$-bimodule $P_i$.  This is possible because we are working in the very special case of $J=\hat{1}=  \{2,\ldots,n-1\}$.
In the definitions and facts about the concrete $P_i$ which follow, we will ignore that the abstract $P_i$ satisfy the properties which we now list.
Define
\begin{equation}
\label{Pidef}
P_{i} = R[y]/(y-x_1)\cdots(y-x_{i+1}) \langle -i \rangle
\end{equation}
with the left $R$-module structure given by
\begin{equation*}
x_j . y^{r}=x_j y^{r}.
\end{equation*}
The right $R^{\hat{1}}$-module structure is more complicated to describe.
Set
\begin{equation*}
y^{r} . x_1= y^{r +1} 
\end{equation*}
and for $1 \leq j \leq n-1$
\begin{equation*}
y^{r} . E_j(x_2,\ldots,x_n) =y^{r}
(  (-1)^j y^j + (-1)^{j-1} E_1(x_1,\ldots,x_n) y^{j-1} + \cdots + (-1)^0E_j(x_1, \ldots, x_n) y^0).
\end{equation*}

As a $\Bbbk$-module, $P_i$ is graded with the degree of $y^{r}$ equal to $2r - i$.

\begin{prop}
$P_i$ is an indecomposable graded $(R,R^{\hat{1}})$-bimodule.
\end{prop}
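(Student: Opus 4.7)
The plan is to establish indecomposability by computing $\End^0_{(R,R^{\hat{1}})}(P_i) \cong \Bbbk$. Since $P_i$ is $\mathbb{Z}$-graded and idempotents in a graded endomorphism ring live in degree zero, a one-dimensional degree-zero part rules out non-trivial idempotents and forces $P_i$ to be indecomposable.

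The first step is to show that $P_i$ is cyclic as a bimodule, generated by the class of $1$. The crucial feature of the case $J = \hat{1}$ is that $x_1 \in R^{\hat{1}}$, and by the definition of the right action one has $y^r \cdot x_1 = y^{r+1}$; thus $y^r = 1 \cdot x_1^r$, and any element $\sum_{r=0}^{i} f_r(x_1,\ldots,x_n)\, y^r$ of $P_i$ rewrites as $\sum_r f_r \cdot 1 \cdot x_1^r$. Consequently a bimodule endomorphism $\varphi$ is completely determined by $\varphi(1)$.

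Next I would pin down $\varphi(1)$ via the grading. Since $1 = y^0$ has degree $-i$ in $P_i$, a degree-preserving $\varphi$ must send $1$ to an element of degree $-i$. A general homogeneous element of $P_i$ has the form $g_r y^r$ with $g_r \in R$ homogeneous of degree $2m_r$ and total degree $2m_r + 2r - i$. Setting this equal to $-i$ with $m_r, r \geq 0$ forces $r = 0$ and $m_r = 0$, so $\varphi(1) = c \in \Bbbk$. Every such scalar manifestly defines a valid bimodule endomorphism, so $\End^0(P_i) \cong \Bbbk$ and $P_i$ is indecomposable.

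I do not foresee a serious obstacle. The only delicate point is the cyclicity of $P_i$, which hinges on $x_1 \in R^{\hat{1}}$ together with the specific right action defining $1 \cdot x_1 = y$; this is a feature special to $J = \hat{1}$, and for more general parabolic subgroups the analogous singular Soergel bimodules would no longer be cyclic on the identity, and indecomposability would require a substantially more involved argument (e.g.\ via Soergel's $\Hom$-formula, as invoked in Proposition \ref{regularindecomposable}).
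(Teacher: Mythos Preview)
Your argument is correct and is essentially the paper's own proof, spelled out in full: the paper's one-line justification (``indecomposable because it is cyclically generated \ldots\ by a vector $1$'') is precisely the observation that $1$ is a cyclic bimodule generator sitting alone in the lowest degree, so any degree-zero endomorphism is a scalar. Your write-up makes explicit the two ingredients the paper leaves implicit --- that $x_1 \in R^{\hat 1}$ gives $1\cdot x_1^r = y^r$ (hence cyclicity), and that $(P_i)_{-i} = \Bbbk\cdot 1$ --- so there is nothing to add.
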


\begin{proof}
It is clear from the definition of $P_i$ that it is an $(R,R^{\hat{1}})$-bimodule.
It is indecomposable because it is cyclically generated in degree zero by a vector $1$.
\end{proof}

The next proposition is clear.

\begin{prop}
\label{sesforproj}
There is a short exact sequence of $(R,R^{\hat{1}})$-bimodules:
\begin{equation*}
0 \rightarrow R[y]/(y-x_1) \cdots (y-x_{i}) \langle 2 \rangle \rightarrow
R[y]/(y-x_1) \cdots (y-x_{i+1}) \rightarrow
R[y]/(y-x_{i+1}) \rightarrow 0
\end{equation*}
where the inclusion is given by mapping $ 1 \mapsto y-x_{i+1}$.
\end{prop}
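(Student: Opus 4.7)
The plan is to define $\phi$ as multiplication by $y - x_{i+1}$ and verify that it is an injective, grading-preserving $(R, R^{\hat{1}})$-bimodule homomorphism with cokernel isomorphic to $R[y]/(y - x_{i+1})$. Well-definedness is immediate from the factorization $(y - x_1) \cdots (y - x_{i+1}) = (y - x_1) \cdots (y - x_i) \cdot (y - x_{i+1})$, which shows that the image of $(y-x_1) \cdots (y-x_i) R[y]$ lands inside $(y-x_1) \cdots (y-x_{i+1}) R[y]$. The grading is preserved because the shift $\langle 2 \rangle$ puts the generator $1$ of the source in degree $2$, matching the degree of $y - x_{i+1}$ in the middle term. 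Left $R$-linearity is transparent, since the left action on both sides is ordinary multiplication by $x_j \in R$, which commutes with multiplication by $y - x_{i+1}$ in $R[y]$.

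The key verification is right $R^{\hat{1}}$-linearity. Both the source and target carry the same twisted right action, defined by $y^r \cdot x_1 = y^{r+1}$ and $y^r \cdot E_j(x_2, \ldots, x_n) = \sum_{k=0}^{j}(-1)^k E_{j-k}(x_1,\ldots,x_n) \, y^{r+k}$. Since multiplication by $y - x_{i+1}$ commutes with both left multiplication by $y$ and left multiplication by elements of $R$, a direct computation yields
\begin{equation*}
\phi\bigl(y^r \cdot E_j(x_2,\ldots,x_n)\bigr) = \sum_{k=0}^{j} (-1)^k E_{j-k}(x_1,\ldots,x_n)(y - x_{i+1}) y^{r+k} = \phi(y^r) \cdot E_j(x_2,\ldots,x_n),
\end{equation*}
and the analogous identity for $x_1$ is trivial.

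Finally, injectivity follows because $R[y]$ is a UFD in which $y - x_{i+1}$ is prime: if $(y - x_{i+1}) f(y) \in (y-x_1)\cdots(y-x_{i+1}) R[y]$, cancelling $y - x_{i+1}$ yields $f(y) \in (y-x_1)\cdots(y-x_i) R[y]$, so $f$ represents $0$ in the source. The cokernel equals
\begin{equation*}
R[y] \bigm/ \bigl( (y-x_1) \cdots (y-x_{i+1}), \, y - x_{i+1} \bigr) \;=\; R[y]/(y-x_{i+1})
\end{equation*}
with the inherited bimodule structure, matching the definition of the rightmost term. The only genuine obstacle is keeping track of the twisted right action, but once the formulas are written out, bilinearity reduces to commutativity of polynomial multiplication in $R[y]$.
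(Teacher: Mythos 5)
The paper supplies no argument for this proposition — it is introduced with ``The next proposition is clear'' — so your job was to fill in the details, and your verification is correct and is exactly the expected argument. The one point worth emphasizing (which you identify correctly) is that the twisted right $R^{\hat{1}}$-action on each of these cyclic $R[y]$-modules is implemented by multiplication by explicit elements of the commutative ring $R[y]$, namely $x_1 \mapsto y$ and $E_j(x_2,\ldots,x_n) \mapsto \sum_{k=0}^{j}(-1)^k E_{j-k}(x_1,\ldots,x_n)\,y^k$, and the formula is the same for every quotient $R[y]/(y-x_1)\cdots(y-x_m)$; since the connecting map is also multiplication by an element of $R[y]$, bimodule linearity reduces to commutativity. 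The grading check matches the paper's convention $(M\langle r\rangle)_j = M_{j-r}$, injectivity is clear from $R[y]$ being a domain (the UFD/prime framing is harmless overkill), and the cokernel identification is immediate since $(y-x_1)\cdots(y-x_{i+1})$ already lies in $(y-x_{i+1})$. No gaps.
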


\begin{corollary}
\label{standfiltration}
The module $P_i$ has a filtration
\begin{equation*}
M_0 \subset M_1 \subset \cdots \subset M_i=P_i
\end{equation*}
such that $M_j/M_{j-1}$ is isomorphic to the standard module $R^{\hat{1}}_{s_j \cdots s_1} \langle i-2j  \rangle$.
\end{corollary}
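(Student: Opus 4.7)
The plan is to build the filtration by iterating Proposition~\ref{sesforproj}, peeling off one standard-module quotient at each step. First I would observe that applying Proposition~\ref{sesforproj} to $P_m$ for each $m=1,\ldots,i$, and incorporating the grading shift in the definition of $P_m$, yields a short exact sequence of $(R,R^{\hat 1})$-bimodules
\begin{equation*}
0 \to P_{m-1}\langle 1\rangle \to P_m \to R[y]/(y-x_{m+1})\langle -m\rangle \to 0,
\end{equation*}
whose inclusion is multiplication by $y-x_{m+1}$. I would then define $M_m\subset P_i$ to be the image of the composite embedding
\begin{equation*}
P_m\langle i-m\rangle\hookrightarrow P_{m+1}\langle i-m-1\rangle\hookrightarrow\cdots\hookrightarrow P_i,
\end{equation*}
with $M_{-1}=0$, producing the chain $M_0\subset M_1\subset\cdots\subset M_i=P_i$. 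The successive quotients are then the shifted cokernels from the sequence above, namely $M_j/M_{j-1}\cong R[y]/(y-x_{j+1})\langle i-2j\rangle$.

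The heart of the argument is the bimodule identification of $R[y]/(y-x_{j+1})$ with the standard bimodule $R^{\hat 1}_{s_j\cdots s_1}$. The plan is to compare the right action of the generators $x_1$ and $E_k(x_2,\ldots,x_n)$ of $R^{\hat 1}$ on the class of $1$. Setting $y=x_{j+1}$ in the formulas defining the right action on $P_j$, one finds $1\cdot x_1 = x_{j+1}$, and, invoking the telescoping identity $E_k(x_1,\ldots,x_n)=E_k(x_1,\ldots,\hat{x}_{j+1},\ldots,x_n)+x_{j+1}E_{k-1}(x_1,\ldots,\hat{x}_{j+1},\ldots,x_n)$, one computes
\begin{equation*}
1\cdot E_k(x_2,\ldots,x_n) = E_k(x_1,\ldots,\hat{x}_{j+1},\ldots,x_n).
\end{equation*}
These match the twisted action on $R^{\hat 1}_{s_j\cdots s_1}$, since the minimal coset representative $s_j\cdots s_1$ sends $x_1$ to $x_{j+1}$ and cyclically shifts $x_2,\ldots,x_{j+1}$ down by one while fixing $x_{j+2},\ldots,x_n$.

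The main obstacle is this bimodule identification; the iterative filtration step itself is formal once each short exact sequence is in place. The key combinatorial input is the elementary symmetric polynomial identity above, and the shift bookkeeping works out since shifting $R[y]/(y-x_{m+1})\langle -m\rangle$ by the additional $\langle i-m\rangle$ produces the prescribed $\langle i-2m\rangle$.
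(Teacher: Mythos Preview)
Your proof is correct and follows essentially the same approach as the paper: both build the filtration by iterating Proposition~\ref{sesforproj} and then identify the rank-one quotient $R[y]/(y-x_{j+1})$ with the standard bimodule $R^{\hat 1}_{s_j\cdots s_1}$ via the elementary symmetric polynomial recursion. The only cosmetic difference is that the paper first applies the inverse permutation $s_1\cdots s_j$ to reduce the key identity to one involving $x_1$ rather than $x_{j+1}$, whereas you verify it directly for the omitted variable $x_{j+1}$; these are the same computation.
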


\begin{proof}
Using the exact sequence in Proposition ~\ref{sesforproj} we may inductively build the filtration of $P_i$.
The only non-trivial fact to check is that the quotient bimodule $R[y](y-x_{i+1})$ is isomorphic (up to a shift) to the standard bimodule $R^{\hat{1}}_{s_i \cdots s_1}$.

The two objects are clearly both isomorphic as left $R$-modules to $R$.
Now to check that the isomorphism is compatible with the right $R^{\hat{1}}$ action.

On the element $1$ in the quotient $R[y]/(y-x_{i+1})$ we have $1.x_1=x_{i+1}$.
On the element $1$ in the standard bimodule we have $1.{x_1}=1(s_{i} \cdots s_1.x_1)=x_{i+1}$.  Thus the isomorphism is compatible with the right action of $x_1$.

On the element $1$ in the quotient $R[y]/(y-x_{i+1})$ we have
\begin{equation}
\label{filtr1}
1.E_j(x_2, \ldots, x_n)=(-1)^j x_{i+1}^j +(-1)^{j-1} x_{i+1}^{j-1} E_1(x_1, \ldots, x_n)
+ \cdots + (-1)^0 x_{i+1}^0 E_j(x_1, \ldots, x_n).
\end{equation}
In the standard bimodule $E_j(x_2, \ldots, x_n)$ acts on $1$ by
\begin{equation}
\label{filtr2}
(s_i \cdots s_1).E_j(x_2, \ldots, x_n).
\end{equation}
Thus we have to check that the right hand sides of ~\eqref{filtr1} and ~\eqref{filtr2} are the same.  After applying $s_1 \cdots s_i$ to both of these quantities, we must only check in the polynomial ring $R$ that
\begin{equation}
\label{filtr3}
E_j(x_2, \ldots, x_n)=(-1)^j x_{1}^j +(-1)^{j-1} x_{1}^{j-1} E_1(x_1, \ldots, x_n)
+ \cdots + (-1)^0 x_{1}^0 E_j(x_1, \ldots, x_n).
\end{equation}
This easily follows by induction on $j$ using the fact that
\begin{equation*}
E_{j+1}(x_1, \ldots, x_n)=E_{j+1}(x_2, \ldots, x_n)+x_1 E_j(x_2, \ldots, x_n).
\end{equation*}
\end{proof}

\begin{ex}
\label{producingP_ibasecase}
We will show that as an $(R,R^{\hat{1}})$-bimodule 
\begin{equation*}
B_2 \otimes_R B_1 \cong B_2 \otimes_R P_1 \cong P_2 \oplus P_0.
\end{equation*}
There is an inclusion
\begin{equation*}
R \rightarrow B_2 \otimes_R B_1 \cong R \otimes_{R^{\{2\}}} R \otimes_{R^{\{1\}}} R  \langle -2 \rangle
\end{equation*}
given by
\begin{equation*}
1 \mapsto 1 \otimes x_1 \otimes 1 - 1 \otimes 1 \otimes x_2.
\end{equation*}
There is a splitting map 
\begin{equation*}
B_2 \otimes_R B_1 \cong R \otimes_{R^{\{2\}}} R \otimes_{R^{\{1\}}} R  \langle -2 \rangle  \rightarrow R
\end{equation*}
given by
\begin{equation*}
1 \otimes 1 \otimes 1 \mapsto 0 \hspace{.5in} 1 \otimes 1 \otimes x_2 \mapsto -1.
\end{equation*}
It is an easy exercise to check that this forces
\begin{equation*}
1 \otimes 1 \otimes x_1 \mapsto 0 \hspace{.5in} 1 \otimes 1 \otimes x_3 \mapsto 1.
\end{equation*}
\end{ex}

\begin{prop}
$P_i$ is an indecomposable Soergel $(R,R^{\hat{1}})$-bimodule.
\end{prop}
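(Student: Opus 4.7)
The previous proposition already gives indecomposability, so what remains is to exhibit $P_i$ as a direct summand of a bimodule of the form in the definition of ${}^{\emptyset}\mathcal{R}^{\hat 1}$. The plan is to induct on $i$ and show that $P_i$ is a direct summand of $B_i \otimes_R B_{i-1} \otimes_R \cdots \otimes_R B_1$, regarded as an $(R, R^{\hat 1})$-bimodule by restricting its right $R$-action. Viewed this way, the tensor product is of Soergel type via the chain
\[
\emptyset \subset \{i\} \supset \emptyset \subset \{i-1\} \supset \cdots \supset \emptyset \subset \{1\} \supset \emptyset \subset \hat 1 \supset \hat 1,
\]
so its direct summands are Soergel bimodules. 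The base cases $i = 0, 1, 2$ are immediate from $P_0 \cong R^{\hat 1}$; the identification $P_1 \cong B_1$ sending $1 \otimes 1 \mapsto 1$ and $1 \otimes x_1 \mapsto y$ (which is a well-defined $(R, R^{\hat 1})$-bimodule isomorphism by the identity $E_j(x_2, \ldots, x_n) = \sum_{k=0}^{j} (-x_1)^{j-k} E_k(x_1, \ldots, x_n)$); and Example \ref{producingP_ibasecase}.

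The inductive step reduces to proving the $(R, R^{\hat 1})$-bimodule isomorphism
\[
B_{i+1} \otimes_R P_i \;\cong\; P_{i+1} \oplus P_{i-1}.
\]
This recovers the Example when $i = 1$, and is consistent with graded ranks since both sides are free left $R$-modules of rank $2(i+1) = (i+2) + i$. To construct it, define $\iota : P_{i+1} \to B_{i+1} \otimes_R P_i$ on the cyclic generator by $1 \mapsto 1 \otimes 1 \otimes 1$; the degrees agree at $-(i+1)$. Well-definedness of $\iota$ amounts to checking that $1 \otimes 1 \otimes 1$ carries the twisted right $R^{\hat 1}$-action of $P_{i+1}$, which one verifies using the same symmetric-function identity as above to push right multiplications across $\otimes_{R^{\{i+1\}}}$. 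A complementary copy of $P_{i-1}$ is produced by an explicit element of $B_{i+1} \otimes_R P_i$ of degree $-(i-1)$, constructed in direct analogy with the splitting element $1 \otimes x_1 \otimes 1 - 1 \otimes 1 \otimes x_2$ in the Example. Surjectivity of the resulting map $P_{i+1} \oplus P_{i-1} \to B_{i+1} \otimes_R P_i$ follows by comparing graded Hilbert series, using the standard filtration of Corollary \ref{standfiltration}.

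The main obstacle is identifying the correct generator of the $P_{i-1}$ summand and verifying that the right $R^{\hat 1}$-action on it satisfies the defining relations of $P_{i-1}$. The Example solves the case $i = 1$ and provides the template, but the polynomial bookkeeping grows in complexity as $i$ increases because the right action on $P_j$ involves elementary symmetric functions that interact nontrivially with the relation $(y - x_1) \cdots (y - x_{j+1}) = 0$.
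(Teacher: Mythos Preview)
Your approach is essentially identical to the paper's: both argue by induction via the isomorphism $B_{i+1}\otimes_R P_i \cong P_{i+1}\oplus P_{i-1}$, with the same base cases and the same inclusion $1\mapsto 1\otimes 1$ for the $P_{i+1}$ summand. The difference is that the paper actually completes the step you flag as the main obstacle, and the answer is much simpler than you anticipate: the $P_{i-1}$ summand is generated by
\[
\alpha(1)=1\otimes(y-x_{i+1})\ \in\ R\otimes_{R^{\{i+1\}}} R[y]/(y-x_1)\cdots(y-x_{i+1}),
\]
with splitting map $\beta$ determined by $\beta(1\otimes 1)=0$ and $\beta(1\otimes x_{i+1})=-1$. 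The polynomial bookkeeping does not grow with $i$: multiplying by $y-x_{i+1}$ already kills the last factor of the defining relation of $P_i$ (compare Proposition~\ref{sesforproj}), so the well-definedness check for $\alpha$ is uniform in $i$. The paper then writes down the complementary projection $\delta$ onto $P_{i+1}$ and verifies that $(\alpha\ \gamma)$ and $\begin{pmatrix}\beta\\ \delta\end{pmatrix}$ are mutually inverse. Your proposed Hilbert-series comparison is not a substitute for this last step: equal graded ranks do not by themselves force a given map to be an isomorphism, so you would still need an independent argument for injectivity or surjectivity.
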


\begin{proof}
In order to prove it's a Soergel bimodule we will inductively prove
\begin{equation*}
B_{i+1} \otimes_R P_i \cong P_{i+1} \oplus P_{i-1}.
\end{equation*}
The case $i=0$ is provided by the isomorphism
\begin{equation*}
B_1 \cong R[y]/(y-x_1)(y-x_2) \langle -1 \rangle \hspace{.5in} 1 \otimes 1 \mapsto 1
\hspace{.5in} 1 \otimes x_2 \mapsto x_1+x_2-y.
\end{equation*}
It is also explained in Section ~\ref{subsectionw(2,1)}.
The case $i=1$ is given in Example ~\ref{producingP_ibasecase}.

Now let us assume that $P_i$ is a Soergel bimodule.  We will prove that $P_{i+1}$ and $P_{i-1}$  are the two indecomposable summands of 
\begin{equation*}
B_{i+1} \otimes_R P_i \cong R \otimes_{R^{i+1}} R[y]/(y-x_1)\cdots(y-x_{i+1}) \langle -i-1 \rangle.
\end{equation*}
The elements $1\otimes 1$ and $1 \otimes x_{i+1}$ generate 
$ R \otimes_{R^{i+1}} R[y]/(y-x_1)\cdots(y-x_{i+1})$ as an $(R,R^{\hat{1}})$-bimodule because $(1 \otimes 1).x_1=1 \otimes y$ and
for $j \neq i+1,i+2$, we have $x_j.(1 \otimes 1)=1 \otimes x_j$.
Finally we have 
\begin{equation*}
1 \otimes x_{i+2} =1 \otimes (x_{i+1}+x_{i+2}-x_{i+1}) 
= (x_{i+1}+x_{i+2}).(1 \otimes 1)-1 \otimes x_{i+1}.
\end{equation*}

First we will demonstrate that $P_{i-1} $ is a summand of $ B_{i+1} \otimes_R P_i$.
There is an inclusion map
\begin{equation*}
\alpha \colon P_{i-1}=R[y]/(y-x_1) \cdots (y-x_i) \langle -i+1 \rangle  \rightarrow R \otimes_{R^{i+1}} R[y]/(y-x_1)\cdots(y-x_{i+1}) \langle -i-1 \rangle
\end{equation*}
determined by 
\begin{equation*}
\alpha(1)=1 \otimes (y-x_{i+1}).
\end{equation*}

There is a map
\begin{equation*}
\beta \colon R \otimes_{R^{i+1}} R[y]/(y-x_1)\cdots(y-x_{i+1}) \langle -i-1 \rangle  \rightarrow P_{i-1}=R[y]/(y-x_1) \cdots (y-x_i) \langle -i+1 \rangle
\end{equation*}
determined by
\begin{equation*}
\beta(1 \otimes 1)=0 \hspace{.5in} \beta(1 \otimes x_{i+1})=-1.
\end{equation*}
Note that this forces for $j \neq i+1,i+2$
\begin{equation*}
\beta(1 \otimes y)=0 \hspace{.5in} \beta(1 \otimes x_{i+2})=1 \hspace{.5in} \beta(1 \otimes x_{j})=0.
\end{equation*}
It is clear that $\beta \circ \alpha = \Id$.

Next we will show that $P_{i+1} $ is a summand of $ B_{i+1} \otimes_R P_i$.
There is an inclusion map
\begin{equation*}
\gamma \colon P_{i+1}=R[y]/(y-x_1) \cdots (y-x_{i+2}) \langle -i-1 \rangle \rightarrow R \otimes_{R^{i+1}} R[y]/(y-x_1)\cdots(y-x_{i+1}) \langle -i-1 \rangle
\end{equation*}
determined by 
\begin{equation*}
\gamma(1)=1 \otimes1.
\end{equation*}

There is a map
\begin{equation*}
\delta \colon R \otimes_{R^{i+1}} R[y]/(y-x_1)\cdots(y-x_{i+1})  \langle -i-1 \rangle \rightarrow P_{i+1}=R[y]/(y-x_1) \cdots (y-x_{i+2})  \langle -i-1 \rangle
\end{equation*}
determined by
\begin{equation*}
\delta(1 \otimes 1)=1 \hspace{.5in} \delta(1 \otimes x_{i+1})=y.
\end{equation*}
Note that this forces
\begin{equation*}
\delta(1 \otimes y)=y \hspace{.5in} \delta(1 \otimes x_{i+2})=x_{i+1}+x_{i+2}-y \hspace{.5in} \delta(1 \otimes x_{j})=x_j
\end{equation*}
for $j \neq i+1,i+2$.
It is clear that $\delta \circ \gamma = \Id$.

Finally, it is straightforward to check that
\[
\xymatrix{
R \otimes_{R^{i+1}} R[y]/(y-x_1)\cdots(y-x_{i+1})  \langle -i-1 \rangle \ar[rr]^{\hspace{.9in} \begin{pmatrix}  \beta \\ \delta \end{pmatrix}}
&& P_{i-1} \oplus P_{i+1}
}
\]
and
\[
\xymatrix{
P_{i-1} \oplus P_{i+1}  \ar[rr]^{ \begin{pmatrix}  \alpha & \gamma \end{pmatrix}\hspace{1.1in}} &&
R \otimes_{R^{i+1}} R[y]/(y-x_1)\cdots(y-x_{i+1})  \langle -i-1 \rangle
}
\]
are inverse maps of each other.

%Assuming that it's a Soergel bimodule, one sees from the standard filtration in Corollary ~\ref{standfiltration} and the Soergel conjecture, that $P_i$ must be indecomposable.
\end{proof}

\section{The diagrammatic Hecke category}
\label{sectionhecke}

In this section we review the diagrammatically defined Hecke category introduced in ~\cite{EK} which is equivalent (after Karoubi closure) to the category of regular Soergel bimodules for $S_n$.  In order to distinguish the pictures in this section from other pictures arising elsewhere in this paper we color the strands for these diagrams green and the strands will carry labels from the set $\{1, 2, \ldots, n-1\}$ corresponding to simple transpositions of $S_n$.

\subsection{Admissible graphs}
An admissible graph will be a finite planar graph with boundary properly embedded in $\mathbb{R} \times [0,1]$ whose edges are colored from the set $\{1,\ldots, n-1 \}$.  Only $1$-valent, $3$-valent, $4$-valent, and $6$-valent vertices are allowed, as shown in ~\eqref{vertices}.  For the $4$-valent vertices we require $|i-j|>1$.  For the $6$-valent vertices we require that $|i-j|=1$.
The boundary points naturally inherit a coloring from the edges.  
The regions of an admissible graph are decorated by polynomials $f \in \Bbbk[x_1,\ldots,x_n]$ and if two polynomials $f$ and $g$ are in a single region then we may combine them into one polynomial $fg$ in that region.

\begin{equation}
\label{vertices}
\begin{tikzpicture}

\draw (0,0) -- (0,.5)[green][thick];
\filldraw [green] (0, .5) circle (2pt);
\draw (-.25,.25) node{$i$};

\draw (1,1) -- (1,.5)[green][thick];
\filldraw [green] (1, .5) circle (2pt);
\draw (.75,.75) node{$i$};

\draw (2,0) -- (2.5,.5)[green][thick];
\draw (3,0) -- (2.5,.5)[green][thick];
\draw (2.5,.5) -- (2.5,1)[green][thick];
\draw (2,.35) node{$i$};
\draw (3,.35) node{$i$};
\draw (2.35,.85) node{$i$};

\draw (4,1) -- (4.5,.5)[green][thick];
\draw (5,1) -- (4.5,.5)[green][thick];
\draw (4.5,.5) -- (4.5,0)[green][thick];
\draw (3.95,.75) node{$i$};
\draw (5.05,.75) node{$i$};
\draw (4.35,.25) node{$i$};

\draw (6,0) -- (7,1)[green][thick];
\draw (7,0) -- (6,1)[green][thick];
\draw (6.0,.25) node{$i$};
\draw (7.0,.25) node{$j$};
\draw (6.0,.75) node{$j$};
\draw (7.0,.75) node{$i$};

\draw (8,0) -- (9,1)[green][thick];
\draw (9,0) -- (8,1)[green][thick];
\draw (8.5,0) -- (8.5,1)[green][thick];
\draw (8.0,.25) node{$i$};
\draw (8.0,.75) node{$i$};
\draw (9.0,.25) node{$i$};
\draw (9.0,.75) node{$i$};
\draw (8.4,.15) node{$j$};
\draw (8.4,.9) node{$j$};

\end{tikzpicture}
\end{equation}

\subsection{Objects and morphisms}
The objects of the Hecke category $\mathcal{H}$ are sequences $(i_1, \ldots, i_r)$ where each entry is an element of the set $\{1,\ldots,n-1\}$.

The set of morphisms between two objects $(i_1,\ldots,i_r)$ and $(j_1,\ldots ,j_s)$ is  the free $\Bbbk[x_1,\ldots,x_n]$-module generated by admissible graphs  whose boundary points contained in $\mathbb{R} \times \{0\}$ read from left to right are colored $i_1, \ldots, i_r$ and whose boundary points contained in $\mathbb{R} \times \{1\}$ read from left to right are colored $j_1, \ldots, j_s$ subject to relations we will describe in the next subsection.
A polynomial $f$ acts on a generating admissible graph by decorating the leftmost region with $f$.

The category $\mathcal{H}$ may be equipped with a $\mathbb{Z}$-grading by declaring that the first two generators in ~\eqref{vertices} have degree $1$, the next two generators have degree $-1$ and the final two generators have degree $0$.
Additionally, polynomial generators $x_i$ decorating the regions have degree $2$.

\subsection{Relations between morphisms}

We have the following polynomial slide relations:
\begin{equation}
\label{relation3.1,3.2,3.3}
% [inline block 0: 22 envs, 21534 chars -> data_tex | \begin{tikzpicture} ...]

\end{equation}

\section{A connection between $W(n,1)\-gmod$ and $ {}^{\emptyset} \mathcal{R}^{\hat{1}}$}
\label{sectionequivalence}

\subsection{General case}
Recall the construction of the indecomposable bimodule $P_i$ in the Soergel category $ {}^{\emptyset} \mathcal{R}^{\hat{1}}$ given in ~\eqref{Pidef}.
Define maps
\begin{align*}
\phi_{i+1,i} &\colon P_i \rightarrow P_{i+1}  \hspace{.5in} 1 \mapsto x_{i+1}-y \\
\phi_{i,i+1} &\colon P_{i+1} \rightarrow P_{i} \hspace{.5in} 1 \mapsto 1 \\
\lambda_{x_j,i} &\colon P_i \rightarrow P_i \hspace{.63in} 1 \mapsto x_j.
\end{align*}

\begin{theorem}
\label{theoremendalgebra}
There is an isomorphism of algebras
\begin{equation*}
\Phi \colon \End_{{}^{\emptyset} \mathcal{R}^{\hat{1}}} (\bigoplus_{i=0}^{n-1} P_i) \cong W(n,1).
\end{equation*}
\end{theorem}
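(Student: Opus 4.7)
My approach is to construct the inverse
$\Phi^{-1}: W(n,1) \to \End_{{}^{\emptyset}\mathcal{R}^{\hat{1}}}(\bigoplus_{i=0}^{n-1} P_i)$
on generators of $W(n,1)$, verify the twelve defining relations, and then establish bijectivity by a graded-dimension count in each block $e_j(\,\cdot\,)e_i$.

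On generators, identify the summand $P_i$ with the idempotent $e_{i+2}$, so that $\Phi^{-1}(e_j)=\Id_{P_{j-2}}$ for $j=2,\ldots,n+1$; the cyclotomic relation $e_1=0$ then matches the absence of $P_{-1}$. The black dot $y_je_j$ is sent to multiplication by $y$ on $P_{j-2}$, while a red-strand dot $y_le_j$ with $l\ne j$ is sent to $\lambda_{x_k,j-2}$, where $k=l$ if $l<j$ and $k=l-1$ if $l>j$ (so $k$ is the position of the red strand among the $n$ reds). For a crossing $\psi_je(\mathbf{i})$ with $(\mathbf{i}_j,\mathbf{i}_{j+1})=(\b,\r)$, send it to $\phi_{j-1,j-2}:P_{j-2}\to P_{j-1}$, and for $(\mathbf{i}_j,\mathbf{i}_{j+1})=(\r,\b)$, send it to $\phi_{j-2,j-1}:P_{j-1}\to P_{j-2}$.

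Most of the twelve relations reduce to immediate bimodule-linearity checks. Relation (4) is vacuous, since no crossing is ever assigned between two red strands, and relations (6), (8), (9) follow from right $R^{\hat{1}}$-linearity of the $\phi$-maps together with the identity $\phi_{j-2,j-1}(y)=y$ on $P_{j-2}$. The quadratic relation (10) becomes the key calculation that the composition $\phi_{j-2,j-1}\circ\phi_{j-1,j-2}$ on $P_{j-2}$ equals multiplication by $y-x_j$, which under the above identification matches $(y_j-y_{j+1})e_j$. In $W(n,1)$ the sequence contains only one $\b$, so the triple-$\delta$ correction in the braid relation (11) vanishes automatically, reducing (11) to the plain braid identity, verified by composing $\phi$-maps through three adjacent positions. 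The cyclotomic relation (12) is built in by the choice of indexing.

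For bijectivity, Proposition~\ref{basisprop} shows that $e_jW(n,1)e_i$ is a free left $R$-module of rank $\min(i,j)-1$, with basis indexed by the exponent $c_j\in\{0,\ldots,\min(i,j)-2\}$ of the black dot. On the endomorphism side, the explicit presentation $P_i=R[y]/\prod_{l=1}^{i+1}(y-x_l)$ yields $\Hom_{{}^{\emptyset}\mathcal{R}^{\hat{1}}}(P_{i-2},P_{j-2})$ free of rank $\min(i,j)-1$ over $R$, with generators obtained by multiplying a canonical $\phi$-composition by $y^r$ for $r=0,\ldots,\min(i,j)-2$. Comparing graded degrees using the shifts $\langle -i\rangle$ on each $P_i$ and the degrees of the generating morphisms confirms a graded isomorphism in each block. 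The main obstacle I anticipate is sign and index bookkeeping in relation (10), together with verifying that the black-dot exponent bound $\min(i,j)-2$ precisely mirrors the nontrivial annihilator identity in $\Hom(P_{i-2},P_{j-2})$; the latter is a direct manifestation of Lemma~\ref{centerlemma1}.
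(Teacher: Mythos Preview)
Your proposal is correct and parallels the paper's argument, but with two genuine differences worth noting. First, you build the map in the opposite direction, from $W(n,1)$ to $\End$, and check the defining relations of $W(n,1)$; the paper defines $\Phi$ on a list of morphisms $\phi_{i\pm1,i},\lambda_{x_j,i}$ and asserts that verifying it is a homomorphism is ``straightforward.'' Your direction is logically cleaner, since the relations of $W(n,1)$ are written down explicitly while those of $\End(\oplus P_i)$ are not. Second, and more substantively, for the dimension count the paper invokes Williamson's categorified hom formula \cite[Theorem~7.2.2]{Wi} together with a Hecke-algebra computation, whereas you compute $\Hom_{(R,R^{\hat 1})}(P_{i-2},P_{j-2})$ directly from the cyclic presentation $P_a=R[y]/\prod_{l=1}^{a+1}(y-x_l)$: every bimodule map is multiplication by some $m\in P_{j-2}$ annihilated by $\prod_{l=1}^{i-1}(y-x_l)$, yielding a free $R$-module of rank $\min(i,j)-1$. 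This is more elementary and self-contained.

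Two small caveats. Your relation-(10) identity $\phi_{j-2,j-1}\circ\phi_{j-1,j-2}=(y-x_j)\cdot$ is correct, but only with $\phi_{i+1,i}(1)=y-x_{i+2}$ (equivalently $x_{i+2}-y$); the paper's displayed formula $1\mapsto x_{i+1}-y$ is in fact not well-defined as a map $P_i\to P_{i+1}$, since $(x_{i+1}-y)\prod_{l\le i+1}(y-x_l)$ is not divisible by $(y-x_{i+2})$. You have silently used the corrected version, which is also the one compatible with the faithful representation $V_n$. Finally, your remark on relation~(11) slightly overstates the work: with $k=1$ every triple product $\psi_j\psi_{j\pm1}\psi_j e(\mathbf i)$ factors through a red--red crossing and vanishes by relation~(4), so both sides of (11) are identically zero and there is nothing further to compose.
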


\begin{proof}
Define the map $\Phi$ as follows:
\begin{align*}
\Phi(\Id_{P_i}) &= e_{i+2}, \\
\Phi(\phi_{i+1,i}) & = \psi_{i+2} e_{i+2}, \\
\Phi(\phi_{i-1,i}) & = \psi_{i+1} e_{i+2}, \\
\Phi(\lambda_{x_j,i}) &= y_j e_{i+2} \hspace{.5in} \textrm{ if } j \leq i+1, \\
\Phi(\lambda_{x_j,i}) &= y_{j+1} e_{i+2} \hspace{.35in} \textrm{ if } j > i+1. \\
\end{align*}
It is straightforward to check that $\Phi$ is a homomorphism.
The map $\Phi$ is surjective since all of the generators of $W(n,1)$ are in the image of the homomorphism.

A calculation in the Hecke algebra along with ~\cite[Theorem 7.2.2]{Wi} give the graded dimension of 
$\Hom_{{}^{\emptyset} \mathcal{R}^{\hat{1}}}(P_i,P_j) $.
Using the basis described in Proposition ~\ref{basisprop} we may easily calculate the graded dimension of $e_i W(n,1) e_j$.
Assembling these results shows
\begin{equation*}
\gdim_{\Bbbk} \Hom_{{}^{\emptyset} \mathcal{R}^{\hat{1}}}(P_i, P_j) =\gdim_{\Bbbk} e_i W(n,1) e_j,
\end{equation*}
so $\Phi$ is injective and hence an isomorphism.
\end{proof}

\begin{corollary}
The center of the category $ {}^{\emptyset} \mathcal{R}^{\hat{1}}$ is isomorphic to
$R[z]/(z-x_1)\cdots (z-x_n)$.
\end{corollary}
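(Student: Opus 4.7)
The plan is to identify the center of the category with the center of the endomorphism algebra computed in Theorem~\ref{theoremendalgebra}, and then invoke the explicit computation of $Z(W(n,1))$ carried out in Proposition~\ref{centerofW(n,1)}.

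More precisely, the center of a $\Bbbk$-linear category $\mathcal{C}$ is by definition the ring of natural endotransformations of the identity functor $\mathrm{Id}_{\mathcal{C}}$. If $X \in \mathcal{C}$ has the property that every object of $\mathcal{C}$ is (up to grading shift) a direct summand of a direct sum of copies of $X$, then restriction to $X$ gives a ring isomorphism between $Z(\mathcal{C})$ and the center of $\End_{\mathcal{C}}(X)$: a central element is determined by its value on $X$, and any central endomorphism of $X$ extends canonically to a natural transformation of $\mathrm{Id}_{\mathcal{C}}$ using idempotents and direct summands. The first step is to observe that $X := \bigoplus_{i=0}^{n-1} P_i$ is exactly such an object in ${}^{\emptyset}\mathcal{R}^{\hat{1}}$: by Williamson's classification (Theorem~\ref{indecomsoergelconstruction}), the indecomposable objects of ${}^{\emptyset}\mathcal{R}^{\hat{1}}$ are, up to grading shift, precisely the $P_i$ for $i = 0, \ldots, n-1$, so $X$ contains every indecomposable (up to shift) as a summand.

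Granting this, we obtain a chain of ring isomorphisms
\begin{equation*}
Z\bigl({}^{\emptyset}\mathcal{R}^{\hat{1}}\bigr) \;\cong\; Z\bigl(\End_{{}^{\emptyset}\mathcal{R}^{\hat{1}}}(X)\bigr) \;\stackrel{Z(\Phi)}{\cong}\; Z\bigl(W(n,1)\bigr) \;\cong\; R[z]/(z-x_1)\cdots (z-x_n),
\end{equation*}
where the middle isomorphism is induced by the algebra isomorphism $\Phi$ of Theorem~\ref{theoremendalgebra}, and the last isomorphism is Proposition~\ref{centerofW(n,1)}. The corollary follows.

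The main thing to pin down carefully is the first isomorphism, namely the well-known but sometimes tacit identification of $Z(\mathcal{C})$ with $Z(\End_{\mathcal{C}}(X))$ for a ``full'' object $X$. The argument goes as follows: given $z \in Z(\End_{\mathcal{C}}(X))$ and any $Y \in {}^{\emptyset}\mathcal{R}^{\hat{1}}$, write $Y$ as a summand of a grading-shifted power $X^{\oplus m}$ via idempotent $e \in \End(X^{\oplus m})$, and define $z_Y := e \circ (z \otimes \mathrm{Id}) \circ e$; centrality of $z$ and the fact that morphisms between summands of $X^{\oplus m}$ factor through $X$ ensure that $z_Y$ is independent of the chosen embedding and natural in $Y$. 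Conversely, any natural endotransformation of $\mathrm{Id}$ restricts to a central element of $\End(X)$, and the two constructions are inverse to each other. The only subtlety is grading: one must check that the isomorphism $\Phi$ of Theorem~\ref{theoremendalgebra} is grading-preserving (which it is, as seen directly from the assignments), so that the induced identification of centers respects the grading and the classification of indecomposables up to shift matches the decomposition of $X$.

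The anticipated obstacle is not computational but rather a matter of being precise about this first identification; once it is set up, the remainder is a direct application of the two results already established in the paper.
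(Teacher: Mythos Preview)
Your proposal is correct and follows essentially the same approach as the paper: invoke Theorem~\ref{theoremendalgebra} to identify the endomorphism algebra of $\bigoplus_i P_i$ with $W(n,1)$, and then apply Proposition~\ref{centerofW(n,1)}. The paper's proof is a single sentence that leaves the identification $Z({}^{\emptyset}\mathcal{R}^{\hat{1}}) \cong Z(\End(\bigoplus_i P_i))$ implicit, whereas you have spelled out this standard ``full object'' argument explicitly.
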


\begin{proof}
The calculation of the center from Proposition ~\ref{centerofW(n,1)} along with Theorem ~\ref{theoremendalgebra} give the center of the singular Soergel category.
\end{proof}

\subsection{Special case: ${W}(2,1)$}
\label{subsectionw(2,1)}
Throughout this section green (faded) strands carry the label $1$ which we omit.
In this case the category of singular Soergel bimodules $\mathcal{R}$ is just the case of the category of  "regular" Soergel bimodules for $R=\Bbbk[x_1,x_2]$.
This in turn has a graphical description which is a special case of the calculus developed in ~\cite{EK}.  

Let $R^1=\Bbbk[x_1,x_2]^{S_2}$ and $ B = R \otimes_{R^1} R \langle -1 \rangle$.
The two indecomposable objects in the category $\mathcal{R}$ are $R$ and $B$.

There is a short exact sequence
\begin{equation*}
0 \rightarrow R \langle 1 \rangle \rightarrow B \rightarrow B / R \langle 1 \rangle \rightarrow 0, 
\end{equation*}
where the inclusion is given by $1 \mapsto x_1 \otimes 1 - 1 \otimes x_2$.
We may identify the quotient bimodule $ B / R \langle 1 \rangle$ with $R_{s_1} \langle -1 \rangle$
where $R_{s_1}$ as a left $R$-module is just $R$ but the right action is twisted as follows:
\begin{equation*}
m.r = m(s_1.r).
\end{equation*}
The isomorphism $ B / R \langle 1 \rangle \cong R_{s_1} \langle -1 \rangle$ is given by mapping 
$f \otimes 1$ to $f$.

We could find an isomorphism $P_1 \cong B$, where we recall $P_1 = R[y]/(y-x_1)(y-x_2) \langle -1 \rangle $. The isomorphism is given by mapping
\begin{align*}
1 &\mapsto 1 \otimes 1\\
y &\mapsto 1 \otimes x_1.
\end{align*}
The left $R$-module structure is given by
\begin{equation*}
x_i . y^{j}=x_i y^{j}.
\end{equation*}
The right $R$-module structure is given by
\begin{align*}
y^{j}.x_1 &=  y^{j+1} \\
y^{j}.x_2 &=  y^{j}(x_1+x_2-y).
\end{align*}

Graphically we denote the identity morphism of the Soergel bimodule $B$ by a green strand.  The identity morphism of $R$ is given by just the empty diagram.

We define a functor $ \mathcal{F} \colon W(2,1)\-gmod \rightarrow \mathcal{R}$.
On objects the functor maps the projective module $W(2,1)e(\r\r\b)$ to $B$ and the projective module $W(2,1)e(\r\b\r) $ to $R$.
Recall that $e(\b\r\r)=0$.

On the level of morphisms we have

\begin{equation}
\label{map(2)}
\begin{tikzpicture}
\draw (1,0) -- (1,2)[red][thick];
\draw (2,0) -- (2,2)[red][thick];
\draw (3,0) -- (3,2)[black][thick][dashed];
\filldraw[red](1,1) circle (2pt);
\filldraw[red](2,1) circle (2pt);
\draw (.5,1) node{$a$};
\draw (1.5,1) node{$b$};

\draw (3.5,1) node{$\mapsto$};
\draw (4.5,1) node{$x_1^a x_2^b$};
\draw (5,0) -- (5,2)[green][thick];

\end{tikzpicture}
\end{equation}

\begin{equation}
\label{map(1)}
\begin{tikzpicture}
%\draw (0,0) .. controls (1,1) .. (1,2)[][thick];
%\draw (1,0) .. controls (0,1) .. (0,2)[][thick];
\draw (1,0) -- (1,2)[red][thick];
\draw (2,0) -- (2,2)[black][thick][dashed];
\draw (3,0) -- (3,2)[red][thick];

\filldraw[red](1,1) circle (2pt);
\filldraw[red](3,1) circle (2pt);
\draw (.5,1) node{$a$};
\draw (2.5,1) node{$b$};

\draw (3.5,1) node{$\mapsto$};

\draw (4.5,1) node{$x_1^a x_2^b$};

\end{tikzpicture}
\end{equation}

\begin{equation}
\label{map(21)}
\begin{tikzpicture}
%\draw (0,0) .. controls (1,1) .. (1,2)[][thick];
%\draw (1,0) .. controls (0,1) .. (0,2)[][thick];
\draw (1,0) -- (1,2)[red][thick];
\draw (2,0) -- (3,2)[black][thick][dashed];
\draw (3,0) -- (2,2)[red][thick];

\draw (3.5,1) node{$\mapsto$};

\filldraw[green](4,1) circle (2pt);
\draw (4,1) -- (4,2)[green][thick];

\end{tikzpicture}
\end{equation}

\begin{equation}
\label{map(12)}
\begin{tikzpicture}
%\draw (0,0) .. controls (1,1) .. (1,2)[][thick];
%\draw (1,0) .. controls (0,1) .. (0,2)[][thick];
\draw (1,0) -- (1,2)[red][thick];
\draw (2,0) -- (3,2)[red][thick];
\draw (3,0) -- (2,2)[black][thick][dashed];

\draw (3.5,1) node{$\mapsto$};

\filldraw[green](4,1) circle (2pt);
\draw (4,1) -- (4,0)[green][thick];

\end{tikzpicture}
\end{equation}

As a consequence of the relations we get
\begin{equation}
\label{conseq1}
\begin{tikzpicture}
%\draw (0,0) .. controls (1,1) .. (1,2)[][thick];
%\draw (1,0) .. controls (0,1) .. (0,2)[][thick];
\draw (1,0) -- (1,2)[red][thick];
\draw (2,0) -- (2,2)[black][thick][dashed];
\draw (3,0) -- (3,2)[red][thick];

\filldraw[black](2,1) circle (2pt);

\draw (3.5,1) node{$\mapsto$};

\draw (4.5,1) node{$x_1$};

\end{tikzpicture}
\end{equation}

\begin{equation}
\label{conseq2}
\begin{tikzpicture}
\draw (1,0) -- (1,2)[red][thick];
\draw (2,0) -- (2,2)[red][thick];
\draw (3,0) -- (3,2)[black][thick][dashed];
\filldraw[black](3,1) circle (2pt);

\draw (3.5,1) node{$\mapsto$};
\draw (4.5,1) node{$x_2$};
\draw (5,0) -- (5,2)[green][thick];
\draw (5.5,1) node{$+$};
\draw (6,0) -- (6,.5)[green][thick];
\draw (6,1.5) -- (6,2)[green][thick];
\filldraw[green](6,.5) circle (2pt);
\filldraw[green](6,1.5) circle (2pt);

\end{tikzpicture}
\end{equation}

The functor induces a surjective map on the spaces of morphisms. In order to prove that $\mathcal{F}$ is an equivalence we need to compute the graded dimensions of these morphism spaces.
For $\mathcal{R}$ this is well known.

\begin{align*}
\gdim_{\Bbbk} \Hom_{\mathcal{R}}(R,R) &= \frac{1}{(1-q^2)^2} \\
\gdim_{\Bbbk} \Hom_{\mathcal{R}}(B,B) &= \frac{1+q^2}{(1-q^2)^2} \\
\gdim_{\Bbbk} \Hom_{\mathcal{R}}(R,B) &= \frac{q}{(1-q^2)^2} \\
\gdim_{\Bbbk} \Hom_{\mathcal{R}}(B,R) &= \frac{q}{(1-q^2)^2}.
\end{align*}

Now it is easy to see that a spanning set of
$ \Hom_{W(2,1)}({W}(2,1)e(\r\b\r),{W}(2,1)e(\r\b\r))$ consists of diagrams in ~\eqref{span(11)} with $a,b \in \Z_{\geq 0}$
\begin{equation}
\label{span(11)}
\begin{tikzpicture}
%\draw (0,0) .. controls (1,1) .. (1,2)[][thick];
%\draw (1,0) .. controls (0,1) .. (0,2)[][thick];
\draw (1,0) -- (1,2)[red][thick];
\draw (2,0) -- (2,2)[black][thick][dashed];
\draw (3,0) -- (3,2)[red][thick];

\filldraw[red](1,1) circle (2pt);
\filldraw[red](3,1) circle (2pt);
\draw (.5,1) node{$a$};
\draw (2.5,1) node{$b$};

\end{tikzpicture}
\end{equation}
since the relations imply that
\begin{equation}
\begin{tikzpicture}
%\draw (0,0) .. controls (1,1) .. (1,2)[][thick];
%\draw (1,0) .. controls (0,1) .. (0,2)[][thick];
\draw (1,0) -- (1,2)[red][thick];
\draw (2,0) -- (2,2)[black][thick][dashed];
\draw (3,0) -- (3,2)[red][thick];

\filldraw[red](1,1) circle (2pt);

\draw (3.5,1) node{$=$};

\draw (4,0) -- (4,2)[red][thick];
\draw (5,0) -- (5,2)[black][thick][dashed];
\draw (6,0) -- (6,2)[red][thick];
\filldraw[black](5,1) circle (2pt);

\end{tikzpicture}
\end{equation}

We then easily get that $ \Hom_{W(2,1)}({W}(2,1)e(\r\b\r),{W}(2,1)e(\r\r\b))$ is spanned by diagrams given in ~\eqref{span(21)} with $a,b \in \Z_{\geq 0}$.

\begin{equation}
\label{span(21)}
\begin{tikzpicture}
%\draw (0,0) .. controls (1,1) .. (1,2)[][thick];
%\draw (1,0) .. controls (0,1) .. (0,2)[][thick];
\draw (1,0) -- (1,2)[red][thick];
\draw (2,0) -- (3,2)[black][thick][dashed];
\draw (3,0) -- (2,2)[red][thick];

\filldraw[red](1,1.75) circle (2pt);
\filldraw[red](2.125,1.75) circle (2pt);
\draw (.5,1.75) node{$a$};
\draw (1.75,1.75) node{$b$};

\end{tikzpicture}
\end{equation}

Similarly $ \Hom_{W(2,1)}({W}(2,1)e(\r\r\b),{W}(2,1)e(\r\b\r))$ is spanned by diagrams given in ~\eqref{span(12)} with $a,b \in \Z_{\geq 0}$.

\begin{equation}
\label{span(12)}
\begin{tikzpicture}
%\draw (0,0) .. controls (1,1) .. (1,2)[][thick];
%\draw (1,0) .. controls (0,1) .. (0,2)[][thick];
\draw (1,0) -- (1,2)[red][thick];
\draw (2,0) -- (3,2)[red][thick];
\draw (3,0) -- (2,2)[black][thick][dashed];

\filldraw[red](1,1.75) circle (2pt);
\filldraw[red](2.875,1.75) circle (2pt);
\draw (.5,1.75) node{$a$};
\draw (3.125,1.75) node{$b$};

\end{tikzpicture}
\end{equation}

We claim that $ \Hom_{W(2,1)}({W}(2,1)e(\r\r\b),{W}(2,1)e(\r\r\b))$ is spanned by diagrams given in ~\eqref{span(22)} with $a,b \in \Z_{\geq 0}, c \in \{0,1\}$.

\begin{equation}
\label{span(22)}
\begin{tikzpicture}
%\draw (0,0) .. controls (1,1) .. (1,2)[][thick];
%\draw (1,0) .. controls (0,1) .. (0,2)[][thick];
\draw (1,0) -- (1,2)[red][thick];
\draw (2,0) -- (2,2)[red][thick];
\draw (3,0) -- (3,2)[black][thick][dashed];

\filldraw[red](1,1) circle (2pt);
\filldraw[red](2,1) circle (2pt);
\filldraw[black](3,1) circle (2pt);
\draw (.5,1) node{$a$};
\draw (1.5,1) node{$b$};
\draw (2.5,1) node{$c$};

\end{tikzpicture}
\end{equation}

That the set of morphisms in ~\eqref{span(22)} is a spanning set follows from the equation

\begin{equation}
\begin{tikzpicture}
%\draw (0,0) .. controls (1,1) .. (1,2)[][thick];
%\draw (1,0) .. controls (0,1) .. (0,2)[][thick];
\draw (1,0) -- (1,2)[red][thick];
\draw (2,0) -- (2,2)[red][thick];
\draw (3,0) -- (3,2)[black][thick][dashed];

\filldraw[black](3,1) circle (2pt);

\draw (2.5,1) node{$2$};

\draw (3.5,1) node{$=$};

\draw (4,0) -- (4,2)[red][thick];
\filldraw[red](4,1) circle (2pt);

\draw (5,0) -- (5,2)[red][thick];
\draw (6,0) -- (6,2)[black][thick][dashed];
\filldraw[red](6,1) circle (2pt);

\draw (6.5,1) node{$+$};

\draw (7,0) -- (7,2)[red][thick];
\draw (8,0) -- (8,2)[red][thick];
\filldraw[red](8,1) circle (2pt);

\draw (9,0) -- (9,2)[black][thick][dashed];
\filldraw[black](9,1) circle (2pt);

\draw (9.5,1) node{$-$};

\draw (10,0) -- (10,2)[red][thick];
\filldraw[red](10,1) circle (2pt);

\draw (11,0) -- (11,2)[red][thick];
\filldraw[red](11,1) circle (2pt);

\draw (12,0) -- (12,2)[black][thick][dashed];

\end{tikzpicture}
\end{equation}
which comes from pulling the black strand in the identity diagram all the way to the left (which is then zero in the cyclotomic quotient) and applying double crossing relations.

There is a faithful representation of $W(2,1)$ on $R[y_1]/(y_1-x_1) \oplus R[y_2]/((y_2-x_1)(y_2-x_2)) $ described in greater generality earlier, implying
that the spanning sets above are actually bases of homomorphism spaces.

\section{$W(n,1)$ as a deformation}
\label{sectiondeformation}

\subsection{The algebra $\overline{W}(n,1)$}

Let $ Q $ denote the quiver in ~\eqref{quiversl2} with vertices $1, \ldots, n$.
A path (of length $ r-1$) is a sequence $ p = (i_r | \cdots | i_1) $ of arrows where the starting point is $ i_1 $ and the ending point is $ i_r$.
By $ \Bbbk Q $ we denote the path algebra of $ Q $, with basis the set of all paths with product given by concatenation.
The path algebra is a graded algebra where the grading comes from the length of each path.
\begin{equation}
\label{quiversl2}
\begin{tikzpicture}
\filldraw[black](0,0) circle (1pt);
\filldraw[black](1,0) circle (1pt);
\filldraw[black](2,0) circle (1pt);
\draw (3,0) node{$\cdots$};
\filldraw[black](4,0) circle (1pt);
\filldraw[black](5,0) circle (1pt);
\draw (0,0) .. controls (.5,.5) .. (1,0)[->][thick];
\draw (0,0) .. controls (.5,-.5) .. (1,0)[<-][thick];
\draw (1,0) .. controls (1.5,.5) .. (2,0)[->][thick];
\draw (1,0) .. controls (1.5,-.5) .. (2,0)[<-][thick];
\draw (4,0) .. controls (4.5,.5) .. (5,0)[->][thick];
\draw (4,0) .. controls (4.5,-.5) .. (5,0)[<-][thick];
\draw (.5,.6) node{$$};
\draw (.5,-.6) node{$$};
\draw (0,-.3) node{$1$};
\draw (1,-.3) node{$2$};
\draw (2,-.3) node{$3$};
\draw (3.75,-.3) node{$n-1$};
\draw (5,-.3) node{$n$};
\end{tikzpicture}
\end{equation}

Set $ A_{n}^! $ to be the algebra $ \Bbbk Q $ modulo the two-sided ideal generated by $ (i|i-1|i)-(i|i+1|i)$ for $ i=1,\ldots,n-1$, where by convention $ (1|0|1)=0$.
By abuse of notation, we denote the image of an element $ p \in \Bbbk Q $ in the algebra $ A_{n}^! $ also by $ p$.  
The algebra $ A_n^! $ inherits a grading from $ \Bbbk Q$ since the relations are homogenous.  Let $ (A_n^!)^j $ denote the degree $ j $ subspace of $ A_n^! $.  The degree zero part $ (A_n^!)^0 $ is a semi-simple algebra
with basis $ \{ (1), \ldots, (n) \} $.

Define $\overline{W}(n,1)$ to be the quotient of $W(n,1)$ by the two-sided ideal generated by diagrams with dots on red strands.
It is easy to see that this quotient is a finite-dimensional algebra isomorphic to $A_n^!$.

\begin{remark} The graded category $ {}^{\mathbb{Z}} \mathcal{O}_{(1,n-1)}(\mathfrak{gl}_n) $ is equivalent to the category of finitely generated, graded, right modules over the algebra $ A_n^! $.
The projective modules $ (1)A_n^! $ and $ (n)A_n^! $ correspond to the dominant and anti-dominant projective modules respectively in category $ {}^{\mathbb{Z}}\mathcal{O}_{(1,n-1)}(\mathfrak{gl}_n)$.  For the definition of graded category $\mathcal{O}$ see for example ~\cite{FKS}.
\end{remark}

\subsection{$\HH^*(\overline{W}(n,1))$}
We will determine the Hochschild cohomology of $\overline{W}(n,1)$ by studying its isomorphic algebra $A_n^!$.
Since $A_n^!$ is graded, its Hochschild cohomology is graded as well.
We will let $\HH^{i,j}(A_n^!)$ denote the subspace of the Hochschild cohomology in cohomological degree $i$ and internal degree $j$.

The center of $A_n^!$, which is isomorphic to $\HH^0(A_n^!)$, is well known.
  
\begin{prop}
As a graded algebra $ \HH^0(A_n^!) \cong \Bbbk[c]/(c^n) $ where
$ c = (2|1|2)+\cdots+(n|n-1|n) $.
Thus $\HH^{0,2j}(A_n^!) \cong \Bbbk $ for $ 0 \leq j \leq n-1$ and zero otherwise.
\end{prop}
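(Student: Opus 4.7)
Denote by $\alpha_j = (j+1|j)$ and $\beta_j = (j|j+1)$ the two arrows of the quiver $Q$ between vertices $j$ and $j+1$. The plan is to compute $Z(A_n^!) = \HH^0(A_n^!)$ by transporting information from $W(n,1)$ across the isomorphism $A_n^! \cong \overline{W}(n,1)$. Under this identification the vertex $i \in \{1,\ldots,n\}$ of $Q$ corresponds to the idempotent $e_{i+1}$ (recall $e_1=0$), and the quadratic relation of $W(n,1)$ specializes to $\psi_i^2 e_{i+1} = (-y_i + y_{i+1})e_{i+1}$. Since red-strand dots vanish in $\overline{W}(n,1)$, the loop $\ell_i := (i|i-1|i) = \alpha_{i-1}\beta_{i-1}$ corresponds to $y_{i+1}e_{i+1}$. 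Proposition~\ref{basisprop} then shows that $e_i A_n^! e_i$ has basis $\{e_i, \ell_i, \ldots, \ell_i^{i-1}\}$, and in particular $\ell_i^i = 0$.

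Centrality of $c$ comes for free: $c$ is the image in $\overline{W}(n,1)$ of the central element $z = \sum_{k=2}^{n+1} y_k e_k$ of $W(n,1)$ from Proposition~\ref{centerofW(n,1)}, noting that $y_2 e_2 = 0$ in the quotient (since $e_2\overline{W}(n,1)e_2 = \Bbbk e_2$, forcing $y_2 e_2 = x_1 e_2 = 0$). Because $\ell_i\ell_{i'}=0$ for $i\neq i'$, one has $c^k = \sum_{i=k+1}^n \ell_i^k$ for $k\ge 1$, yielding $c^n = 0$ (every summand vanishes by $\ell_i^i = 0$) and $c^{n-1} = \ell_n^{n-1} \neq 0$. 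Evaluating at $e_n$ shows that $1, c, \ldots, c^{n-1}$ are linearly independent.

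The main obstacle is the reverse bound $\dim Z(A_n^!) \leq n$. Any central $z$ decomposes as $z = \sum_i e_i z e_i$ with $e_i z e_i = \sum_{k=0}^{i-1} a_{i,k}\ell_i^k$ in the basis from the first paragraph. The defining relation $(j|j+1|j) = (j|j-1|j)$ of $A_n^!$ (valid for $1\le j\le n-1$) translates to $\beta_j\alpha_j = \ell_j$ and $\alpha_j\beta_j = \ell_{j+1}$, whence a short induction gives $\ell_{j+1}^k\alpha_j = \alpha_j\ell_j^k$. The centrality condition $z\alpha_j = \alpha_j z$ then reduces to $\sum_k (a_{j+1,k} - a_{j,k})\alpha_j\ell_j^k = 0$. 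Under the correspondence, $\alpha_j\ell_j^k$ maps to $y_{j+2}^k\psi_{j+1}e_{j+1}$ for $0 \leq k \leq j-1$, which by a second application of Proposition~\ref{basisprop} is precisely a basis of $e_{j+2}\overline{W}(n,1)e_{j+1}$; their linear independence forces $a_{j,k} = a_{j+1,k}$ for each valid $k$. Setting $a_k := a_{k+1,k}$ yields $z = \sum_{k=0}^{n-1} a_k c^k$, so $Z(A_n^!) = \Bbbk[c]/(c^n)$. The grading statement is immediate: arrows in $A_n^!$ have degree $1$, so $c^k$ is homogeneous of degree $2k$, whence $\HH^{0,2j}(A_n^!) \cong \Bbbk$ for $0\le j\le n-1$ and vanishes otherwise.
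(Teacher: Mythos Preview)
The paper does not actually give a proof of this proposition; it is simply labeled ``well known'' and stated without argument. Your proof is correct and complete.

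One small point worth making explicit: Proposition~\ref{basisprop} gives a $\Bbbk$-basis of $e_jW(n,1)e_i$, not of $e_j\overline{W}(n,1)e_i$. When you invoke it to obtain bases $\{\ell_i^k\}_{0\le k\le i-1}$ of $(i)A_n^!(i)$ and $\{\alpha_j\ell_j^k\}_{0\le k\le j-1}$ of $(j{+}1)A_n^!(j)$, you are implicitly using that the basis elements carrying no red dots remain linearly independent after passing to the quotient. This is true: by Proposition~\ref{basisprop} each $e_jW(n,1)e_i$ is free as a left $R$-module on the set $\{y_j^{c_j}\psi_w e_i\}$, and the two-sided ideal generated by red dots is exactly $\mathfrak m\cdot W(n,1)$ (red dots slide through red--black crossings by \eqref{crossdot1}, \eqref{crossdot2}), so the quotient has the claimed basis over $\Bbbk$. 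With this in hand, your argument goes through.

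Your route through $W(n,1)$ is a pleasant shortcut: centrality of $c$ is inherited from Proposition~\ref{centerofW(n,1)} rather than checked by hand, and the recursion $a_{j,k}=a_{j+1,k}$ forced by $z\alpha_j=\alpha_jz$ mirrors the recursion \eqref{grecursion} in the proof of Proposition~\ref{centerofW(n,1)}, specialized to the quotient. One could alternatively argue entirely inside $A_n^!$ (its structure is simple enough that the bases above can be established directly), but your approach makes the link to the rest of the paper more transparent.
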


In order to determine the higher Hochschild cohomology we construct a resolution of $A_n^!$.

\begin{prop}
\label{Aresolved}
There is a projective resolution of $A_n^!$ as a bimodule:
%\begin{equation*}
%\bigoplus_{i=1}^{n-1} A_n^!(i) \otimes (i)A_n^! \langle 2 \rangle \longrightarrow^{ f_2 } \bigoplus_{i=2}^n A_n^!(i) \otimes (i-1)A_n^! \langle 1 \rangle \oplus \bigoplus_{i=2}^n A_n^!(i-1) \otimes (i)A_n^! \langle 1 \rangle \longrightarrow^{f_1} \bigoplus_{i=1}^n A_n^!(i) \otimes (i)A_n^!
%\end{equation*}
\[
\xymatrix{
\bigoplus_{i=1}^{n-1} A_n^!(i) \otimes (i)A_n^! \langle 2 \rangle \ar[r]^{f_2 \hspace{.1in}} 
& {\begin{matrix}  \bigoplus_{i=2}^n A_n^!(i) \otimes (i-1)A_n^! \langle 1 \rangle \\ \oplus \\ 
\bigoplus_{i=2}^n A_n^!(i-1) \otimes (i)A_n^! \langle 1 \rangle
\end{matrix}}
\ar[r]^{\hspace{.2in} f_1}
& \bigoplus_{i=1}^n A_n^!(i) \otimes (i)A_n^!
}
\]

%\[
%\begin{diagram}
%\node{\bigoplus_{i=1}^{n-1} A_n^!(i) \otimes (i)A_n^! \langle 2 \rangle} \arrow{e}{f_2}
%	\node{\bigoplus_{i=2}^n A_n^!(i) \otimes (i-1)A_n^! \langle 1 \rangle \oplus \bigoplus_{i=2}^n A_n^!(i-1) \otimes (i)A_n^! \langle 1 \rangle} \arrow{e}{f_1}
%		\node{\bigoplus_{i=1}^n A_n^!(i) \otimes (i)A_n^!} \\
%\end{diagram}
%\]
where
\begin{align*}
f_1 \colon 
& (i) \otimes (i-1) \mapsto (i|i-1) \otimes (i-1) - (i) \otimes (i|i-1) \\
& (i-1) \otimes (i) \mapsto (i-1|i) \otimes (i) - (i-1) \otimes (i-1|i) \\ 
f_2 \colon & (i) \otimes (i) \mapsto (i|i+1) \otimes (i) + (i) \otimes (i+1|i) - (i) \otimes (i-1|i) - (i|i-1) \otimes (i).
\end{align*}
\end{prop}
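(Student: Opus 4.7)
The plan is to recognise this sequence as the beginning of the minimal (Koszul-type) bimodule resolution of $A_n^!$, and to verify its exactness step by step. Each summand is already manifestly projective as an $A_n^!$-bimodule: $A_n^!(i) \otimes (i)A_n^!$ is the image of the idempotent $(i) \otimes (i)$ acting on the free bimodule $A_n^! \otimes A_n^!$, hence a direct summand of the latter. Well-definedness of $f_1$ and $f_2$ is then a matter of checking, elementary-tensor by elementary-tensor, that source and target vertices match under the conventions $p \cdot (j) = p$ iff $p$ starts at $j$ and $(j) \cdot p = p$ iff $p$ ends at $j$.

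Next I would verify $f_1 \circ f_2 = 0$ by direct expansion. Applying $f_1$ to each of the four terms of $f_2((i) \otimes (i))$ produces eight tensors; four of them cancel pairwise immediately (the ones contributing paths of length one on either side), while the remaining four carry length-two loops at vertex $i$ in one tensor factor and cancel exactly upon invoking the defining relation $(i|i-1|i) = (i|i+1|i)$ of $A_n^!$ (with the convention that the term disappears when $i=1$).

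For exactness at the rightmost position, the augmentation $\mu \colon \bigoplus_i A_n^!(i) \otimes (i) A_n^! \to A_n^!$ given by multiplication is surjective (the idempotents $(i)$ sum to $1$), and its kernel equals $\operatorname{im} f_1$ by the standard telescoping argument for the beginning of the bar resolution: every element of the kernel can be rewritten using identities of the shape $a \cdot (j|j') \otimes b - a \otimes (j|j') \cdot b$, which are exactly the images of the arrows under $f_1$.

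The central obstacle is exactness at the middle term, together with injectivity of $f_2$. Here I would argue using a monomial basis of $A_n^!$ (zigzag paths modulo the single loop-equality at each interior vertex): a generic kernel element of $f_1$ can be expanded coordinate-wise, and the syzygies that survive correspond precisely to the quadratic relations $(i|i-1|i) - (i|i+1|i)$, which is the content of $\operatorname{im} f_2$. Injectivity of $f_2$ then follows because distinct generators $(i) \otimes (i)$ produce tensors with linearly independent length-one left or right components after projecting to any single summand of the target. A cleaner alternative, which I would prefer in writing, is to invoke Koszulity of the zigzag algebra $A_n^!$, which forces the first three terms of any minimal bimodule resolution to have exactly the given shape; the explicit maps $f_1, f_2$ are then just the standard ones attached to the generators and relations.
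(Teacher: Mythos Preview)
The paper states this proposition without proof, so your write-up would in fact supply what the paper omits. Your verification that $f_1\circ f_2=0$ via the relation $(i|i-1|i)=(i|i+1|i)$ is correct, as is the exactness argument at the augmentation.

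Two points of caution. First, $A_n^!$ as defined here is \emph{not} the usual zigzag algebra: the convention $(1|0|1)=0$ imposes the extra relation $(1|2|1)=0$, and it is precisely this asymmetry that makes the global dimension finite. The symmetric zigzag algebra is self-injective of infinite global dimension, so it admits no finite projective bimodule resolution at all; hence the phrase ``invoke Koszulity of the zigzag algebra'' is misleading. The correct external input is Koszulity of the parabolic block ${}^{\mathbb Z}\mathcal O_{(1,n-1)}(\mathfrak{gl}_n)$, which the paper's remark identifies with $A_n^!\text{-mod}$. Second, Koszulity alone only fixes the \emph{shape} of each term $P_k$ of the minimal bimodule resolution; it does not by itself force the resolution to terminate at $P_2$. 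For that you additionally need the vanishing of cubic syzygies among the relations $r_i=(i|i-1|i)-(i|i+1|i)$, which you should either check directly (a short computation in the path algebra) or again import from the category~$\mathcal O$ side where the global dimension is known to be~$2$.

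Finally, your sentence on injectivity of $f_2$ (``distinct generators $(i)\otimes(i)$ produce linearly independent components'') only addresses linear independence of the images of the \emph{bimodule generators}; injectivity of the full bimodule map requires controlling arbitrary elements $\sum_i a_i\otimes b_i$ with $a_i\in A_n^!(i)$, $b_i\in(i)A_n^!$. This is where the explicit monomial basis argument you allude to actually has to be carried out.
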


\begin{corollary}
If $ i \geq 3$ then $\HH^i(A_n^!)=0$.
\end{corollary}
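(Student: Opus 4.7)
The plan is to simply read off the corollary from the length of the projective bimodule resolution given in Proposition \ref{Aresolved}. Hochschild cohomology is computed by applying the functor $\Hom_{A_n^! \otimes (A_n^!)^{op}}(-, A_n^!)$ to a projective resolution of $A_n^!$ as a bimodule over itself, and then taking cohomology. Proposition \ref{Aresolved} exhibits a resolution whose only nonzero terms sit in homological degrees $0$, $1$, and $2$; there are no terms in degrees $\geq 3$.

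Therefore, after applying the bimodule $\Hom(-,A_n^!)$ functor, the resulting cochain complex is concentrated in cohomological degrees $0$, $1$, $2$, with zeros everywhere else. Taking cohomology gives $\HH^i(A_n^!) = 0$ for all $i \geq 3$.

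The only thing one needs beyond citing Proposition \ref{Aresolved} is to confirm that the proposed sequence really is a resolution, i.e. that it is exact and that each term is indeed a projective $A_n^!$-bimodule. Projectivity is immediate since each term is a direct sum of bimodules of the form $A_n^!(i) \otimes (j) A_n^!$, which are projective $A_n^!$-bimodules (they are summands of $A_n^! \otimes A_n^!$ cut out by the primitive idempotents $(i), (j)$). Exactness at the rightmost two spots is a direct computation: the image of $f_1$ is precisely the kernel of the augmentation $A_n^! \otimes A_n^! \to A_n^!$ restricted to the idempotent summands, using the defining relations $(i|i\pm 1|i)$ of $A_n^!$; and the image of $f_2$ generates the relations among the generators $(i|i-1)$ and $(i-1|i)$ appearing on the left side of $f_1$, which again reduces to the single family of quadratic relations $(i|i-1|i) = (i|i+1|i)$ (with the convention $(1|0|1)=0$). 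The main (minor) obstacle is therefore verifying exactness in the middle, which is a direct check using the quadratic nature of the relations in $A_n^!$; once that is done the corollary is automatic from the length of the resolution.
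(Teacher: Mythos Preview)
Your argument is correct and is essentially the paper's own proof: the paper simply notes that the projective resolution in Proposition~\ref{Aresolved} has length two, so $\HH^i(A_n^!)=0$ for $i\geq 3$. Your additional remarks on projectivity and exactness are not needed here, since Proposition~\ref{Aresolved} already asserts that the displayed complex is a projective resolution; once you accept the proposition, the corollary is immediate.
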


\begin{proof}
This follows directly from the fact that length of the resolution in Proposition ~\ref{Aresolved} is two.
\end{proof}

\begin{prop}
The first Hochschild cohomology of $A_n^!$ is given by:
\begin{equation*}
\HH^{1,j}(A_n^!) \cong
\begin{cases}
\Bbbk & \text{ if } j=0,2, \ldots, 2(n-2) \\
0 & \text{ otherwise. }
\end{cases}
\end{equation*}
\end{prop}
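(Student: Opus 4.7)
\emph{The plan} is to apply $\Hom_{A_n^! \otimes (A_n^!)^{\mathrm{op}}}(-,\,A_n^!)$ to the length-two projective bimodule resolution of Proposition~\ref{Aresolved}, yielding a cochain complex $C^0 \xrightarrow{d_0} C^1 \xrightarrow{d_1} C^2$ where, using the standard identification $\Hom_{A\otimes A^{\mathrm{op}}}(A(i)\otimes(j)A,\,A) \cong (i)A(j)$,
\begin{align*}
C^0 &= \bigoplus_{i=1}^n (i)A_n^!(i), \\
C^1 &= \bigoplus_{i=2}^n (i)A_n^!(i-1)\langle-1\rangle \;\oplus\; \bigoplus_{i=2}^n (i-1)A_n^!(i)\langle-1\rangle, \\
C^2 &= \bigoplus_{j=1}^{n-1} (j)A_n^!(j)\langle-2\rangle.
\end{align*}
The transposes of $f_1,f_2$ give $d_0(y)_i = \bigl((i|i-1)y_{i-1} - y_i(i|i-1),\; (i-1|i)y_i - y_{i-1}(i-1|i)\bigr)$ and $d_1(a,b)_j = (j|j+1)a_{j+1} + b_{j+1}(j+1|j) - a_j(j-1|j) - (j|j-1)b_j$, with terms outside the admissible range set to zero, and then $\HH^{1,*}(A_n^!) = \ker(d_1)/\operatorname{im}(d_0)$ inherits the internal grading.

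\medskip

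\emph{To set up the calculation} I will first fix a monomial basis of $A_n^!$. With $c_j := (j|j-1|j) = (j|j+1|j)$ for $2 \leq j \leq n-1$, $c_n := (n|n-1|n)$, and $c_1 := 0$ (forced by the convention $(1|0|1)=0$), the piece $(i)A_n^!(j)$ has basis $\{(i|\cdots|j)\,c_j^k : 0 \leq k \leq \min(i,j)-1\}$, together with the commutation relations $c_i\,(i|i-1) = (i|i-1)\,c_{i-1}$ and $(i-1|i)\,c_i = c_{i-1}\,(i-1|i)$ and the truncation $c_j^{\min(i,j)}(i|\cdots|j) = 0$ (all traceable to $c_1=0$). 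In particular every graded piece of $C^\bullet$ lies in even internal degrees, so $\HH^{1,j} = 0$ for odd $j$ automatically. The positive half of the theorem is then provided by the classes
\begin{equation*}
\eta_\ell := \bigl((i|i-1)\,c_{i-1}^\ell,\;0\bigr)_{i=\ell+2}^n \in C^1, \qquad 0 \leq \ell \leq n-2,
\end{equation*}
each of internal degree $2\ell$: the commutation relations give $d_1(\eta_\ell)_j = (1-1)\,c_j^{\ell+1} = 0$ by telescoping, and $\eta_\ell$ is not a coboundary because every element of $\operatorname{im}(d_0)$ has balanced type-1 and type-2 coefficients $\beta_i = -\alpha_i$ whereas $\eta_\ell$ has all $b_i = 0$.

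\medskip

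\emph{To complete the argument} I will perform a dimension count in each internal degree $2\ell$: explicitly, $\dim C^1_{2\ell} = 2(n-\ell-1)$, $\dim C^0_{2\ell} = n-\ell$, $\dim C^2_{2\ell} = \max(0,n-\ell-2)$; the cocycle condition reduces to $\alpha_{j+1}+\beta_{j+1} = \alpha_j+\beta_j$ for $\ell+2 \leq j \leq n-1$, so $\operatorname{rank}(d_1) = \max(0,n-\ell-2)$; and the image of $d_0$ on degree-$2\ell$ input $(\mu_i c_i^\ell)_{i\geq \ell+1}$ is the $(n-\ell-1)$-dimensional subspace cut out by $\beta_i = -\alpha_i$ and $\alpha_i = \mu_{i-1}-\mu_i$. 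Subtracting yields $\dim\HH^{1,2\ell} = (n-\ell)-(n-\ell-1) = 1$ for $0 \leq \ell \leq n-2$, while the complex is identically zero in internal degree $2\ell$ whenever $\ell \geq n-1$ because $c_j^\ell = 0$ in $(j)A_n^!(j)$ once $\ell \geq j$. The main obstacle will be the bookkeeping in this last step: the cutoff degree of each graded piece of $(i)A_n^!(j)$ varies with $\min(i,j)$, and the rank drops of $d_0, d_1$ at the $j=1$ and $i=\ell+1$ boundaries (all caused by the asymmetry $c_1=0$) must be tracked case by case, but once these are identified the telescoping structure of both differentials collapses to the clean formula above.
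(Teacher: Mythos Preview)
Your proposal is correct and follows exactly the approach the paper indicates: the paper's own proof consists of the single sentence ``This is straightforward using the resolution of $A_n^!$ in Proposition~\ref{Aresolved},'' and you have carried out precisely that straightforward computation. One minor imprecision: in the final clause you say ``the complex is identically zero in internal degree $2\ell$ whenever $\ell \geq n-1$,'' but in fact $C^0_{2(n-1)}$ is one-dimensional (spanned by $c_n^{n-1}$); what you need, and what is true, is only that $C^1_{2\ell}=0$ for $\ell\geq n-1$, which already forces $\HH^{1,2\ell}=0$ there.
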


\begin{proof}
This is straightforward using the resolution of $A_n^!$ in Proposition ~\ref{Aresolved}.
\end{proof}

\begin{prop}
\label{HH2prop}
The second Hochschild cohomology is given by:
\begin{equation*}
\HH^{2,j}(A_n^!) \cong
\begin{cases}
\Bbbk^{n-1} & \text{ if } j=0 \\
0 & \text{ otherwise. }
\end{cases}
\end{equation*}
\end{prop}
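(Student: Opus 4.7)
The plan is to apply $\Hom_{A^e}(-, A_n^!)$ (writing $A = A_n^!$ for brevity) to the projective bimodule resolution from Proposition~\ref{Aresolved} and to compute the resulting cohomology. Via the adjunction $\Hom_{A^e}(A(i) \otimes (j)A, M) \cong (i) M (j)$ this produces a three-term cochain complex
\begin{equation*}
\bigoplus_{i=1}^n (i) A (i) \;\longrightarrow\; \bigoplus_{i=2}^n \Bigl[(i) A(i-1) \oplus (i-1) A(i)\Bigr]\langle -1 \rangle \;\longrightarrow\; \bigoplus_{i=1}^{n-1} (i) A (i) \langle -2 \rangle,
\end{equation*}
and since the resolution has length two, $\HH^{2, j}(A_n^!)$ is the degree-$j$ piece of the cokernel of the final map, which I will call $d^1$.

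The next step is to dualize the explicit formula for $f_2$ given in Proposition~\ref{Aresolved}. A cochain in the middle term is a collection $(x_{j,j-1}, x_{j-1,j})_{j=2}^{n}$ with $x_{j,j-1} \in (j)A(j-1)$ and $x_{j-1, j} \in (j-1)A(j)$, and the $i$-th component of $d^1$ is
\begin{equation*}
d^1(x)_i \;=\; (i|i{+}1)\, x_{i+1,i} + x_{i,i+1}\,(i{+}1|i) - x_{i,i-1}\,(i{-}1|i) - (i|i{-}1)\, x_{i-1,i},
\end{equation*}
with the convention that any term referring to vertex $0$ (at $i=1$) or vertex $n+1$ (at $i=n-1$) is dropped.

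The heart of the argument is then to analyze $C^2$ degree by degree. From the defining relations of $A_n^!$, the subspace $(i)A(i)$ is spanned in degree $0$ by the trivial idempotent $(i)$ and in degree $2$ by $c_i := (i|i{-}1|i) = (i|i{+}1|i)$ (which vanishes when $i=1$), while a parity argument shows it vanishes in every odd degree. Each summand of $d^1(x)_i$ is a length-one path times a length-one cochain entry, so it produces only multiples of $c_i$ inside $(i)A(i)$. Thus the image of $d^1$ cannot reach the degree-zero basis vectors $(i)$, while it surjects onto the span of each $c_i$ for $i\ge 2$ (the basis vector $c_i$ is realized by choosing $x_{i+1,i}=(i{+}1|i)$ and all other entries zero, for instance). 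After accounting for the $\langle -2\rangle$ grading shift this identifies $\HH^{2,0}$ with $\bigoplus_{i=1}^{n-1}\Bbbk\cdot(i)\cong\Bbbk^{n-1}$ and shows that $\HH^{2,j}=0$ for every $j\ne 0$.

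The main obstacle is the bookkeeping at the two boundary indices $i=1$ and $i=n-1$. At $i=1$ the relation $(1|2|1)=0$ forces $d^1(\phi)_1\equiv 0$, which is precisely what lets the trivial path $(1)$ survive and produces the count $n-1$ rather than $n-2$; at $i=n-1$ the formula for $d^1$ retains all four terms, and one must check that every $c_i$ with $i\ge 2$ is hit and that no unwanted linear combination sneaks into the kernel.
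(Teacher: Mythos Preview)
Your approach---apply $\Hom_{A^e}(-,A)$ to the resolution and compute the cokernel of $d^1$---is exactly the paper's (very brief) argument. However, your execution has a real gap once $n\ge 4$. The assertion that each cochain entry is ``length one'' is false: the space $(j)A(j{-}1)$ has dimension $j{-}1$, with basis the paths $(j|j{-}1),\,(j|j{-}1|j{-}2|j{-}1),\ldots$ of lengths $1,3,\ldots,2j{-}3$, and likewise $(i)A(i)$ is $i$-dimensional with basis $(i),c_i,c_i^2,\ldots,c_i^{i-1}$. Your argument correctly shows that the image of $d^1$ misses the idempotents $(i)$ (every term of $d^1(x)_i$ has length $\ge 2$) and that it contains each $c_i$; but it says nothing about the higher powers $c_i^k$ for $k\ge 2$, so for $n\ge 4$ you have not established that $\HH^{2,j}=0$ in the remaining degrees. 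One way to close the gap: for fixed $k\ge 1$, take the cochain with the single nonzero entry $x_{i+1,i}=(i{+}1|i)\,c_i^{\,k-1}$, compute $d^1(x)_i=c_i^k$ and $d^1(x)_{i+1}=-c_{i+1}^k$, and induct downward from $i=n-1$ (where the second term is absent because $C^2$ has no $i=n$ summand) to conclude that every $c_i^k$ lies in the image.

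There is also a grading inconsistency in your write-up. With your shift $\langle -2\rangle$ on $C^2$, the idempotents $(i)\in(i)A(i)$ sit in internal degree $-2$, so the nonvanishing piece of the cokernel lands in degree $-2$, not $0$; your sentence ``after accounting for the $\langle -2\rangle$ grading shift'' then goes the wrong way. The paper's labeling of this class as $\HH^{2,0}$ reflects a different normalization (the projections $(i)\otimes(i)\mapsto(i)$ have degree zero as maps out of the unshifted $A(i)\otimes(i)A$). This is a convention issue rather than a mathematical error, but your bookkeeping should be made internally consistent.
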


\begin{proof}
It is easy to see that all $A_n^!$-bimodule maps $A_n^!(i) \otimes (i) A_n^! $ are in the image of $f_2^!$ except the projection maps $(i) \otimes (i) \mapsto (i)$.
\end{proof}

\subsection{The deformation}

\begin{prop}
For $i=2,\ldots,n$
the $\Bbbk$-linear map $\mu_i \colon A_n^! \otimes A_n^! \rightarrow A_n^!$ which is defined by
\begin{equation*}
\mu_i((i|i-1)\otimes(i-1|i))=-(i)
\hspace{.5in}
\mu_i((i-1|i)\otimes(i|i-1))=-(i-1)
\end{equation*}
and takes the value zero on all other basis elements is a non-trivial $2$-cocycle.  
Furthermore, these $\mu_i$ are linearly independent and span $\HH^2(A_n^!)$.
\end{prop}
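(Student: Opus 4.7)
The plan is to verify the cocycle condition for each $\mu_i$ by direct computation, and then to establish linear independence of the classes $[\mu_2], \ldots, [\mu_n]$ in $\HH^2(A_n^!)$. Combined with the dimension count $\dim_{\Bbbk} \HH^{2}(A_n^!) = n-1$ from Proposition~\ref{HH2prop}, linear independence of $n-1$ classes automatically gives both non-triviality of each $\mu_i$ and spanning: any $n-1$ linearly independent classes in an $(n-1)$-dimensional space form a basis.

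For the cocycle check I would evaluate the Hochschild differential
\[
(\partial \mu_i)(a, b, c) \;=\; a\,\mu_i(b, c) \;-\; \mu_i(ab, c) \;+\; \mu_i(a, bc) \;-\; \mu_i(a, b)\,c
\]
on triples of basis path elements. Since $\mu_i$ is supported on only the two tensors $(i|i-1)\otimes(i-1|i)$ and $(i-1|i)\otimes(i|i-1)$, at most a few of the four terms can be nonzero for a given triple. Organizing the enumeration by which of $(a,b)$, $(b,c)$, $(ab,c)$, or $(a,bc)$ meets the support and using the defining relations $(j|j-1|j) = (j|j+1|j)$ for $2 \le j \le n-1$ together with $(1|2|1) = 0$, one verifies the nonzero contributions cancel in pairs.

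For linear independence, assume $\sum_{i=2}^{n} c_i \mu_i = \partial \nu$ for some $\Bbbk$-linear 1-cochain $\nu \colon A_n^! \to A_n^!$ of internal degree $-2$. Since $A_n^!$ is concentrated in non-negative degrees, $\nu$ vanishes on $(A_n^!)^0 \oplus (A_n^!)^1$, so $(\partial \nu)(a, b) = -\nu(ab)$ whenever $a$ and $b$ both have path length one. Evaluating the equality on $(j|j-1) \otimes (j-1|j)$ gives $\nu\bigl((j|j-1|j)\bigr) = c_j\,(j)$ for $2 \le j \le n$, and evaluating on $(j|j+1) \otimes (j+1|j)$ gives $\nu\bigl((j|j+1|j)\bigr) = c_{j+1}\,(j)$ for $1 \le j \le n-1$. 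The relation $(j|j-1|j) = (j|j+1|j)$ in $A_n^!$ for $2 \le j \le n-1$ then forces $c_j = c_{j+1}$, so $c_2 = c_3 = \cdots = c_n$. Finally, evaluate on $(1|2) \otimes (2|1)$: the product $(1|2)(2|1) = (1|2|1)$ equals zero in $A_n^!$ by the boundary convention $(1|0|1) = 0$, so the right-hand side is $-\nu(0) = 0$, whereas the left-hand side equals $-c_2\,(1)$. Thus $c_2 = 0$, and hence all $c_i$ vanish, so $[\mu_2], \ldots, [\mu_n]$ are linearly independent in $\HH^2(A_n^!)$ and therefore form a basis.

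The main obstacle is the case analysis in the cocycle verification: one must carefully track how the two supported tensors of $\mu_i$ interact with compositions of paths and with the quiver idempotents, and ensure that the paired cancellations go through in every configuration. Once that is done, the linear-independence step and the conclusion from the dimension count are straightforward.
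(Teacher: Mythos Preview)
Your proposal is correct and follows the same overall strategy as the paper: establish that the $\mu_i$ give linearly independent classes and then invoke the dimension count $\dim_\Bbbk \HH^2(A_n^!)=n-1$ from Proposition~\ref{HH2prop}. The paper's own proof is a single sentence (``the $\mu_i$ are clearly linearly independent, so by Proposition~\ref{HH2prop} they are a basis''), whereas you actually supply the missing content: a sketch of the cocycle verification and, more importantly, an explicit coboundary argument showing that no nontrivial combination $\sum c_i\mu_i$ is of the form $\partial\nu$. Your use of the relation $(j|j-1|j)=(j|j+1|j)$ to force $c_j=c_{j+1}$ and of $(1|2|1)=0$ to kill the common value is exactly the right mechanism, and it makes precise what the paper leaves as ``clear.''
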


\begin{proof}
The $\mu_i$ are clearly linearly independent so it follows that they are a basis by Proposition ~\ref{HH2prop}.
\end{proof}

The element $\mu_i \in \HH^2(A_n^!)$ corresponds to deforming the relation in the algebra $\overline{W}(n,1)$ 
\begin{equation*}
\begin{tikzpicture}
\draw (0,0) .. controls (1,1) .. (0,2)[red][thick];
\draw (1,0) .. controls (0,1) .. (1,2)[][thick][dashed];
\draw (1.5,1) node{$=$};
\draw (2,0) -- (2,2)[red][thick];
\draw (3,0) -- (3,2)[thick][dashed];
\filldraw[black](3,1) circle (2pt);
%\draw (3.5,1) node{$-$};
%\draw (4,0) -- (4,2)[red][thick];
%\draw (5,0) -- (5,2)[thick][dashed];
%\filldraw[red](4,1) circle (2pt);
\end{tikzpicture}
\end{equation*}
by subtracting a term with a dot on the $i$-th solid red strand:
\begin{equation*}
\begin{tikzpicture}
\draw (0,0) .. controls (1,1) .. (0,2)[red][thick];
\draw (1,0) .. controls (0,1) .. (1,2)[][thick][dashed];
\draw (1.5,1) node{$=$};
\draw (2,0) -- (2,2)[red][thick];
\draw (3,0) -- (3,2)[thick][dashed];
\filldraw[black](3,1) circle (2pt);
\draw (3.5,1) node{$-$};
\draw (4,0) -- (4,2)[red][thick];
\draw (5,0) -- (5,2)[thick][dashed];
\filldraw[red](4,1) circle (2pt);
\end{tikzpicture}
\end{equation*}

\section{A braid group action on $Kom({W}(n,1)\-gmod)$}
\label{sectionbraid}

\subsection{Algebraic description}
Throughout this section let $W=W(n,1)$.
Recall the definition of $W^i:=W^i(n,1) \subset W(n,1)$ given in Section ~\ref{specialsubalgebra}.  
For $i=1,\ldots,n-1$, define the $(W,W)$-bimodule 
\begin{equation*}
W_i := W \otimes_{W^i} W \langle -1 \rangle.
\end{equation*}

\subsection{Graphical description of $W_i$}
We would like to give a graphical description of the bimodule $W_i$ for $i=1,\ldots,n-1$.
We consider collections of smooth arcs in the plane connecting $ n $ red points and $1$ black point on one horizontal line with $n$ red points and $1$ black point on another horizontal line.
The $i$th and $(i+1)$st red dots on one horizontal line must be connected to the
 $i$th and $(i+1)$st red dots on the other horizontal line by a diagram which has a thick red strand in the middle, which is given in ~\eqref{thickgenerator}.
The arcs are colored in a manner consistent with their boundary points and the black arc is dashed.
Arcs are assumed to have no critical points (in other words no cups or caps).
Arcs are allowed to intersect, but no triple intersections are allowed.
Arcs are allowed to carry dots.  
Two diagrams that are related by an isotopy that does not change the combinatorial types of the diagrams or the relative position of crossings are taken to be equal up to sign.
The elements of the vector space $ W_i $ are formal linear combinations of these diagrams modulo the local relations for $W$ along with the relations given in
~\eqref{symmetricrelations1}, ~\eqref{symmetricrelations2}, ~\eqref{blackthickred1}, and ~\eqref{blackthickred2}.

%The vector space $W_i$ inherits a grading from $W$.  We must also specify that the middle diagrams in ~\eqref{blackthickred1} and ~\eqref{blackthickred2} have degree two.
%Furthermore we shift the entire vector space $W_i$ down by one.

\begin{equation}
\label{thickgenerator}
% [inline block 1: 18 envs, 20376 chars -> data_tex | \begin{tikzpicture} ...]

\end{equation}

\begin{prop}
Equations ~\eqref{bimodmap2} and ~\eqref{bimodmap2a} define a bimodule map $\iota_i \colon W \rightarrow W_i $.
\end{prop}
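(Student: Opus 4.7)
The plan is to recognise that a bimodule homomorphism $W \to W_i$ is the same data as a bimodule-central element $v \in W_i$, i.e.\ an element satisfying $wv = vw$ for every $w \in W$; the resulting homomorphism is then $\iota_i(w) := wv = vw$. I would first produce such a $v$ by assembling the right-hand sides of the two displayed formulas over the idempotent decomposition of $1_W$: use formula \eqref{bimodmap2} on idempotents where the black strand avoids positions $i$ and $i+1$, and formula \eqref{bimodmap2a} on those where it passes between them. Agreement of $\iota_i$ with the stated values on the generators is then automatic from $\iota_i(w) = wv$.

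Next I would verify centrality $wv = vw$ generator-by-generator, using that $W$ is generated by the idempotents $e(\mathbf{i})$, the dots $y_j$, and the crossings $\psi_j$. By the locality of the relations in $W_i$, any generator supported strictly outside the thick red region at positions $i, i+1$ commutes with $v$ for free: this handles all idempotents, the dots $y_j$ for $j \notin \{i, i+1\}$, and the crossings $\psi_j$ with $|j-i|>1$. The proof is therefore reduced to two substantive cases: the dot generators $y_i, y_{i+1}$ on the strands forming the thick red region, and the crossings $\psi_{i-1}, \psi_{i+1}$ that transport the black strand into or out of the region between positions $i$ and $i+1$.

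For the dot generators, I would appeal to the symmetric dot-sliding relations \eqref{symmetricrelations1} and \eqref{symmetricrelations2}, which force the elementary symmetric polynomials $y_i + y_{i+1}$ and $y_i y_{i+1}$ to slide freely through the thick red strand. The particular antisymmetric form of $v$ (a dot on the top-left of the thick red minus a dot on the bottom-right, together with the black-strand variant) is engineered precisely so that the individual variables $y_i$ and $y_{i+1}$ also commute with $v$, yielding $y_i v = v y_i$ and $y_{i+1} v = v y_{i+1}$. For the crossing generators, I would combine the black/thick-red commutation relations \eqref{blackthickred1}, \eqref{blackthickred2} with Lemma~\ref{bimodulelemma} on the structure of $e_{i+1} W_i e_{i+1}$ to show that the $\psi_j$-action converts the \eqref{bimodmap2}-type formula in one idempotent component into the \eqref{bimodmap2a}-type formula in the neighbouring component; matching these across the two sides of $v$ gives $\psi_j v = v \psi_j$.

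The main obstacle will be this last crossing verification: several terms arise simultaneously from the quadratic $\psi_j^2$ relation and from the modified Reidemeister-III relation \eqref{reid3nu}, and demonstrating that the contributions match requires a careful expansion in the bases of Proposition~\ref{basisprop} and Lemma~\ref{bimodulelemma}. The cyclotomic relation requires no separate check, as it holds identically in both $W$ and $W_i$, and the map $\iota_i$ produces no new strands at the leftmost position of any diagram.
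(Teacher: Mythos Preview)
Your approach is essentially the same as the paper's: both recognize that a $(W,W)$-bimodule map $W\to W_i$ is determined by the image of $1$, and both verify the commutation $r\,\iota_i(1)=\iota_i(1)\,r$ generator by generator, reducing to the dots and crossings that interact with the thick red region.

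One point worth flagging: you have slightly misjudged where the work lies. The crossing verification is in fact the easy part --- the paper checks four crossing cases directly, and in each case the two sides are either literally equal or equal by the single relation~\eqref{symmetricrelations1}; no appeal to Lemma~\ref{bimodulelemma}, to $\psi_j^2$, or to Reidemeister-III is needed. Conversely, the dot case (commuting $y_i$ or $y_{i+1}$ through $v$ when the black strand sits between the two reds of the thick region) is where Lemma~\ref{bimodulelemma} is actually invoked: the paper adds and subtracts a term and then pairs the resulting four summands using \eqref{symmetricrelations1} together with Lemma~\ref{bimodulelemma}. Your sentence ``the particular antisymmetric form of $v$ \ldots\ is engineered precisely so that the individual variables $y_i$ and $y_{i+1}$ also commute with $v$'' is correct in spirit, but carrying it out is exactly this computation. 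So the ingredients in your plan are right; you will just find them deployed in the opposite order from what you expected.
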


\begin{proof}
It needs to be shown that for any $r \in W$ 
\begin{equation}
\label{equivariantcheck1}
r \iota_i(1) = \iota_i(1) r,
\end{equation}
since ~\eqref{bimodmap2} and ~\eqref{bimodmap2a} describe $\iota_i(1) \in W_i $.
We check that ~\eqref{equivariantcheck1} is satisfied for all generators $r$ of $W$.  

If $r=e_j$ is an idempotent then ~\eqref{equivariantcheck1} obviously holds.

If $r$ is a diagram without any crossings and only a single dot on some strand then it suffices to check ~\eqref{equivariantcheck2} instead of ~\eqref{equivariantcheck1}.
\begin{equation}
\label{equivariantcheck2}
r \iota_i(e_j) = \iota_i(e_j) r.
\end{equation}
If $j \neq i+1$ then this is a classical fact.  See for example ~\cite[Section 2.3]{EK}.
If $j=i+1$, then it is easy to see that ~\eqref{equivariantcheck2} holds if the dot is on any strand other than the $i$-th (red) solid strand or $(i+1)$-st (red) solid strand.
We will now check the case that the dot occurs on the $i$-th solid strand.  
This means that we must verify ~\eqref{equivariantcheck1} for $r$ given below.
\begin{equation}
% [inline block 2: 21 envs, 30647 chars -> data_tex | \begin{tikzpicture} ...]

\end{equation*}
in the second coordinate.

These define bimodule maps because using relations in $W_i$, one could move black strands away from the local picture.
\end{proof}

%\begin{lemma}
%There is an isomorphism of $(W(n,1),W(n,1))$-bimodules
%\begin{equation*}
%B_i \otimes_{W(n,1)}  B_{i+1} \otimes_{W(n,1)}  B_i \oplus B_{i+1}
%\cong
%B_{i+1} \otimes_{W(n,1)}  B_i \otimes_{W(n,1)}  B_{i+1} \oplus B_i.
%\end{equation*}
%\end{lemma}

\subsubsection{The maps $X_{ij}$}

For $|i-j|>1$ we define a map 
$X_{ij} \colon W_i \otimes_{W} W_j \rightarrow W_j \otimes_{W} W_i$
determined by
\begin{equation}
\label{mapXij}
% [inline block 3: 15 envs, 29255 chars -> data_tex | \begin{tikzpicture} [scale=0.50]...]

\end{equation}

\begin{prop}
The following are bimodule homomorphisms:
\begin{enumerate}
\item  \ding{91}$_{i,i+1,i}^{i+1,i,i+1} \colon  W_i \otimes_{W}  W_{i+1} \otimes_{W}  W_i \rightarrow
W_{i+1} \otimes_{W}  W_{i} \otimes_{W}  W_{i+1}  $
\item  \ding{91}$_{i+1,i,i+1}^{i,i+1,i} \colon  W_{i+1} \otimes_{W}  W_{i} \otimes_{W}  W_{i+1} \rightarrow
W_{i} \otimes_{W}  W_{i+1} \otimes_{W}  W_{i}  $.
\end{enumerate}
\end{prop}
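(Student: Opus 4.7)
The plan is to establish each map as a well-defined $W$-bimodule homomorphism by verifying two things in parallel: (a) that the definition respects the tensor-product relations over $W$ (i.e. the choice of representative of an element in the source is irrelevant), and (b) that the assignment commutes with left and right multiplication by each generator of $W$. Since both \ding{91}$_{i,i+1,i}^{i+1,i,i+1}$ and \ding{91}$_{i+1,i,i+1}^{i,i+1,i}$ are specified only on the bimodule generators listed in \eqref{noblackinterference2}, \eqref{blackinterference2} and \eqref{noblackinterference4}, \eqref{blackinterference4}, the first step is to enumerate, via the graphical calculus, the distinct ways to pass a dot (red or black) through an $\otimes_W$ sign, and to check each such move preserves the image.

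First I would classify the tensor relations into three families. Family (i): moving an ``outer'' red dot through $\otimes_W$, which is innocuous since it corresponds to multiplication by an element of $R_n \subset W$ that commutes with all diagrams in the local region. Family (ii): transferring a symmetric combination of dots on the thick red strand between the two sides using \eqref{symmetricrelations1} and \eqref{symmetricrelations2}; this realises elements of $W^{i+1}$ (respectively $W^i$) in the middle slot, and the corresponding swap in the target must agree. Family (iii): transferring a dot on the interfering black strand through $\otimes_W$ via relation \eqref{mm+1}, which converts a black dot into a difference of two red dots on the adjacent thick strand. The correction terms in \eqref{f11b}, \eqref{f11d} (resp.\ \eqref{g11b}, \eqref{g11d}) are designed precisely so that these three families of relations are preserved.

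Second I would verify left- and right-$W$-equivariance. Left multiplication acts only on the leftmost tensor factor and right multiplication only on the rightmost, so the dots on the outer red and black strands are handled tautologically. The nontrivial checks are for the crossing generators $\psi_j$ of $W$, which may propagate into the local region near the thick strands, and for dots landing on the strands bordering the first or last tensor factor. These reduce to finitely many case analyses analogous to Case 1--Case 4 in the proof that $\iota_i$ is a bimodule map, and the needed identities all follow from the relations in $W_i$ together with Lemma~\ref{bimodulelemma} and \eqref{symmetricrelations1}, \eqref{symmetricrelations2}.

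The hard part will be Family (iii): the interaction between the interfering black strand and the braid-like reorganisation of the three thick red strands. In the classical Soergel setting \eqref{f11a}--\eqref{f11b} and \eqref{g11a}--\eqref{g11b} are standard, so the genuinely new content is \eqref{f11c}--\eqref{f11d} and \eqref{g11c}--\eqref{g11d}. I expect the main obstacle to reduce to a bookkeeping argument in the spirit of Lemma~\ref{bimodulelemma}, showing that two distinct ways of expressing the same element in the source (one with a black dot on a given segment, the other with the dot absorbed onto a neighbouring thick strand after using \eqref{mm+1}) are sent by \ding{91} to the same output in the target. Once this coherence is established the remainder of the proof is mechanical.
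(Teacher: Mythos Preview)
Your proposal is correct and follows essentially the same route as the paper: the paper too sketches only part (1), writes out how \ding{91}$_{i,i+1,i}^{i+1,i,i+1}$ acts on dot generators (your Families (i) and (ii)), observes that symmetric polynomials in $x_i,x_{i+1}$ and in $x_{i+1},x_{i+2}$ pass across the appropriate tensor signs, and then reduces everything to a finite list of diagrammatic coincidences involving the interfering black strand (your Family (iii)).

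One small redundancy worth flagging: your step (b) is unnecessary in this setting. Unlike the case of $\iota_i\colon W\to W_i$, where the map is specified by a single element $\iota_i(1)$ and one must then \emph{verify} $r\cdot\iota_i(1)=\iota_i(1)\cdot r$, here the map is prescribed on a finite set of $W$-bimodule generators and then \emph{extended} by the bimodule action; left/right $W$-equivariance, including for the crossing generators $\psi_j$, is therefore automatic by construction. The only content is well-definedness, i.e.\ your step (a). The paper accordingly spends the whole proof on (a), explicitly exhibiting pairs of equal elements in the source (obtained by sliding the black strand through the local picture in two different ways) and checking their images agree. So your analogy to Cases~1--4 of the $\iota_i$ proof is slightly off; otherwise your plan matches the paper.
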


\begin{proof}
We will only sketch a proof of the first part of the proposition.  The second part is similar.
It must be checked that that if two elements are equal in $W_i \otimes_{W}  W_{i+1} \otimes_{W}  W_i$ via bimodule relations, then their images under 
\ding{91}$_{i,i+1,i}^{i+1,i,i+1}$ are equal.

The map \ding{91}$_{i,i+1,i}^{i+1,i,i+1}$ acts on dot generators as follows
\begin{align*}
1 \otimes x_i \otimes 1 \otimes 1 &\mapsto x_i \otimes 1 \otimes 1 \otimes 1 
+ x_{i+1} \otimes 1 \otimes 1 \otimes 1 - 1 \otimes 1 \otimes 1 \otimes x_{i+2} \\
1 \otimes x_{i+1} \otimes 1 \otimes 1 &\mapsto 1 \otimes 1 \otimes 1 \otimes x_{i+2} \\
1 \otimes x_{i+2} \otimes 1 \otimes 1 &\mapsto x_{i+2} \otimes 1 \otimes 1 \otimes 1 \\
1 \otimes 1 \otimes x_i \otimes 1 &\mapsto x_i \otimes 1 \otimes 1 \otimes 1 
+ x_{i+1} \otimes 1 \otimes 1 \otimes 1 - 1 \otimes 1 \otimes 1 \otimes x_{i+2} \\
1 \otimes 1 \otimes x_{i+1} \otimes 1 &\mapsto x_{i+2} \otimes 1 \otimes 1 \otimes 1 \\
1 \otimes 1 \otimes x_{i+2} \otimes 1 &\mapsto 1 \otimes 1 \otimes 1 \otimes x_{i+2}.
\end{align*}
Using these formulas it is easy to see that a symmetric polynomial in $x_i$ and $x_{i+1}$ could be moved between the first and second tensor factors as well as between the third and fourth tensor factors.
Similarly,  a symmetric polynomial in $x_{i+1}$ and $x_{i+2}$ could be moved between the second and third tensor factors.

Now we analyze relations involving dashed (black) strands.

It is easy to see that the elements in ~\eqref{welldefined1} both get mapped to ~\eqref{welldefined2}.

\begin{equation}
\label{welldefined1}
\begin{tikzpicture}
[scale=0.50]

\draw (0,0) -- (.5,.25)[red][thick];
\draw (1,0) -- (.5,.25)[red][thick];
\draw (0,2) -- (.5,1.75)[red][thick];
\draw (1,2) -- (.5,1.75)[red][thick];
\draw (2,0) -- (2,2)[red][thick];
\draw (.5,.25) -- (.5,1.75)[red][very thick];

\draw (1,2) -- (1.5,2.25)[red][thick];
\draw (2,2) -- (1.5,2.25)[red][thick];
\draw (1,4) -- (1.5,3.75)[red][thick];
\draw (2,4) -- (1.5,3.75)[red][thick];
\draw (0,2) -- (0,4)[red][thick];
\draw (1.5,2.25) -- (1.5,3.75)[red][very thick];

\draw (0,4) -- (.5,4.25)[red][thick];
\draw (1,4) -- (.5,4.25)[red][thick];
\draw (0,6) -- (.5,5.75)[red][thick];
\draw (1,6) -- (.5,5.75)[red][thick];
\draw (2,4) -- (2,6)[red][thick];
\draw (.5,4.25) -- (.5,5.75)[red][very thick];
\draw (2.5,0) .. controls (.8,3) .. (1.5,6)[black][thick][dashed];

%%%%%%%%

\draw [shift={(7,0)}](0,0) -- (.5,.25)[red][thick];
\draw [shift={(7,0)}](1,0) -- (.5,.25)[red][thick];
\draw [shift={(7,0)}](0,2) -- (.5,1.75)[red][thick];
\draw [shift={(7,0)}](1,2) -- (.5,1.75)[red][thick];
\draw [shift={(7,0)}](2,0) -- (2,2)[red][thick];
\draw [shift={(7,0)}](.5,.25) -- (.5,1.75)[red][very thick];

\draw [shift={(7,0)}](1,2) -- (1.5,2.25)[red][thick];
\draw [shift={(7,0)}](2,2) -- (1.5,2.25)[red][thick];
\draw [shift={(7,0)}](1,4) -- (1.5,3.75)[red][thick];
\draw [shift={(7,0)}](2,4) -- (1.5,3.75)[red][thick];
\draw [shift={(7,0)}](0,2) -- (0,4)[red][thick];
\draw [shift={(7,0)}](1.5,2.25) -- (1.5,3.75)[red][very thick];

\draw [shift={(7,0)}](0,4) -- (.5,4.25)[red][thick];
\draw [shift={(7,0)}](1,4) -- (.5,4.25)[red][thick];
\draw [shift={(7,0)}](0,6) -- (.5,5.75)[red][thick];
\draw [shift={(7,0)}](1,6) -- (.5,5.75)[red][thick];
\draw [shift={(7,0)}](2,4) -- (2,6)[red][thick];
\draw [shift={(7,0)}](.5,4.25) -- (.5,5.75)[red][very thick];
\draw [shift={(7,0)}](.5,3) .. controls (.4,3) .. (1.5,6)[black][thick][dashed];
\draw [shift={(7,0)}](.5,3) .. controls (1,3) .. (1.5,5)[black][thick][dashed];
\draw [shift={(7,0)}](2.5,0) .. controls (2.5,5.5) .. (1.5,5)[black][thick][dashed];

\end{tikzpicture}
\end{equation}

\begin{equation}
\label{welldefined2}
\begin{tikzpicture}
[scale=0.50]

\draw (0,0) -- (.5,.25)[red][thick];
\draw (1,0) -- (.5,.25)[red][thick];
\draw (0,2) -- (.5,1.75)[red][thick];
\draw (1,2) -- (.5,1.75)[red][thick];
\draw (-1,0) -- (-1,2)[red][thick];
\draw (.5,.25) -- (.5,1.75)[red][very thick];

\draw (0,2) -- (-.5,2.25)[red][thick];
\draw (-1,2) -- (-.5,2.25)[red][thick];
\draw (0,4) -- (-.5,3.75)[red][thick];
\draw (-1,4) -- (-.5,3.75)[red][thick];
\draw (1,2) -- (1,4)[red][thick];
\draw (-.5,2.25) -- (-.5,3.75)[red][very thick];

\draw (0,4) -- (.5,4.25)[red][thick];
\draw (1,4) -- (.5,4.25)[red][thick];
\draw (0,6) -- (.5,5.75)[red][thick];
\draw (1,6) -- (.5,5.75)[red][thick];
\draw (-1,4) -- (-1,6)[red][thick];
\draw (.5,4.25) -- (.5,5.75)[red][very thick];
\draw (1.5,0) .. controls (1.5,3) .. (.5,6)[black][thick][dashed];

\filldraw [black] (.5, 6) circle (2pt);

\draw (2.5,3) node{$-$};

%%%%%%%%%

\draw [shift={(4,0)}](0,0) -- (.5,.25)[red][thick];
\draw [shift={(4,0)}](1,0) -- (.5,.25)[red][thick];
\draw [shift={(4,0)}](0,2) -- (.5,1.75)[red][thick];
\draw [shift={(4,0)}](1,2) -- (.5,1.75)[red][thick];
\draw [shift={(4,0)}](-1,0) -- (-1,2)[red][thick];
\draw [shift={(4,0)}](.5,.25) -- (.5,1.75)[red][very thick];

\draw [shift={(4,0)}](0,2) -- (-.5,2.25)[red][thick];
\draw [shift={(4,0)}](-1,2) -- (-.5,2.25)[red][thick];
\draw [shift={(4,0)}](0,4) -- (-.5,3.75)[red][thick];
\draw [shift={(4,0)}](-1,4) -- (-.5,3.75)[red][thick];
\draw [shift={(4,0)}](1,2) -- (1,4)[red][thick];
\draw [shift={(4,0)}](-.5,2.25) -- (-.5,3.75)[red][very thick];

\draw [shift={(4,0)}](0,4) -- (.5,4.25)[red][thick];
\draw [shift={(4,0)}](1,4) -- (.5,4.25)[red][thick];
\draw [shift={(4,0)}](0,6) -- (.5,5.75)[red][thick];
\draw [shift={(4,0)}](1,6) -- (.5,5.75)[red][thick];
\draw [shift={(4,0)}](-1,4) -- (-1,6)[red][thick];
\draw[shift={(4,0)}] (.5,4.25) -- (.5,5.75)[red][very thick];
\draw [shift={(4,0)}](1.5,0) .. controls (1.5,3) .. (.5,6)[black][thick][dashed];

\filldraw [red] [shift={(4,0)}](1, 0) circle (2pt);

\end{tikzpicture}
\end{equation}

It is a routine but slightly lengthier calculation to verify that both elements in ~\eqref{welldefined3} get mapped to the same element.

\begin{equation}
\label{welldefined3}
\begin{tikzpicture}
[scale=0.50]

\draw (0,0) -- (.5,.25)[red][thick];
\draw (1,0) -- (.5,.25)[red][thick];
\draw (0,2) -- (.5,1.75)[red][thick];
\draw (1,2) -- (.5,1.75)[red][thick];
\draw (2,0) -- (2,2)[red][thick];
\draw (.5,.25) -- (.5,1.75)[red][very thick];

\draw (1,2) -- (1.5,2.25)[red][thick];
\draw (2,2) -- (1.5,2.25)[red][thick];
\draw (1,4) -- (1.5,3.75)[red][thick];
\draw (2,4) -- (1.5,3.75)[red][thick];
\draw (0,2) -- (0,4)[red][thick];
\draw (1.5,2.25) -- (1.5,3.75)[red][very thick];

\draw (0,4) -- (.5,4.25)[red][thick];
\draw (1,4) -- (.5,4.25)[red][thick];
\draw (0,6) -- (.5,5.75)[red][thick];
\draw (1,6) -- (.5,5.75)[red][thick];
\draw (2,4) -- (2,6)[red][thick];
\draw (.5,4.25) -- (.5,5.75)[red][very thick];
\draw (2.5,0) .. controls (.8,3) .. (1.5,6)[black][thick][dashed];
\filldraw [red] (0, 3) circle (2pt);

%%%%%%%%

\draw [shift={(7,0)}](0,0) -- (.5,.25)[red][thick];
\draw [shift={(7,0)}](1,0) -- (.5,.25)[red][thick];
\draw [shift={(7,0)}](0,2) -- (.5,1.75)[red][thick];
\draw [shift={(7,0)}](1,2) -- (.5,1.75)[red][thick];
\draw [shift={(7,0)}](2,0) -- (2,2)[red][thick];
\draw [shift={(7,0)}](.5,.25) -- (.5,1.75)[red][very thick];

\draw [shift={(7,0)}](1,2) -- (1.5,2.25)[red][thick];
\draw [shift={(7,0)}](2,2) -- (1.5,2.25)[red][thick];
\draw [shift={(7,0)}](1,4) -- (1.5,3.75)[red][thick];
\draw [shift={(7,0)}](2,4) -- (1.5,3.75)[red][thick];
\draw [shift={(7,0)}](0,2) -- (0,4)[red][thick];
\draw [shift={(7,0)}](1.5,2.25) -- (1.5,3.75)[red][very thick];

\draw [shift={(7,0)}](0,4) -- (.5,4.25)[red][thick];
\draw [shift={(7,0)}](1,4) -- (.5,4.25)[red][thick];
\draw [shift={(7,0)}](0,6) -- (.5,5.75)[red][thick];
\draw [shift={(7,0)}](1,6) -- (.5,5.75)[red][thick];
\draw [shift={(7,0)}](2,4) -- (2,6)[red][thick];
\draw [shift={(7,0)}](.5,4.25) -- (.5,5.75)[red][very thick];
\draw [shift={(7,0)}](.5,3) .. controls (.4,3) .. (1.5,6)[black][thick][dashed];
\draw [shift={(7,0)}](.5,3) .. controls (1,3) .. (1.5,5)[black][thick][dashed];
\draw [shift={(7,0)}](2.5,0) .. controls (2.5,5.5) .. (1.5,5)[black][thick][dashed];
\filldraw [red] [shift={(7,0)}](0, 3) circle (2pt);

\end{tikzpicture}
\end{equation}

Similarly one checks that for $a=0,1$ the elements in ~\eqref{welldefined4} get mapped to the same element with the case $a=0$ being slightly easier than $a=1$.

\begin{equation}
\label{welldefined4}
\begin{tikzpicture}
[scale=0.50]

\draw (0,0) -- (.5,.25)[red][thick];
\draw (1,0) -- (.5,.25)[red][thick];
\draw (0,2) -- (.5,1.75)[red][thick];
\draw (1,2) -- (.5,1.75)[red][thick];
\draw (2,0) -- (2,2)[red][thick];
\draw (.5,.25) -- (.5,1.75)[red][very thick];

\draw (1,2) -- (1.5,2.25)[red][thick];
\draw (2,2) -- (1.5,2.25)[red][thick];
\draw (1,4) -- (1.5,3.75)[red][thick];
\draw (2,4) -- (1.5,3.75)[red][thick];
\draw (0,2) -- (0,4)[red][thick];
\draw (1.5,2.25) -- (1.5,3.75)[red][very thick];

\draw (0,4) -- (.5,4.25)[red][thick];
\draw (1,4) -- (.5,4.25)[red][thick];
\draw (0,6) -- (.5,5.75)[red][thick];
\draw (1,6) -- (.5,5.75)[red][thick];
\draw (2,4) -- (2,6)[red][thick];
\draw (.5,4.25) -- (.5,5.75)[red][very thick];
\draw (2.5,6) .. controls (.8,3) .. (1.5,0)[black][thick][dashed];
\filldraw [red] (0, 3) circle (2pt);
\draw (-.5,3) node{$a$};

%%%%%%%%

\draw [shift={(7,0)}](0,0) -- (.5,.25)[red][thick];
\draw [shift={(7,0)}](1,0) -- (.5,.25)[red][thick];
\draw [shift={(7,0)}](0,2) -- (.5,1.75)[red][thick];
\draw [shift={(7,0)}](1,2) -- (.5,1.75)[red][thick];
\draw [shift={(7,0)}](2,0) -- (2,2)[red][thick];
\draw [shift={(7,0)}](.5,.25) -- (.5,1.75)[red][very thick];

\draw [shift={(7,0)}](1,2) -- (1.5,2.25)[red][thick];
\draw [shift={(7,0)}](2,2) -- (1.5,2.25)[red][thick];
\draw [shift={(7,0)}](1,4) -- (1.5,3.75)[red][thick];
\draw [shift={(7,0)}](2,4) -- (1.5,3.75)[red][thick];
\draw [shift={(7,0)}](0,2) -- (0,4)[red][thick];
\draw [shift={(7,0)}](1.5,2.25) -- (1.5,3.75)[red][very thick];

\draw [shift={(7,0)}](0,4) -- (.5,4.25)[red][thick];
\draw [shift={(7,0)}](1,4) -- (.5,4.25)[red][thick];
\draw [shift={(7,0)}](0,6) -- (.5,5.75)[red][thick];
\draw [shift={(7,0)}](1,6) -- (.5,5.75)[red][thick];
\draw [shift={(7,0)}](2,4) -- (2,6)[red][thick];
\draw [shift={(7,0)}](.5,4.25) -- (.5,5.75)[red][very thick];
\draw [shift={(7,0)}](.5,3) .. controls (.4,3) .. (1.5,0)[black][thick][dashed];
\draw [shift={(7,0)}](.5,3) .. controls (1,3) .. (1.5,1)[black][thick][dashed];
\draw [shift={(7,0)}](2.5,6) .. controls (2.5,1) .. (1.5,1)[black][thick][dashed];
\filldraw [red] [shift={(7,0)}](0, 3) circle (2pt);
\draw [shift={(7,0)}](-.5,3) node{$a$};

\end{tikzpicture}
\end{equation}

For $a=0,1$ the images of the elements in ~\eqref{welldefined5} under \ding{91}$_{i,i+1,i}^{i+1,i,i+1}$ are the same.

\begin{equation}
\label{welldefined5}
\begin{tikzpicture}
[scale=0.50]

\draw (0,0) -- (.5,.25)[red][thick];
\draw (1,0) -- (.5,.25)[red][thick];
\draw (0,2) -- (.5,1.75)[red][thick];
\draw (1,2) -- (.5,1.75)[red][thick];
\draw (2,0) -- (2,2)[red][thick];
\draw (.5,.25) -- (.5,1.75)[red][very thick];

\draw (1,2) -- (1.5,2.25)[red][thick];
\draw (2,2) -- (1.5,2.25)[red][thick];
\draw (1,4) -- (1.5,3.75)[red][thick];
\draw (2,4) -- (1.5,3.75)[red][thick];
\draw (0,2) -- (0,4)[red][thick];
\draw (1.5,2.25) -- (1.5,3.75)[red][very thick];

\draw (0,4) -- (.5,4.25)[red][thick];
\draw (1,4) -- (.5,4.25)[red][thick];
\draw (0,6) -- (.5,5.75)[red][thick];
\draw (1,6) -- (.5,5.75)[red][thick];
\draw (2,4) -- (2,6)[red][thick];
\draw (.5,4.25) -- (.5,5.75)[red][very thick];
\draw (-.5,6) .. controls (1.3,5.9) .. (1.4,0)[black][thick][dashed];
\filldraw [red] (0, 3) circle (2pt);
\draw (-.5,3) node{$a$};

%%%%%%%%

\draw [shift={(7,0)}](0,0) -- (.5,.25)[red][thick];
\draw [shift={(7,0)}](1,0) -- (.5,.25)[red][thick];
\draw [shift={(7,0)}](0,2) -- (.5,1.75)[red][thick];
\draw [shift={(7,0)}](1,2) -- (.5,1.75)[red][thick];
\draw [shift={(7,0)}](2,0) -- (2,2)[red][thick];
\draw [shift={(7,0)}](.5,.25) -- (.5,1.75)[red][very thick];

\draw [shift={(7,0)}](1,2) -- (1.5,2.25)[red][thick];
\draw [shift={(7,0)}](2,2) -- (1.5,2.25)[red][thick];
\draw [shift={(7,0)}](1,4) -- (1.5,3.75)[red][thick];
\draw [shift={(7,0)}](2,4) -- (1.5,3.75)[red][thick];
\draw [shift={(7,0)}](0,2) -- (0,4)[red][thick];
\draw [shift={(7,0)}](1.5,2.25) -- (1.5,3.75)[red][very thick];

\draw [shift={(7,0)}](0,4) -- (.5,4.25)[red][thick];
\draw [shift={(7,0)}](1,4) -- (.5,4.25)[red][thick];
\draw [shift={(7,0)}](0,6) -- (.5,5.75)[red][thick];
\draw [shift={(7,0)}](1,6) -- (.5,5.75)[red][thick];
\draw [shift={(7,0)}](2,4) -- (2,6)[red][thick];
\draw [shift={(7,0)}](.5,4.25) -- (.5,5.75)[red][very thick];
\draw [shift={(7,0)}](-.5,6) .. controls (-.4,4.5) .. (.5,3)[black][thick][dashed];
\draw [shift={(7,0)}](1.5,0) .. controls (1,5.5) .. (.5,3)[black][thick][dashed];

%\draw [shift={(7,0)}](.5,3) .. controls (.4,3) .. (1.5,0)[black][thick][dashed];
%\draw [shift={(7,0)}](.5,3) .. controls (1,3) .. (1.5,1)[black][thick][dashed];
%\draw [shift={(7,0)}](2.5,6) .. controls (2.5,1) .. (1.5,1)[black][thick][dashed];
\filldraw [red] [shift={(7,0)}](0, 3) circle (2pt);
\draw [shift={(7,0)}](-.5,3) node{$a$};

\end{tikzpicture}
\end{equation}

For $a=0,1$ the images of the elements in ~\eqref{welldefined6} under \ding{91}$_{i,i+1,i}^{i+1,i,i+1}$ are the same.

\begin{equation}
\label{welldefined6}
\begin{tikzpicture}
[scale=0.50]

\draw (0,0) -- (.5,.25)[red][thick];
\draw (1,0) -- (.5,.25)[red][thick];
\draw (0,2) -- (.5,1.75)[red][thick];
\draw (1,2) -- (.5,1.75)[red][thick];
\draw (2,0) -- (2,2)[red][thick];
\draw (.5,.25) -- (.5,1.75)[red][very thick];

\draw (1,2) -- (1.5,2.25)[red][thick];
\draw (2,2) -- (1.5,2.25)[red][thick];
\draw (1,4) -- (1.5,3.75)[red][thick];
\draw (2,4) -- (1.5,3.75)[red][thick];
\draw (0,2) -- (0,4)[red][thick];
\draw (1.5,2.25) -- (1.5,3.75)[red][very thick];

\draw (0,4) -- (.5,4.25)[red][thick];
\draw (1,4) -- (.5,4.25)[red][thick];
\draw (0,6) -- (.5,5.75)[red][thick];
\draw (1,6) -- (.5,5.75)[red][thick];
\draw (2,4) -- (2,6)[red][thick];
\draw (.5,4.25) -- (.5,5.75)[red][very thick];
\draw (-.5,0) .. controls (1.3,.1) .. (1.4,6)[black][thick][dashed];
\filldraw [red] (0, 3) circle (2pt);
\draw (-.5,3) node{$a$};

%%%%%%%%

\draw [shift={(7,0)}](0,0) -- (.5,.25)[red][thick];
\draw [shift={(7,0)}](1,0) -- (.5,.25)[red][thick];
\draw [shift={(7,0)}](0,2) -- (.5,1.75)[red][thick];
\draw [shift={(7,0)}](1,2) -- (.5,1.75)[red][thick];
\draw [shift={(7,0)}](2,0) -- (2,2)[red][thick];
\draw [shift={(7,0)}](.5,.25) -- (.5,1.75)[red][very thick];

\draw [shift={(7,0)}](1,2) -- (1.5,2.25)[red][thick];
\draw [shift={(7,0)}](2,2) -- (1.5,2.25)[red][thick];
\draw [shift={(7,0)}](1,4) -- (1.5,3.75)[red][thick];
\draw [shift={(7,0)}](2,4) -- (1.5,3.75)[red][thick];
\draw [shift={(7,0)}](0,2) -- (0,4)[red][thick];
\draw [shift={(7,0)}](1.5,2.25) -- (1.5,3.75)[red][very thick];

\draw [shift={(7,0)}](0,4) -- (.5,4.25)[red][thick];
\draw [shift={(7,0)}](1,4) -- (.5,4.25)[red][thick];
\draw [shift={(7,0)}](0,6) -- (.5,5.75)[red][thick];
\draw [shift={(7,0)}](1,6) -- (.5,5.75)[red][thick];
\draw [shift={(7,0)}](2,4) -- (2,6)[red][thick];
\draw [shift={(7,0)}](.5,4.25) -- (.5,5.75)[red][very thick];
\draw [shift={(7,0)}](-.5,0) .. controls (-.4,1.5) .. (.5,3)[black][thick][dashed];
\draw [shift={(7,0)}](1.5,6) .. controls (1,.5) .. (.5,3)[black][thick][dashed];

%\draw [shift={(7,0)}](.5,3) .. controls (.4,3) .. (1.5,0)[black][thick][dashed];
%\draw [shift={(7,0)}](.5,3) .. controls (1,3) .. (1.5,1)[black][thick][dashed];
%\draw [shift={(7,0)}](2.5,6) .. controls (2.5,1) .. (1.5,1)[black][thick][dashed];
\filldraw [red] [shift={(7,0)}](0, 3) circle (2pt);
\draw [shift={(7,0)}](-.5,3) node{$a$};

\end{tikzpicture}
\end{equation}

%The map \ding{91}$_{i+1,i,i+1}^{i,i+1,i}$ acts on dot generators as follows
%\begin{align*}
%1 \otimes x_{i} \otimes 1 \otimes 1 &\mapsto x_i \otimes 1 \otimes 1 \otimes 1 \\
%1 \otimes x_{i+1} \otimes 1 \otimes 1 &\mapsto 1 \otimes 1 \otimes 1 \otimes x_{i} %\\
%1 \otimes x_{i+2} \otimes 1 \otimes 1 &\mapsto 1 \otimes 1 \otimes 1 \otimes x_{i+1}
%+ 1 \otimes 1 \otimes 1 \otimes  x_{i+2} - x_i \otimes 1 \otimes 1 \otimes 1 \\
%%%%%%
%1 \otimes 1 \otimes x_i \otimes 1 &\mapsto 1 \otimes 1 \otimes 1 \otimes x_i \\
%1 \otimes 1 \otimes x_{i+1} \otimes 1 &\mapsto x_{i} \otimes 1 \otimes 1 \otimes 1 \\
%1 \otimes 1 \otimes x_{i+2} \otimes 1 &\mapsto 
%1 \otimes 1 \otimes 1 \otimes x_{i+1} + 1 \otimes 1 \otimes 1 \otimes x_{i+2} -
%x_i \otimes 1 \otimes 1 \otimes 1.
%\end{align*}
\end{proof}

\subsection{Main results}
Let $\mathcal{W}=\mathcal{W}(n,1)$ be the monoidal category of $W(n,1)$-bimodules generated by the $W_i$ for $i=1,\ldots,n-1$.  
Recall that $W$ was shorthand notation for $W(n,1)$.
We will define a functor $F \colon \mathcal{H} \rightarrow \mathcal{W}$.

On objects:
\begin{equation*}
F \colon (i_1, \ldots, i_r) \mapsto W_{i_1} \otimes \cdots \otimes W_{i_r}.
\end{equation*}
On morphisms:

\begin{equation*}
\begin{tikzpicture}
\draw (0,.5) node{$F \colon $};
\draw (1,0) -- (1,1)[green][thick];
\draw (.75,.25) node{$i$};
\draw (1.5,.5) node{$\mapsto$};
\draw (3,.5) node{$\Id \colon W_i \rightarrow W_i$};

\end{tikzpicture}
\end{equation*}

\begin{equation*}
\begin{tikzpicture}
\draw (0,.5) node{$F \colon $};
\draw (1,0) -- (1,1)[green][thick];
\draw (.85,.15) node{$i$};
\draw (.75,.5) node{$x_j$};

\draw (1.5,.5) node{$\mapsto$};
\draw (3,.5) node{$x_j . \colon W_i \rightarrow W_i$};

\end{tikzpicture}
\end{equation*}

\begin{equation*}
\begin{tikzpicture}
\draw (0,.5) node{$F \colon $};
\draw (.8,0) -- (.8,1)[green][thick];
\draw (.65,.15) node{$i$};
\draw (1.05,.5) node{$x_j$};

\draw (1.5,.5) node{$\mapsto$};
\draw (3,.5) node{$.x_j \colon W_i \rightarrow W_i$};

\end{tikzpicture}
\end{equation*}

\begin{equation*}
\begin{tikzpicture}
\draw (0,.5) node{$F \colon $};
\draw (1,0) -- (1,.5)[green][thick];
\filldraw [green] (1, .5) circle (2pt);
\draw (.75,.25) node{$i$};
\draw (1.5,.5) node{$\mapsto$};
\draw (3,.5) node{$\epsilon_i \colon W_i \rightarrow {\bf 1}$};

\end{tikzpicture}
\end{equation*}

\begin{equation*}
\begin{tikzpicture}
\draw (0,.5) node{$F \colon $};
\draw (1,1) -- (1,.5)[green][thick];
\filldraw [green] (1, .5) circle (2pt);
\draw (.75,.75) node{$i$};
\draw (1.5,.5) node{$\mapsto$};
\draw (3,.5) node{$\iota_i \colon {\bf 1} \rightarrow W_i$};

\end{tikzpicture}
\end{equation*}

\begin{equation*}
\begin{tikzpicture}
\draw (-.5,.5) node{$F \colon $};

\draw (.5,1) -- (1,.5)[green][thick];
\draw (1.5,1) -- (1,.5)[green][thick];
\draw (1,.5) -- (1,0)[green][thick];
\draw (.8,.25) node{$i$};
\draw (1.55,.85) node{$i$};
\draw (.4,.85) node{$i$};

\draw (2,.5) node{$\mapsto$};
\draw (4,.5) node{\ding{116}$_i$ $\colon W_i \rightarrow W_i \otimes W_i$};

\end{tikzpicture}
\end{equation*}

\begin{equation*}
\begin{tikzpicture}
\draw (-.5,.5) node{$F \colon $};

\draw (.5,0) -- (1,.5)[green][thick];
\draw (1.5,0) -- (1,.5)[green][thick];
\draw (1,.5) -- (1,1)[green][thick];
\draw (.5,.35) node{$i$};
\draw (1.5,.35) node{$i$};
\draw (.85,.85) node{$i$};

\draw (2,.5) node{$\mapsto$};
\draw (4,.5) node{\ding{115}$_i$ $\colon  W_i \otimes W_i \rightarrow W_i$};

\end{tikzpicture}
\end{equation*}

\begin{equation*}
\begin{tikzpicture}
\draw (-.5,.5) node{$F \colon $};

\draw (.5,0) -- (1.5,1)[green][thick];
\draw (1.5,0) -- (.5,1)[green][thick];
\draw (.5,.25) node{$i$};
\draw (1.5,.25) node{$j$};
\draw (.5,.75) node{$j$};
\draw (1.5,.75) node{$i$};

\draw (2.5,.5) node{$\mapsto$};

\draw (5,.5) node{$X_{ij}  \colon  W_i \otimes W_j \rightarrow W_j \otimes W_i$};

\end{tikzpicture}
\end{equation*}

\begin{equation*}
\begin{tikzpicture}

\draw (-.5,.5) node{$F \colon $};

\draw (.5,0) -- (1.5,1)[green][thick];
\draw (1.5,0) -- (.5,1)[green][thick];
\draw (1,0) -- (1,1)[green][thick];
\draw (.5,.25) node{$i$};
\draw (.5,.75) node{$j$};
\draw (1.5,.25) node{$i$};
\draw (1.5,.75) node{$j$};
\draw (.9,.15) node{$j$};
\draw (.9,.9) node{$i$};

\draw (2.5,.5) node{$\mapsto$};

\draw (6,.5) node{ \ding{91}$_{i,j,i}^{j,i,j}$  $ \colon  W_i \otimes W_j \otimes W_i \rightarrow W_j \otimes W_i \otimes W_j$};

\end{tikzpicture}
\end{equation*}

\begin{theorem}
\label{heckeactiontheorem}
The assignment $F \colon \mathcal{H} \rightarrow \mathcal{W}$ is a functor
from the Hecke category $\mathcal{H}$ to the homotopy category of $W(n,1)$-bimodules generated by the $W_i$.
\end{theorem}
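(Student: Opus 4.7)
The plan is to verify that $F$ respects each of the defining relations of $\mathcal{H}$ (equations \eqref{relation3.1,3.2,3.3} through the final braid relations of Section~\ref{sectionhecke}). Since $\mathcal{H}$ is presented by admissible graphs modulo these local relations, and since $F$ is defined on each generating planar tangle, it suffices to check that for each relation $L = R$ in $\mathcal{H}$, the corresponding two composites of bimodule homomorphisms agree on all generators of the source bimodule. The source bimodules are always tensor products of the $W_j$'s, and by the bimodule property it suffices to evaluate on the canonical generators (diagrams with only thick red strands, possibly with a single extra dashed black strand passing through).

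First I would dispose of the easy families. The polynomial slide relations \eqref{relation3.1,3.2,3.3} follow directly from the defining relation \eqref{symmetricrelations1}--\eqref{symmetricrelations2} in $W_i$, which say that $R^i$-symmetric polynomials slide across the thick strand. Associativity and isotopy relations \eqref{relation3.6}, \eqref{relation3.10}--\eqref{relation3.13} for the trivalent and lollipop generators follow from the fact that $W_i\otimes_W W_i \cong W_i\langle -1\rangle\oplus W_i\langle 1\rangle$ (the lemma just before the definition of $X_{ij}$) together with the explicit formulas for $\epsilon_i, \iota_i,$~\ding{116}$_i,$~\ding{115}$_i$. The two-dot relations \eqref{relation3.16}--\eqref{relation3.18} are checked by computing $\epsilon_i\circ\iota_i$ and its variants directly on the canonical generators; the ``broken barbell'' equalities of \eqref{bimodmap1conseq1}--\eqref{bimodmap1conseq2} supply what is needed.

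Next I would handle the distant-commutation relations. For $|i-j|>1$, the bimodule $W^i$ and $W^j$ commute, and the map $X_{ij}$ of \eqref{mapXij} is essentially the obvious swap. Relations \eqref{relation3.28}--\eqref{relation3.33} then reduce to checking that far-apart generators in $W$ and $W_i,W_j$ literally commute at the level of diagrams, which is evident from the graphical presentation of $W_i$ (no black strand can be trapped between two far-apart thick regions).

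The heart of the proof is the three-colored braid block, relations \eqref{relation3.23}--\eqref{relation3.27}. Here one must check that the composite \ding{91}$_{j,i,j}^{i,j,i}\circ$\ding{91}$_{i,j,i}^{j,i,j}$ equals the identity up to the corrections dictated by the Hecke graphical calculus, and that \ding{91}$_{i,j,i}^{j,i,j}$ interacts correctly with the trivalents and with the dot generators. For the subdiagrams where no dashed black strand passes through, this is exactly the classical Elias--Khovanov check for regular Soergel bimodules for $S_n$, already recorded in \cite{EK}, because in that range our formulas \eqref{f11a}, \eqref{f11b}, \eqref{g11a}, \eqref{g11b} reproduce the classical ones verbatim. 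The new content is confined to the single ``black-interference'' generator of \eqref{blackinterference2} and \eqref{blackinterference4}; for these one must verify that the additional dashed-strand terms in \eqref{f11c}, \eqref{f11d}, \eqref{g11c}, \eqref{g11d} are consistent with the Hecke relations. This reduces to a finite list of pictorial identities that can be verified by the same technique used in the proof that \ding{91}$_{i,i+1,i}^{i+1,i,i+1}$ is a well-defined bimodule map (equations \eqref{welldefined1}--\eqref{welldefined6}): add and subtract a term to convert interferences between black and thick red strands into a sum of diagrams with free black strands, then apply the bimodule relations \eqref{symmetricrelations1}--\eqref{blackthickred2} together with Lemma~\ref{bimodulelemma}.

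The main obstacle will be the Zamolodchikov-type relation \eqref{relation3.27}, where two \ding{91}-morphisms must be compared on all four generating configurations of $W_i\otimes_W W_{i+1}\otimes_W W_i\otimes_W W_{i+1}$, and where the correction terms from \eqref{f11d} and \eqref{g11d} must conspire to cancel against one another. This is the step that genuinely uses the cyclotomic relation \eqref{cyclotomic} (via the identity $e_1=0$ and the consequent polynomial identity of Lemma~\ref{centerlemma1}), and it is where one expects the bookkeeping of dots on thick strands to be most delicate.
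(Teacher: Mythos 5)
Your overall strategy matches the paper's: verify each defining relation of $\mathcal{H}$, rely on the Elias--Khovanov computation in \cite{EK} for the generators without a dashed black strand, and handle the new ``black-interference'' generators of $W_i\otimes_W W_j\otimes_W W_i$ by direct calculation using the bimodule relations. The paper carries this out in detail only for relation \eqref{relation3.23}, leaving the rest as routine; your plan to treat the other relations by the same technique is consistent with that.

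However, there is a genuine error in your final paragraph. You claim that verifying the Zamolodchikov relation \eqref{relation3.27} ``genuinely uses the cyclotomic relation \eqref{cyclotomic} (via the identity $e_1=0$ and the consequent polynomial identity of Lemma~\ref{centerlemma1}).'' This is incorrect: the paper's remark immediately following Corollary~\ref{braidcorollarysigma} states explicitly that ``the proof of Theorem~\ref{heckeactiontheorem} did not depend upon the cyclotomic relation in $W(n,1)$ and so the corresponding statement is also true for $\widetilde{W}(n,1)$.'' The bimodule maps $\epsilon_i$, $\iota_i$, \ding{116}$_i$, \ding{115}$_i$, $X_{ij}$, and \ding{91} are all defined by local graphical rules and the verification that they satisfy the Elias--Khovanov relations is a purely local computation on generators, none of which requires pulling a black strand all the way to the left of the diagram (which is what the cyclotomic relation governs). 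Moreover Lemma~\ref{centerlemma1} concerns the center of $W(n,1)$, not the bimodule calculus; it plays no role here. Dropping this claim would make your outline faithful to the paper's argument.
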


\begin{proof}
There are many relations to check on the various generators of the bimodules.
Relations on generators where no black strand is present were checked in ~\cite{EK}.

We will only prove here that relation ~\eqref{relation3.23} holds. 
They are all routine calculations.

The fact that ~\eqref{relation3.23} holds for generators ~\eqref{noblackinterference2} (with $a=0,1$) follows from ~\cite{EK}.  We must only check the relation on the two other generators of the bimodule $ W_i \otimes W_{i+1} \otimes W_i$.
We compute the first term on the right hand side of ~\eqref{relation3.23} on the generator ~\eqref{blackinterference2} with $a=0$.

\begin{equation}
\label{1sttermI}
\begin{tikzpicture}
[scale=0.50]

\draw (0,0) -- (.5,.25)[red][thick];
\draw (1,0) -- (.5,.25)[red][thick];
\draw (0,2) -- (.5,1.75)[red][thick];
\draw (1,2) -- (.5,1.75)[red][thick];
\draw (2,0) -- (2,2)[red][thick];
\draw (.5,.25) -- (.5,1.75)[red][very thick];

\draw (1,2) -- (1.5,2.25)[red][thick];
\draw (2,2) -- (1.5,2.25)[red][thick];
\draw (1,4) -- (1.5,3.75)[red][thick];
\draw (2,4) -- (1.5,3.75)[red][thick];
\draw (0,2) -- (0,4)[red][thick];
\draw (1.5,2.25) -- (1.5,3.75)[red][very thick];

\draw (0,4) -- (.5,4.25)[red][thick];
\draw (1,4) -- (.5,4.25)[red][thick];
\draw (0,6) -- (.5,5.75)[red][thick];
\draw (1,6) -- (.5,5.75)[red][thick];
\draw (2,4) -- (2,6)[red][thick];
\draw (.5,4.25) -- (.5,5.75)[red][very thick];
\draw (1.5,0) .. controls (1,3) .. (1.5,6)[black][thick][dashed];

\draw (3,3) node{$\mapsto$};

%%%%%%%%%
%\draw (3.5,3) node{$-$};

\draw (5,0) -- (5.5,.25)[red][thick];
\draw (6,0) -- (5.5,.25)[red][thick];
\draw (5,2) -- (5.5,1.75)[red][thick];
\draw (6,2) -- (5.5,1.75)[red][thick];
\draw (4,0) -- (4,2)[red][thick];
\draw (5.5,.25) -- (5.5,1.75)[red][very thick];

\draw (5,2) -- (4.5,2.25)[red][thick];
\draw (4,2) -- (4.5,2.25)[red][thick];
\draw (5,4) -- (4.5,3.75)[red][thick];
\draw (4,4) -- (4.5,3.75)[red][thick];
\draw (6,2) -- (6,4)[red][thick];
\draw (4.5,2.25) -- (4.5,3.75)[red][very thick];

\draw (5,4) -- (5.5,4.25)[red][thick];
\draw (6,4) -- (5.5,4.25)[red][thick];
\draw (5,6) -- (5.5,5.75)[red][thick];
\draw (6,6) -- (5.5,5.75)[red][thick];
\draw (4,4) -- (4,6)[red][thick];
\draw (5.5,4.25) -- (5.5,5.75)[red][very thick];
\draw (5.6,0) .. controls (6.5,3) .. (5.6,6)[black][thick][dashed];

\draw (7,3) node{$\mapsto$};

%%%%%

\draw [shift={(8,0)}](0,0) -- (.5,.25)[red][thick];
\draw [shift={(8,0)}](1,0) -- (.5,.25)[red][thick];
\draw [shift={(8,0)}](0,2) -- (.5,1.75)[red][thick];
\draw [shift={(8,0)}](1,2) -- (.5,1.75)[red][thick];
\draw [shift={(8,0)}](2,0) -- (2,2)[red][thick];
\draw [shift={(8,0)}](.5,.25) -- (.5,1.75)[red][very thick];

\draw [shift={(8,0)}](1,2) -- (1.5,2.25)[red][thick];
\draw [shift={(8,0)}](2,2) -- (1.5,2.25)[red][thick];
\draw [shift={(8,0)}](1,4) -- (1.5,3.75)[red][thick];
\draw [shift={(8,0)}](2,4) -- (1.5,3.75)[red][thick];
\draw [shift={(8,0)}](0,2) -- (0,4)[red][thick];
\draw [shift={(8,0)}](1.5,2.25) -- (1.5,3.75)[red][very thick];

\draw [shift={(8,0)}](0,4) -- (.5,4.25)[red][thick];
\draw [shift={(8,0)}](1,4) -- (.5,4.25)[red][thick];
\draw [shift={(8,0)}](0,6) -- (.5,5.75)[red][thick];
\draw [shift={(8,0)}](1,6) -- (.5,5.75)[red][thick];
\draw [shift={(8,0)}](2,4) -- (2,6)[red][thick];
\draw [shift={(8,0)}](.5,4.25) -- (.5,5.75)[red][very thick];
\draw [shift={(8,0)}](1.5,0) .. controls (2.5,3) .. (1.5,6)[black][thick][dashed];

\end{tikzpicture}
\end{equation}

Applying four successive maps for the second term in the right hand side of ~\eqref{relation3.23} gives

\begin{equation}
\label{2ndtermI}
\begin{tikzpicture}
[scale=0.50]

\draw (0,0) -- (.5,.25)[red][thick];
\draw (1,0) -- (.5,.25)[red][thick];
\draw (0,2) -- (.5,1.75)[red][thick];
\draw (1,2) -- (.5,1.75)[red][thick];
\draw (2,0) -- (2,2)[red][thick];
\draw (.5,.25) -- (.5,1.75)[red][very thick];

\draw (1,2) -- (1.5,2.25)[red][thick];
\draw (2,2) -- (1.5,2.25)[red][thick];
\draw (1,4) -- (1.5,3.75)[red][thick];
\draw (2,4) -- (1.5,3.75)[red][thick];
\draw (0,2) -- (0,4)[red][thick];
\draw (1.5,2.25) -- (1.5,3.75)[red][very thick];

\draw (0,4) -- (.5,4.25)[red][thick];
\draw (1,4) -- (.5,4.25)[red][thick];
\draw (0,6) -- (.5,5.75)[red][thick];
\draw (1,6) -- (.5,5.75)[red][thick];
\draw (2,4) -- (2,6)[red][thick];
\draw (.5,4.25) -- (.5,5.75)[red][very thick];
\draw (1.5,0) .. controls (1,3) .. (1.5,6)[black][thick][dashed];

\draw (2.9,3) node{$\mapsto$};

%%%%%%%%%
\draw (3.5,3) node{$-$};

\draw [shift={(4,0)}](0,0) -- (.5,.25)[red][thick];
\draw [shift={(4,0)}](1,0) -- (.5,.25)[red][thick];
\draw [shift={(4,0)}](0,2) -- (.5,1.75)[red][thick];
\draw [shift={(4,0)}](1,2) -- (.5,1.75)[red][thick];
\draw [shift={(4,0)}](2,0) -- (2,2)[red][thick];
\draw [shift={(4,0)}](.5,.25) -- (.5,1.75)[red][very thick];

\draw [shift={(4,0)}](1,2) -- (1.5,2.25)[red][thick];
\draw [shift={(4,0)}](2,2) -- (1.5,2.25)[red][thick];
\draw [shift={(4,0)}](1,4) -- (1.5,3.75)[red][thick];
\draw [shift={(4,0)}](2,4) -- (1.5,3.75)[red][thick];
\draw [shift={(4,0)}](0,2) -- (0,4)[red][thick];
\draw [shift={(4,0)}](1.5,2.25) -- (1.5,3.75)[red][very thick];

\draw [shift={(4,0)}](0,4) -- (.5,4.25)[red][thick];
\draw [shift={(4,0)}](1,4) -- (.5,4.25)[red][thick];
\draw [shift={(4,0)}](0,6) -- (.5,5.75)[red][thick];
\draw [shift={(4,0)}](1,6) -- (.5,5.75)[red][thick];
\draw [shift={(4,0)}](2,4) -- (2,6)[red][thick];
\draw [shift={(4,0)}](.5,4.25) -- (.5,5.75)[red][very thick];
\draw [shift={(4,0)}](1.5,0) .. controls (1,3) .. (1.5,6)[black][thick][dashed];

\draw (7,3) node{$+$};

%%%%%

\draw [shift={(8,0)}](0,0) -- (.5,.25)[red][thick];
\draw [shift={(8,0)}](1,0) -- (.5,.25)[red][thick];
\draw [shift={(8,0)}](0,2) -- (.5,1.75)[red][thick];
\draw [shift={(8,0)}](1,2) -- (.5,1.75)[red][thick];
\draw [shift={(8,0)}](2,0) -- (2,2)[red][thick];
\draw [shift={(8,0)}](.5,.25) -- (.5,1.75)[red][very thick];

\draw [shift={(8,0)}](1,2) -- (1.5,2.25)[red][thick];
\draw [shift={(8,0)}](2,2) -- (1.5,2.25)[red][thick];
\draw [shift={(8,0)}](1,4) -- (1.5,3.75)[red][thick];
\draw [shift={(8,0)}](2,4) -- (1.5,3.75)[red][thick];
\draw [shift={(8,0)}](0,2) -- (0,4)[red][thick];
\draw [shift={(8,0)}](1.5,2.25) -- (1.5,3.75)[red][very thick];

\draw [shift={(8,0)}](0,4) -- (.5,4.25)[red][thick];
\draw [shift={(8,0)}](1,4) -- (.5,4.25)[red][thick];
\draw [shift={(8,0)}](0,6) -- (.5,5.75)[red][thick];
\draw [shift={(8,0)}](1,6) -- (.5,5.75)[red][thick];
\draw [shift={(8,0)}](2,4) -- (2,6)[red][thick];
\draw [shift={(8,0)}](.5,4.25) -- (.5,5.75)[red][very thick];
\draw [shift={(8,0)}](1.5,0) .. controls (2.5,3) .. (1.5,6)[black][thick][dashed];

\end{tikzpicture}
\end{equation}

The difference of the maps in ~\eqref{1sttermI} and ~\eqref{2ndtermI} is simply the identity map on the generator ~\eqref{blackinterference2} for $a=0$ thus establishing ~\eqref{relation3.23} for this generator.

We compute the first term on the right hand side of ~\eqref{relation3.23} on the generator ~\eqref{blackinterference2} with $a=1$.

\begin{equation}
\label{1sttermII}
\begin{tikzpicture}
[scale=0.50]

\draw (0,0) -- (.5,.25)[red][thick];
\draw (1,0) -- (.5,.25)[red][thick];
\draw (0,2) -- (.5,1.75)[red][thick];
\draw (1,2) -- (.5,1.75)[red][thick];
\draw (2,0) -- (2,2)[red][thick];
\draw (.5,.25) -- (.5,1.75)[red][very thick];

\draw (1,2) -- (1.5,2.25)[red][thick];
\draw (2,2) -- (1.5,2.25)[red][thick];
\draw (1,4) -- (1.5,3.75)[red][thick];
\draw (2,4) -- (1.5,3.75)[red][thick];
\draw (0,2) -- (0,4)[red][thick];
\draw (1.5,2.25) -- (1.5,3.75)[red][very thick];

\draw (0,4) -- (.5,4.25)[red][thick];
\draw (1,4) -- (.5,4.25)[red][thick];
\draw (0,6) -- (.5,5.75)[red][thick];
\draw (1,6) -- (.5,5.75)[red][thick];
\draw (2,4) -- (2,6)[red][thick];
\draw (.5,4.25) -- (.5,5.75)[red][very thick];
\draw (1.5,0) .. controls (1,3) .. (1.5,6)[black][thick][dashed];

\filldraw [red] (0,3) circle (2pt);

\draw (3,3) node{$\mapsto$};

%%%%%

\draw [shift={(4,0)}](0,0) -- (.5,.25)[red][thick];
\draw  [shift={(4,0)}](1,0) -- (.5,.25)[red][thick];
\draw  [shift={(4,0)}](0,2) -- (.5,1.75)[red][thick];
\draw  [shift={(4,0)}](1,2) -- (.5,1.75)[red][thick];
\draw  [shift={(4,0)}](2,0) -- (2,2)[red][thick];
\draw  [shift={(4,0)}](.5,.25) -- (.5,1.75)[red][very thick];

\draw  [shift={(4,0)}](1,2) -- (1.5,2.25)[red][thick];
\draw  [shift={(4,0)}](2,2) -- (1.5,2.25)[red][thick];
\draw  [shift={(4,0)}](1,4) -- (1.5,3.75)[red][thick];
\draw  [shift={(4,0)}](2,4) -- (1.5,3.75)[red][thick];
\draw  [shift={(4,0)}](0,2) -- (0,4)[red][thick];
\draw  [shift={(4,0)}](1.5,2.25) -- (1.5,3.75)[red][very thick];

\draw  [shift={(4,0)}](0,4) -- (.5,4.25)[red][thick];
\draw  [shift={(4,0)}](1,4) -- (.5,4.25)[red][thick];
\draw  [shift={(4,0)}](0,6) -- (.5,5.75)[red][thick];
\draw  [shift={(4,0)}](1,6) -- (.5,5.75)[red][thick];
\draw  [shift={(4,0)}](2,4) -- (2,6)[red][thick];
\draw  [shift={(4,0)}](.5,4.25) -- (.5,5.75)[red][very thick];
\draw  [shift={(4,0)}](1.5,0) .. controls (1,3) .. (1.5,6)[black][thick][dashed];

\filldraw  [shift={(4,0)}][red] (0,3) circle (2pt);

\draw [shift={(4,0)}](3,3) node{$-$};

%%%%%%

\draw [shift={(8,0)}](0,0) -- (.5,.25)[red][thick];
\draw  [shift={(8,0)}](1,0) -- (.5,.25)[red][thick];
\draw  [shift={(8,0)}](0,2) -- (.5,1.75)[red][thick];
\draw  [shift={(8,0)}](1,2) -- (.5,1.75)[red][thick];
\draw  [shift={(8,0)}](2,0) -- (2,2)[red][thick];
\draw  [shift={(8,0)}](.5,.25) -- (.5,1.75)[red][very thick];

\draw  [shift={(8,0)}](1,2) -- (1.5,2.25)[red][thick];
\draw  [shift={(8,0)}](2,2) -- (1.5,2.25)[red][thick];
\draw  [shift={(8,0)}](1,4) -- (1.5,3.75)[red][thick];
\draw  [shift={(8,0)}](2,4) -- (1.5,3.75)[red][thick];
\draw  [shift={(8,0)}](0,2) -- (0,4)[red][thick];
\draw  [shift={(8,0)}](1.5,2.25) -- (1.5,3.75)[red][very thick];

\draw  [shift={(8,0)}](0,4) -- (.5,4.25)[red][thick];
\draw  [shift={(8,0)}](1,4) -- (.5,4.25)[red][thick];
\draw  [shift={(8,0)}](0,6) -- (.5,5.75)[red][thick];
\draw  [shift={(8,0)}](1,6) -- (.5,5.75)[red][thick];
\draw  [shift={(8,0)}](2,4) -- (2,6)[red][thick];
\draw  [shift={(8,0)}](.5,4.25) -- (.5,5.75)[red][very thick];
\draw  [shift={(8,0)}](1.5,0) .. controls (1,3) .. (1.5,6)[black][thick][dashed];

\filldraw  [shift={(8,0)}][black] (1.5,6) circle (2pt);

\draw [shift={(8,0)}](3,3) node{$+$};

%%%%%%

\draw [shift={(12,0)}](0,0) -- (.5,.25)[red][thick];
\draw  [shift={(12,0)}](1,0) -- (.5,.25)[red][thick];
\draw  [shift={(12,0)}](0,2) -- (.5,1.75)[red][thick];
\draw  [shift={(12,0)}](1,2) -- (.5,1.75)[red][thick];
\draw  [shift={(12,0)}](2,0) -- (2,2)[red][thick];
\draw  [shift={(12,0)}](.5,.25) -- (.5,1.75)[red][very thick];

\draw  [shift={(12,0)}](1,2) -- (1.5,2.25)[red][thick];
\draw  [shift={(12,0)}](2,2) -- (1.5,2.25)[red][thick];
\draw  [shift={(12,0)}](1,4) -- (1.5,3.75)[red][thick];
\draw  [shift={(12,0)}](2,4) -- (1.5,3.75)[red][thick];
\draw  [shift={(12,0)}](0,2) -- (0,4)[red][thick];
\draw  [shift={(12,0)}](1.5,2.25) -- (1.5,3.75)[red][very thick];

\draw  [shift={(12,0)}](0,4) -- (.5,4.25)[red][thick];
\draw  [shift={(12,0)}](1,4) -- (.5,4.25)[red][thick];
\draw  [shift={(12,0)}](0,6) -- (.5,5.75)[red][thick];
\draw  [shift={(12,0)}](1,6) -- (.5,5.75)[red][thick];
\draw  [shift={(12,0)}](2,4) -- (2,6)[red][thick];
\draw  [shift={(12,0)}](.5,4.25) -- (.5,5.75)[red][very thick];
\draw  [shift={(12,0)}](1.5,0) .. controls (2.5,3) .. (1.5,6)[black][thick][dashed];

\filldraw  [shift={(12,0)}][black] (1.5,6) circle (2pt);

\end{tikzpicture}
\end{equation}

Applying four successive maps for the second term in the right hand side of ~\eqref{relation3.23} gives

\begin{equation}
\label{2ndtermII}
\begin{tikzpicture}
[scale=0.50]

\draw (0,0) -- (.5,.25)[red][thick];
\draw (1,0) -- (.5,.25)[red][thick];
\draw (0,2) -- (.5,1.75)[red][thick];
\draw (1,2) -- (.5,1.75)[red][thick];
\draw (2,0) -- (2,2)[red][thick];
\draw (.5,.25) -- (.5,1.75)[red][very thick];

\draw (1,2) -- (1.5,2.25)[red][thick];
\draw (2,2) -- (1.5,2.25)[red][thick];
\draw (1,4) -- (1.5,3.75)[red][thick];
\draw (2,4) -- (1.5,3.75)[red][thick];
\draw (0,2) -- (0,4)[red][thick];
\draw (1.5,2.25) -- (1.5,3.75)[red][very thick];

\draw (0,4) -- (.5,4.25)[red][thick];
\draw (1,4) -- (.5,4.25)[red][thick];
\draw (0,6) -- (.5,5.75)[red][thick];
\draw (1,6) -- (.5,5.75)[red][thick];
\draw (2,4) -- (2,6)[red][thick];
\draw (.5,4.25) -- (.5,5.75)[red][very thick];
\draw (1.5,0) .. controls (1,3) .. (1.5,6)[black][thick][dashed];

\filldraw [red] (0,3) circle (2pt);

\draw (3,3) node{$\mapsto$};

%%%%%

\draw [shift={(4,0)}](3,3) node{$-$};

%%%%%%

\draw [shift={(8,0)}](0,0) -- (.5,.25)[red][thick];
\draw  [shift={(8,0)}](1,0) -- (.5,.25)[red][thick];
\draw  [shift={(8,0)}](0,2) -- (.5,1.75)[red][thick];
\draw  [shift={(8,0)}](1,2) -- (.5,1.75)[red][thick];
\draw  [shift={(8,0)}](2,0) -- (2,2)[red][thick];
\draw  [shift={(8,0)}](.5,.25) -- (.5,1.75)[red][very thick];

\draw  [shift={(8,0)}](1,2) -- (1.5,2.25)[red][thick];
\draw  [shift={(8,0)}](2,2) -- (1.5,2.25)[red][thick];
\draw  [shift={(8,0)}](1,4) -- (1.5,3.75)[red][thick];
\draw  [shift={(8,0)}](2,4) -- (1.5,3.75)[red][thick];
\draw  [shift={(8,0)}](0,2) -- (0,4)[red][thick];
\draw  [shift={(8,0)}](1.5,2.25) -- (1.5,3.75)[red][very thick];

\draw  [shift={(8,0)}](0,4) -- (.5,4.25)[red][thick];
\draw  [shift={(8,0)}](1,4) -- (.5,4.25)[red][thick];
\draw  [shift={(8,0)}](0,6) -- (.5,5.75)[red][thick];
\draw  [shift={(8,0)}](1,6) -- (.5,5.75)[red][thick];
\draw  [shift={(8,0)}](2,4) -- (2,6)[red][thick];
\draw  [shift={(8,0)}](.5,4.25) -- (.5,5.75)[red][very thick];
\draw  [shift={(8,0)}](1.5,0) .. controls (1,3) .. (1.5,6)[black][thick][dashed];

\filldraw  [shift={(8,0)}][black] (1.5,6) circle (2pt);

%\draw [shift={(8,0)}](3,3) node{$+$};

%%%%%%

\draw [shift={(4,0)}](0,0) -- (.5,.25)[red][thick];
\draw  [shift={(4,0)}](1,0) -- (.5,.25)[red][thick];
\draw  [shift={(4,0)}](0,2) -- (.5,1.75)[red][thick];
\draw  [shift={(4,0)}](1,2) -- (.5,1.75)[red][thick];
\draw  [shift={(4,0)}](2,0) -- (2,2)[red][thick];
\draw  [shift={(4,0)}](.5,.25) -- (.5,1.75)[red][very thick];

\draw  [shift={(4,0)}](1,2) -- (1.5,2.25)[red][thick];
\draw  [shift={(4,0)}](2,2) -- (1.5,2.25)[red][thick];
\draw  [shift={(4,0)}](1,4) -- (1.5,3.75)[red][thick];
\draw  [shift={(4,0)}](2,4) -- (1.5,3.75)[red][thick];
\draw  [shift={(4,0)}](0,2) -- (0,4)[red][thick];
\draw  [shift={(4,0)}](1.5,2.25) -- (1.5,3.75)[red][very thick];

\draw  [shift={(4,0)}](0,4) -- (.5,4.25)[red][thick];
\draw  [shift={(4,0)}](1,4) -- (.5,4.25)[red][thick];
\draw  [shift={(4,0)}](0,6) -- (.5,5.75)[red][thick];
\draw  [shift={(4,0)}](1,6) -- (.5,5.75)[red][thick];
\draw  [shift={(4,0)}](2,4) -- (2,6)[red][thick];
\draw  [shift={(4,0)}](.5,4.25) -- (.5,5.75)[red][very thick];
\draw  [shift={(4,0)}](1.5,0) .. controls (2.5,3) .. (1.5,6)[black][thick][dashed];

\filldraw  [shift={(4,0)}][black] (1.5,6) circle (2pt);

\end{tikzpicture}
\end{equation}

The difference of the maps in ~\eqref{1sttermII} and ~\eqref{2ndtermII} is simply the identity map on the generator ~\eqref{blackinterference2} for $a=1$ thus establishing ~\eqref{relation3.23} for this generator.
\end{proof}

We now define the analogue of Rouquier complexes for the rings $W(n,1)$:
\[
\xymatrix{
\sigma_i := & 0 \ar[r] & W(n,1) \ar[r]^{\iota_i} & W_i \langle -1 \rangle \ar[r] & 0
}
\]
\[
\xymatrix{
\sigma_i^{-1} := & 0 \ar[r] & W_i \langle 1 \rangle \ar[r]^{\epsilon_i \hspace{.1in}} & W(n,1) \ar[r] & 0
}
\]
where in both complexes the object $W_i$ is in cohomological degree zero.

\begin{corollary}
\label{braidcorollarysigma}
As endofunctors of the homotopy category $Kom(W(n,1)\gpmod)$, the complexes $\sigma_i $ and $\sigma_i^{-1}$ satisfy the following isomorphisms
\begin{enumerate}
\item $ \sigma_i \circ \sigma_i^{-1} \cong \Id \cong \sigma_i^{-1} \circ \sigma_i $
\item $ \sigma_i \circ \sigma_j \cong \sigma_j \circ \sigma_i$ if $|i-j|>1$
\item $ \sigma_i \circ \sigma_{i+1} \circ \sigma_i \cong \sigma_{i+1} \circ \sigma_i \circ \sigma_{i+1}$.
\end{enumerate}
Furthermore there is a functor from the category of braid cobordisms to the category above.
\end{corollary}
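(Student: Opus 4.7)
The plan is to deduce the corollary by transporting the classical Rouquier-complex construction from the diagrammatic Hecke category $\mathcal{H}$ via the functor $F \colon \mathcal{H} \rightarrow \mathcal{W}$ produced in Theorem~\ref{heckeactiontheorem}. Concretely, in $\mathcal{H}$ one has ``abstract'' Rouquier complexes built from the object $(i)$ and the unit/counit morphisms (the upward and downward lollipops), and by the main result of \cite{EK} these complexes satisfy the braid relations up to homotopy equivalence in $Kom(\mathcal{H})$. Moreover, \cite{EK} upgrades this to a functor from the category of braid cobordisms into $Kom(\mathcal{H})$. Since $F$ is a monoidal functor that sends the generator $(i)$ to $W_i$, the lollipop to $\epsilon_i$, and the upside-down lollipop to $\iota_i$, the complexes $F(\sigma_i^{\mathcal{H}})$ and $F(\sigma_i^{-1,\mathcal{H}})$ are precisely $\sigma_i$ and $\sigma_i^{-1}$ as defined before the corollary.

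First I would verify (or simply record) that applying the additive monoidal functor $F$ to a homotopy equivalence of complexes of objects in $\mathcal{H}$ yields a homotopy equivalence of the corresponding complexes of $W(n,1)$-bimodules; this is immediate because $F$ preserves the relevant maps and homotopies by functoriality. Thus each braid identity in $Kom(\mathcal{H})$ pushes forward to a braid identity in $Kom(\mathcal{W})$ for the tensor-product complexes $F(\sigma_i)$, $F(\sigma_i^{-1})$, and their compositions. Second, I would pass from bimodule complexes to endofunctors of $Kom(W(n,1)\gpmod)$ by tensoring on the left with these complexes: a homotopy equivalence of complexes of $(W,W)$-bimodules induces a natural isomorphism of the associated tensor-product endofunctors on the homotopy category of projective modules, because each $W_i$ is projective as a right $W$-module (hence $-\otimes_W W_i$ is exact on $\gpmod$ and preserves homotopies).

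Third, the three isomorphisms in the corollary correspond to the three braid-type identities verified in $Kom(\mathcal{H})$: the cancellation $\sigma_i \sigma_i^{-1} \cong \Id \cong \sigma_i^{-1} \sigma_i$ (which uses only the zigzag identities between $\iota_i$ and $\epsilon_i$, encoded in relations~\eqref{relation3.10}--\eqref{relation3.13}); the distant commutativity for $|i-j|>1$ (which uses relations~\eqref{relation3.29}--\eqref{relation3.31}); and the braid relation $\sigma_i \sigma_{i+1} \sigma_i \cong \sigma_{i+1} \sigma_i \sigma_{i+1}$ (which uses the $6$-valent-vertex relations~\eqref{relation3.23}--\eqref{relation3.27}). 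All of these relations hold in $\mathcal{H}$ by construction, and Theorem~\ref{heckeactiontheorem} guarantees that $F$ respects them. For the final sentence about braid cobordisms, I would invoke the main theorem of \cite{EK} identifying $Kom(\mathcal{H})$ with the target of a functor from the braid cobordism category, and then compose with the induced functor $Kom(F)$ into $Kom(\mathcal{W})$, and finally with the tensor-product action on $Kom(W(n,1)\gpmod)$.

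The only nontrivial step is verifying that $F$ preserves the specific chain-level data defining the Rouquier complexes, i.e.\ that $F(\iota_i^{\mathcal{H}}) = \iota_i$ and $F(\epsilon_i^{\mathcal{H}}) = \epsilon_i$ with the correct grading shifts; this is immediate from the definition of $F$ on the lollipop generators. The main potential obstacle is the bookkeeping of signs and grading shifts in transporting the braid homotopy equivalences, especially in the $\sigma_i \sigma_{i+1} \sigma_i$ identity, where the intermediate complex has eight terms and the homotopy equivalence is the only nontrivial piece; but since this equivalence already lives in $\mathcal{H}$ (built from the trivalent, $6$-valent, and polynomial generators whose images under $F$ are all defined), no fresh calculation in the redotted Webster setting is required.
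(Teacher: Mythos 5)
Your proposal is correct and follows essentially the same route as the paper: the paper's proof is the one-liner ``Theorem~\ref{heckeactiontheorem} and \cite[Theorem 1]{EKr} imply the corollary,'' and you have simply unpacked what that entails. Two small points to tidy up. First, the reference you want for the braid-cobordism functor and for the homotopy-equivalence braid relations is Elias--Krasner \cite{EKr} (``Rouquier complexes are functorial over braid cobordisms''), not Elias--Khovanov \cite{EK}; the latter constructs the diagrammatic Hecke category but the braid-cobordism functoriality that the corollary actually invokes is \cite[Theorem 1]{EKr}. Second, the parenthetical justification for why tensoring preserves $Kom(W(n,1)\gpmod)$ has a handedness slip: for the endofunctor $W_i \otimes_W -$ acting on \emph{left} modules you need $W_i$ to be finitely generated projective as a \emph{left} $W$-module (dually, right-projectivity if $\gpmod$ denotes right modules and the action is $- \otimes_W W_i$); and separately, the fact that tensoring with a fixed bimodule preserves homotopy equivalences is automatic from additivity and requires no projectivity at all, so it is best to keep these two observations apart.
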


\begin{proof}
Theorem ~\ref{heckeactiontheorem} and ~\cite[Theorem 1]{EKr} imply the corollary.
\end{proof}

\begin{remark}
The proof of Theorem ~\ref{heckeactiontheorem} did not depend upon the cyclotomic relation in $W(n,1)$ and so the corresponding statement is also true for $\widetilde{W}(n,1)$.
\end{remark}

\subsection{Braid group action on ${}^{\emptyset} \mathcal{R}^{\hat{1}}$}
There is a braid group action on the homotopy category of singular Soergel bimodules ${}^{\emptyset} \mathcal{R}^{\hat{k}}$
coming from the normal braid group action on the homotopy category of $\mathcal{R}$.  

Define functors
\begin{equation*}
T_i \colon Kom({}^{\emptyset} \mathcal{R}^{\hat{1}}) \rightarrow Kom({}^{\emptyset} \mathcal{R}^{\hat{1}}) \hspace{.5in} T_i(M) = (B_i \langle 1 \rangle \longrightarrow R) \otimes_R M
\end{equation*}
where the map $B_i \rightarrow R$ is just multiplication.

Define functors
\begin{equation*}
T_i^{-1} \colon Kom({}^{\emptyset} \mathcal{R}^{\hat{1}}) \rightarrow Kom({}^{\emptyset} \mathcal{R}^{\hat{1}}) \hspace{.5in} T_i^{-1}(M) = (R  \longrightarrow B_i \langle -1 \rangle) \otimes_R M
\end{equation*}
where the map $R \rightarrow B_i \langle -1 \rangle$ is determined by $ 1 \mapsto x_i \otimes 1 -1 \otimes x_{i+1}$.

The following theorem is an immediate consequence of ~\cite[Theorem 3.5]{Ro2}.
\begin{theorem}
\label{braidtheoremregappliedtosing}
The functors $T_i$ for $i=1,\ldots,n-1$ satisfy braid group relations
\begin{enumerate}
\item $T_i \circ T_i^{-1} \cong \Id \cong T_i^{-1} \circ T_i$,
\item $T_i \circ T_j \cong T_j \circ T_i$ if $|i-j|>1$,
\item $T_i \circ T_{i+1} \circ T_i \cong T_{i+1} \circ T_i \circ T_{i+1}$.
\end{enumerate}
The functors $T_i, T_i^{-1}$ descend to an action on $K_0(Kom({}^{\emptyset} \mathcal{R}^{\hat{1}}))$ endowing the Grothendieck group with the structure of the Burau representation of the braid group.
\end{theorem}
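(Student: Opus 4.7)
The plan is to deduce the braid relations directly from Rouquier's theorem \cite{Ro2} for regular Soergel bimodules by observing that the Rouquier complexes
\[
F_i = (B_i \langle 1 \rangle \to R), \qquad F_i^{-1} = (R \to B_i \langle -1 \rangle)
\]
live in $Kom((R,R)\-gmod)$, and that $T_i^{\pm 1}$ is precisely the functor $F_i^{\pm 1} \otimes_R -$ with the output regarded as an $(R,R^{\hat{1}})$-bimodule by restricting the right $R$-action along the inclusion $R^{\hat{1}} \hookrightarrow R$.

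First I would verify that $T_i$ maps $Kom({}^{\emptyset}\mathcal{R}^{\hat{1}})$ to itself. If $M$ is a direct summand (with shift) of a Bott--Samelson bimodule $R \otimes_{R^{I_1}} \cdots \otimes_{R^{\hat{1}}} R$, then $B_i \otimes_R M$ is a bimodule of the same Bott--Samelson type with an extra $R^{\{i\}}$ factor grafted on the left, and hence lies in ${}^{\emptyset}\mathcal{R}^{\hat{1}}$. Therefore $F_i \otimes_R M$ is a length-two complex in this category. Next I would transport the braid isomorphisms: Rouquier's theorem supplies homotopy equivalences $F_i \otimes_R F_i^{-1} \simeq R$, $F_i \otimes_R F_j \simeq F_j \otimes_R F_i$ for $|i-j|>1$, and the three-term braid relation, all inside $Kom((R,R)\-gmod)$. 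Since each bimodule $B_j$ is free (more precisely, flat) as a right $R$-module, the functor $- \otimes_R M$ is exact and preserves these homotopy equivalences. This yields relations (1)--(3) immediately.

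For the Grothendieck group assertion, I would identify $K_0(Kom({}^{\emptyset}\mathcal{R}^{\hat{1}}))$ with a weight-space module for the Hecke algebra $H_n$ via Williamson's categorification theorem, with basis given by the classes $[P_i]$ of Section \ref{sectionsingsoerg}. The class $[F_i] = q[B_i] - [R]$ in the Hecke algebra is the standard braid generator, so the induced linear action on $K_0$ is the restriction of the Hecke action to this weight space. Under the usual identification, this weight space is a tensor product of standard representations of quantum $\mathfrak{sl}_2$ at a single weight, and the braid action recovers the Burau representation after specialization of the Hecke parameter. The explicit matrix entries on $\{[P_j]\}$ may be read off from the short exact sequence in Proposition \ref{standardfiltBi} together with the standard filtration in Corollary \ref{standfiltration}.

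The main obstacle is conceptual rather than technical: once one recognizes that Rouquier's complexes live in the regular bimodule world and act on the singular category via restriction of scalars, all three braid relations transfer automatically. The only point requiring genuine attention is the exactness of $- \otimes_R M$, which requires that the bimodules appearing in Rouquier's equivalences be right-$R$-flat; this is standard for the Bott--Samelson bimodules involved, and it is precisely what is meant by calling the theorem an ``immediate consequence'' of Rouquier's.
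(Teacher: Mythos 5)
Your approach is exactly that of the paper, which simply cites Rouquier's theorem as immediately implying the result: you observe that $T_i^{\pm 1}$ is restriction of scalars applied to Rouquier's functor $F_i^{\pm 1}\otimes_R -$, check that the singular Soergel category is preserved, and transport the homotopy equivalences. One small correction: the appeal to right-$R$-flatness of the Bott--Samelson bimodules is unnecessary here --- the functor $-\otimes_R M$ is additive, and any additive functor sends chain homotopies to chain homotopies, hence homotopy equivalences to homotopy equivalences, with no exactness hypothesis required; flatness would matter only if one wanted to preserve quasi-isomorphisms rather than homotopy equivalences. With that remark the argument is correct and matches the paper's intent.
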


\vspace{.25in}

\section{References} 

%%%%%%%%%%%%%%%%%%%%%%%%%%%%%%%%%%%%%

\def\refname{}

\end{document}